\newtheorem{lemma}{Lemma}[section]
\newtheorem{proposition}{Proposition}[section]
\newtheorem{theorem}{Theorem}[section]
\newtheorem{corollary}{Corollary}[section]
\newtheorem{definition}{Definition}[section]
\newtheorem{example}{Example}[section]
\newtheorem{remark}{Remark}[section]
\newtheorem{assumption}{Assumption}[section]
\def\section{\@startsection{section}{1}%
\z@{1\linespacing\@plus\linespacing}{1\linespacing}%
{\bf\centering}}
\def\subsection{\@startsection{subsection}{0}%
\z@{\linespacing\@plus\linespacing}{\linespacing}%
{\bf}}
\DeclareMathOperator{\supp}{supp}
\DeclareMathOperator{\Spec}{Spec}
\DeclareMathOperator{\dist}{dist}
\DeclareMathOperator{\card}{card}
\DeclareMathOperator{\loc}{loc}
\DeclareMathOperator{\Ai}{{\rm Ai}}
\providecommand{\pro}[1]{(#1_t)_{t \geq 0}}
\providecommand{\proo}[1]{(#1_t)_{t \in \R}}
\newcommand{\cQ}{\mathcal{Q}}
\newcommand{\cG}{\mathcal{G}}
\newcommand{\cA}{\mathcal{A}}
\newcommand{\cL}{\mathcal{L}}
\newcommand{\cK}{\mathcal{K}}
\newcommand{\cF}{\mathcal{F}}
\newcommand{\cB}{\mathcal{B}}
\newcommand{\cT}{\mathcal{T}}
\newcommand{\cM}{\mathcal{M}}
\newcommand{\R}{\mathbf{R}}
\newcommand{\1}{\mathbf{1}}
\newcommand{\pr}{\mathbf{P}}
\newcommand{\qpr}{\mathbf{Q}}
\newcommand{\ex}{\mathbf{E}}
\newcommand{\Rd}{\mathbf{R}^d}
\newcommand{\N}{\mathbf{N}}
\begin{document}
\title[]
{Fractional $P(\phi)_1$-processes and Gibbs measures}
\author{Kamil Kaleta {\tiny{and}} J\'ozsef L{\H o}rinczi}
\address{Kamil Kaleta, Institute of Mathematics and Computer Science \\ Wroc{\l}aw University of Technology
\\ Wyb. Wyspia{\'n}skiego 27, 50-370 Wroc{\l}aw, Poland}
\email{kamil.kaleta@pwr.wroc.pl}

\address{J\'ozsef L\H orinczi,
School of Mathematics, Loughborough University \\
Loughborough LE11 3TU, United Kingdom}
\email{J.Lorinczi@lboro.ac.uk}

\begin{abstract}
We define and prove existence of fractional $P(\phi)_1$-processes as random processes generated by fractional
Schr\"odinger semigroups with Kato-decomposable potentials. Also, we show that the measure of such a process
is a Gibbs measure with respect to the same potential. We give conditions of its uniqueness and characterize
its support relating this with intrinsic ultracontractivity properties of the semigroup and the fall-off of
the ground state. To achieve that we establish and analyze these properties first.

\bigskip
\noindent
\emph{Key-words}: symmetric stable process, fractional Schr\"odinger operator, intrinsic ultracontractivity,
decay of ground state, Gibbs measure
\end{abstract}

\thanks{The first named author was supported by the Polish Ministry of Science and Higher Education grant N N201
527338.}

\maketitle

\baselineskip 0.5 cm

\bigskip\bigskip
\section{Introduction}
The Feynman-Kac formula was originally derived to obtain a representation of the solutions of the
Schr\"odinger equation by running a Brownian motion subject to the given potential and averaging
over the paths. This probabilistic method proved to be a powerful alternative to the direct operator
analysis in studying the properties of the eigenfunctions of Schr\"odinger operators. Feynman-Kac-type
formulae were subsequently extended to cover further PDE and also other models of quantum theory by
adding extra operator terms (see a systematic discussion in \cite{bib:LHB}). Due to the presence of 
the Laplace operator, however, random processes with continuous paths remained a key object in these
functional integral representations.

In the recent paper \cite{bib:HIL} generalized Schr\"odinger operators of the form
\begin{equation}
H = \Psi(-\Delta) + V
\end{equation}
have been introduced, where $\Psi$ is a so called Bernstein function. An example to this class are
the fractional Schr\"odinger operators
\begin{equation}
H_\alpha =(-\Delta)^{\alpha/2} + V, \quad 0 < \alpha < 2.
\end{equation}
These operators are non-local and have markedly different properties from usual Schr\"odinger operators
(obtained for $\Psi(x) = x$). Due to the fact that Bernstein functions with vanishing right limits at
the origin are in a one-to-one correspondence with subordinators, the operators $\Psi(-\Delta)$ generate
subordinate Brownian motion. These are L\'evy processes with c\`adl\`ag paths (i.e., right continuous
paths with left limits) having jump discontinuities. In particular, the fractional Laplacian generates
a symmetric $\alpha$-stable process $\pro X$, and for fractional Schr\"odinger operators a
Feynman-Kac-type formula of the form
\begin{equation}
\label{fk}
\left(e^{-tH_\alpha}f\right)(x) = \ex^x\left[e^{-\int_0^t V(X_s)ds}f(X_t)\right] =: (T_tf)(x), \quad t > 0,
\end{equation}
holds, where the expectation is taken with respect to the measure of this process.

The main goal of this paper is to obtain a description of symmetric $\alpha$-stable processes under the
potential $V$. The Feynman-Kac semigroup $\{T_t: t\geq 0\}$ has the particularity that in general
$T_t\1_{\R^d}(x) \neq 1$. Suppose $V$ is chosen so that there exist $\lambda_0 = \inf \Spec H_\alpha$ and
$\varphi_0 \in L^2(\R^d, dx)$ such that $H_\alpha \varphi_0 = \lambda_0\varphi_0$. Then the intrinsic fractional
Feynman-Kac semigroup generated by the operator $\widetilde H_\alpha f := \frac{1}{\varphi_0} (H_\alpha - \lambda_0)(\varphi_0f)$
is a Markov semigroup and allows a probabilistic interpretation. By treating the exponential factor in (\ref{fk})
as a density with respect to the measure of this semigroup we show that there exists a probability measure $\mu$
and a random process $(\widetilde X_t)_{t\in \R}$ on the space $(D_{\rm r}(\R,\R^d),{\cB}(D_{\rm r}(\R,\R^d))$ of
two-sided c\`adl\`ag paths such that
\begin{equation}
(e^{-t \widetilde{H}_{\alpha}}f)(x) = \ex^x_\mu [f(\widetilde X_t)], \quad t \geq 0.
\end{equation}
We call the Markov process $(\widetilde X_t)_{t \in \R}$ \emph{fractional $P(\phi)_1$-process} for $V$
(Theorem \ref{th:exphi1} below). Note that in order to define this process we need neither positivity nor
boundedness of the potential $V$. We will introduce and use the class of fractional Kato-decomposable
potentials, which allows local singularities, and we will assume that $V$ is such that a ground state
$\varphi_0$ exists. The almost sure behaviour of the measure of this process is established in Theorem
\ref{th:subspaceom}.

Next we show that the stationary measure of a fractional $P(\phi)_1$-process is a Gibbs measure for $V$ on
the paths of this process (Theorem \ref{exist}). We prove that this Gibbs measure is uniquely supported on
the full path space when the fractional Feynman-Kac semigroup is intrinsically ultracontractive (IUC) for
at least large enough times (Theorem \ref{th:iugibbs}). This justifies to introduce the concept of asymptotic
intrinsic ultracontractivity (AIUC), which turns out to be a weaker property than IUC. We characterize AIUC
and IUC for fractional Kato-decomposable potentials (Theorem \ref{th:charIUC}), establish necessary and
sufficient conditions (Theorems \ref{th:nec} and \ref{th:suff}), and show that the borderline case is given,
roughly, by potentials growing faster than logarithmically (Corollary \ref{border}). This contrasts the case
of Schr\"odinger semigroups and diffusions where the classic result \cite{bib:DS} shows that IUC is obtained
for potentials growing at infinity faster than quadratically, and we give a heuristic explanation why is it
``easier" for a fractional $P(\phi)_1$-process to be IUC than for diffusions and what determines the borderline
cases (Remark \ref{whyeasier}). For potentials that are not pinning strongly enough to allow IUC we identify a
full measure subset of c\`adl\`ag paths on which the Gibbs measure is unique (Theorem \ref{th:subspace}). This
subset of paths will be seen to relate with the decay properties of the ground state at infinity. Therefore we
need to derive pointwise lower and upper bounds of the ground states (Theorem \ref{th:eig} and corollaries),
which will also be used to establish (A)IUC for the class of potentials we use.


We note that using these results, one of the applications we are interested in is to add further operators and
study ground state properties of Hamiltonians describing (semi)relativistic quantum field and other models
extending the results of \cite{bib:BHLMS,bib:HIL,bib:HL,bib:LHB}.) This will be discussed elsewhere.

The paper is organized as follows. Section 2 contains essential preparatory material. We introduce two-sided
symmetric $\alpha$-stable processes, recall a minimum of basic definitions and facts on the potential theory
of stable processes and bridges, and derive some results on potential theory for fractional Schr\"odinger
operators with Kato-decomposable potentials. In Section 3 we derive ground state estimates for Kato-decomposable
potentials for which the Feynman-Kac semigroup is compact. Section 4 is devoted to discussing ultracontractivity
properties. In Section 5 we finally prove existence and properties of fractional $P(\phi)_1$-processes. Also, we
construct Gibbs measures on the paths of these processes, and establish their uniqueness and support properties.

\section{Preliminaries}
\subsection{Two-sided symmetric $\alpha$-stable processes}

Let $\pro X$ be an $\Rd$-valued rotationally invariant
$\alpha$-stable process with $d \geq 1$ and $\alpha \in (0,2)$. In this paper we are interested in the
case of non-Gaussian stable processes only, therefore do not include the case $\alpha=2$. We use the
notations $\pr^x$ and $\ex^x$, respectively, for the distribution and the expected value of the process
starting in $x \in \Rd$ at time $t=0$; for simplicity we do not indicate the measure in subscript
(while we do when have any other measure or process). The characteristic function of $\pro X$ is
\begin{equation}
\ex^0[e^{i \xi \cdot X_t}] = e^{-t|\xi|^\alpha}, \quad \xi \in \Rd, \, t \geq 0.
\label{charstable}
\end{equation}
Denote $[0,\infty)=\R^+$. As a L\'evy process, $\pro X$ has a version with paths in $D_{\rm r}(\R^+;\R^d)$,
i.e., the space of right continuous functions $\R^+ \to \R^d$ with left limits (c\`adl\`ag functions)
and in $D_{\rm l}(\R^+; \R^d)$, i.e., the space of left continuous functions $\R^+ \to \R^d$ with right
limits (c\`agl\`ad functions).

The transition density $p(t,x)$ of the process $(X_t)_{t\geq0}$ is a smooth real-valued function on $\R^d$
determined by
$$
\int_{\R^d} p(t,z) e^{i z \xi} dz = e^{-t |\xi|^{\alpha}}, \quad  \xi \in \Rd, \; t > 0,
$$
and $\pr^x(X_t \in A) = \int_A p(t,y-x)dy$ holds for every Borel set $A \subset \R^d$. For every fixed
$t>0$ the density $p(t,x)$ is strictly positive, continuous and bounded on $\R^d$ with the bounds
\begin{align}
\label{eq:weaksc}
C^{-1}\left(\frac{t}{|x|^{d+\alpha}}\wedge t^{-d/\alpha}\right) & \leq p(t,x)
\leq C \left(\frac{t}{|x|^{d+\alpha}}\wedge t^{-d/\alpha} \right).
\end{align}

\noindent
Also, for every $\alpha \in (0,2)$ the scaling property $p(t,x) = t^{-d/\alpha} p(1,t^{-1/\alpha}x)$,
$x \in \R^d$, $t > 0$ holds. 

The L\'evy measure of the process $(X_t,\pr^x)_{t\geq 0}$ is given by
$$
\nu(dx) = \cA_{d,-\alpha} |x|^{-d-\alpha}dx,
$$
where $\cA_{d,\gamma}=2^{-\gamma} \pi^{-d/2} \Gamma((d-\gamma)/2)|\Gamma(\gamma/2)|^{-1}$. For the remainder
of the paper we will simply write $\cA$ instead of $\cA_{d,-\alpha}$.

It is known that when $\alpha < d$, the process $\pro X$ is transient with potential kernel
\cite{bib:BlG}
$$
{\Pi}_\alpha(y-x) = \int_0^\infty p(t,y-x)dt = \cA_{d,\alpha} |y-x|^{\alpha-d}, \quad x,y \in \R^d.
$$
Whenever $\alpha \geq d$ the process is recurrent (pointwise recurrent when $\alpha > d =1$). In this case
we can consider the compensated kernel \cite{bib:BlGR}, that is, for $\alpha \geq d$ we put
$$
{\Pi}_\alpha(y-x) = \int_0^\infty \left( p(t,y-x) - p(t,x_0)\right)dt,
$$
where $x_0 = 0$ for $\alpha > d = 1$, and $x_0 = 1$ for $\alpha = d = 1$. In this case
$$
{\Pi}_\alpha (x) = \frac{1}{\pi} \log\frac{1}{|x|}
$$
for $\alpha = d = 1$ and
$$
{\Pi}_\alpha (x) = (2\Gamma(\alpha) \cos(\pi\alpha/2))^{-1} |x|^{\alpha-1}, \ \ \ \ x \in \R^d
$$
for $\alpha > d = 1$. For further information on the potential theory of stable processes we refer to
\cite{bib:CS1, bib:BBKRSV}.

Below we consider stable processes $\pro X$ extended over the time-line $\R$ instead of defining them
only on the semi-axis $\R^+$ as usual. Consider the measurable space $({\Omega}, {\cB}({\Omega}))$, with
${\Omega}=D_{\rm r}(\R;\R^d)$, as well as $\widehat \Omega = D_{\rm r}(\R^{+}, \R^d) \times
D_{\rm l}(\R^{+}, \R^d)$ and $\widehat \pr^x = \pr^x\times\pr^x$. Let $\omega=(\omega_1,\omega_2)\in
\widehat \Omega$ and define
$$
\widehat X_t(\omega)=
\left\{
\begin{array}{ll}\omega_1(t), & t\geq0,\\
\omega_2(-t),                 & t<0.
\end{array} \right.
$$
Since $\widehat X_t(\omega)$ is c\`adl\`ag in $t \in \R$ under $\widehat \pr^x$,
$X: (\widehat \Omega,{\cB}(\widehat \Omega)) \to (\Omega, {\cB}(\Omega))$ can be defined by $X_t(\omega)=
\widehat X_t(\omega)$. It is seen that $X \in {\cB}(\widehat \Omega)/{\cB}({\Omega})$ by showing that
$X^{-1}(E)\in {\cB}(\widehat \Omega)$, for any cylinder sets $E\in {\cB}({\Omega})$. Thus $X$ is an
$\Omega$-valued random variable on $\widehat \Omega$. Denote again the image measure of $\widehat \pr^x$
on $({\Omega},{\cB}({\Omega}))$ with respect to $X$ by
$$
\pr^x = \widehat \pr^x \circ X^{-1}.
$$
The coordinate process denoted by the same symbol
\begin{equation}
\label{furusato}
X_t:\omega\in {\Omega} \mapsto \omega(t)\in \R^d
\end{equation}
is an \emph{$\alpha$-stable process over $\R$} on $({\Omega}, {\cB}({\Omega}), \pr^x)$, which we denote by
$(X_t, \pr^x)_{t \in \R}$. The properties of the so obtained process can be summarized as follows.
\begin{proposition}
The following hold:
\begin{itemize}
\item[(1)]
$\pr^x(X_0=x)=1$
\item[(2)]
the increments $(X_{t_i}-X_{t_{i-1}})_{1\leq i\leq n}$ are independent symmetric $\alpha$-stable random
variables for any $0=t_0<t_1<\cdots<t_n$ with $X_t-X_s\stackrel{\rm d}{=}X_{t-s}$ for
$t>s$
\item[(3)]
the increments $(X_{-t_{i-1}}-X_{-t_i})_{1\leq i\leq n}$ are independent symmetric $\alpha$-stable random
variables for any $0=-t_0>-t_1>\cdots>-t_n$ with $X_{-t}- X_{-s}\stackrel{\rm d}{=}X_{s-t}$
for $-t>-s$
\item[(4)]
the function $\R \ni t\mapsto X_t(\omega)\in \R$ is c\`adl\`ag for every $\omega$
\item[(5)]
$X_t$ and $X_s$ for $t>0$ and $s<0$ are independent.
\end{itemize}
\end{proposition}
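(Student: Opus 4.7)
The plan is to read off each of the five claims from the construction $\widehat\Omega = D_{\rm r}(\R^+, \R^d) \times D_{\rm l}(\R^+, \R^d)$ with $\widehat\pr^x = \pr^x \times \pr^x$ and $\widehat X_t(\omega_1,\omega_2) = \omega_1(t)$ for $t \geq 0$ and $\omega_2(-t)$ for $t < 0$, using only (a) the product structure giving independence of the two coordinates and (b) the already-known properties of the one-sided $\alpha$-stable process. Since $\pr^x$ is the pushforward of $\widehat\pr^x$ under the measurable map $X: \widehat\Omega \to \Omega$ already described in the text, each assertion is verified first on $\widehat\Omega$ and then transferred.

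Claim (1) is immediate, because under $\pr^x$ the first coordinate satisfies $\omega_1(0) = x$ almost surely. For (4), the path $t \mapsto \widehat X_t$ inherits right-continuity and existence of left limits on $[0,\infty)$ directly from $\omega_1 \in D_{\rm r}$; on $(-\infty, 0)$ the change of variable $s = -t$ converts left-continuity of $\omega_2$ into right-continuity of $\widehat X$, and right limits of $\omega_2$ into left limits of $\widehat X$, so $\omega_2 \in D_{\rm l}$ yields the c\`adl\`ag property there. The one spot deserving care is the junction $t = 0$: right-continuity is supplied by $\omega_1 \in D_{\rm r}$, while the left limit at $0$ equals the right limit of $\omega_2$ at $0^+$, which exists because $\omega_2 \in D_{\rm l}$.

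Property (2) is then a direct restatement of the forward L\'evy/stationary-increments property of the one-sided stable process $(\omega_1(t))_{t \geq 0}$. For (3), given $0 = t_0 < t_1 < \cdots < t_n$ I would write $\widehat X_{-t_{i-1}} - \widehat X_{-t_i} = -(\omega_2(t_i) - \omega_2(t_{i-1}))$; these differences are independent and $\alpha$-stable by the forward L\'evy property of $\omega_2$, and symmetry of the $\alpha$-stable law absorbs the minus sign, so in particular $X_{-t} - X_{-s} \stackrel{\rm d}{=} X_{s-t}$ for $-t > -s$. Claim (5) is read off from $\widehat X_t = \omega_1(t)$ for $t > 0$ and $\widehat X_s = \omega_2(-s)$ for $s < 0$, combined with independence of $\omega_1$ and $\omega_2$ under $\widehat\pr^x = \pr^x \times \pr^x$.

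I do not foresee a substantive obstacle: each item reduces either to the product structure of $\widehat\pr^x$ or to a standard property of the one-sided stable process. The only mildly delicate point is the matching of continuity conventions at $t = 0$ when gluing a c\`adl\`ag piece to a time-reversed c\`agl\`ad piece, which has been sketched above.
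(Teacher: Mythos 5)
Your argument is correct and is exactly the one the paper has in mind: the proposition is stated without a separate proof precisely because each item follows directly from the product construction $\widehat\pr^x = \pr^x \times \pr^x$ on $\widehat\Omega = D_{\rm r}(\R^+,\R^d)\times D_{\rm l}(\R^+,\R^d)$ together with the known Lévy properties of the one-sided $\alpha$-stable process, as you lay out. Your treatment of the time-reversed piece in (3) and (4) — converting left-/right-continuity under $s=-t$ and using symmetry of the stable law to absorb the sign — is the right way to handle the only points of any delicacy.
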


It can be checked directly through the finite dimensional distributions that the joint distribution of
$X_{t_0},\ldots,  X_{t_n}$, $-\infty<t_0<t_1<\ldots<t_n<\infty$ with respect to $dx \otimes d \pr^x$ is
invariant with respect to time shift, i.e.,
$$
\int_{\R^d} dx \ex^{x} \left[\prod_{i=0}^n f_i(X_{t_i})\right] =
\int_{\R^d} dx \ex^{x} \left[\prod_{i=0}^n f_i(X_{t_i+s})\right]
$$
for all $s \in\R$. Moreover, the left hand side above can be expressed in terms of $(X_t, \pr^x)_{t \geq 0}$
as
$$
\int_{\R^d} dx \ex^x \left[\prod_{i=0}^n f_i(X_{t_i})\right] =
\int_{\R^d} dx \ex^x \left[\prod_{i=0}^n f_i(X_{t_i-t_0})\right].
$$

We also will need to consider the process $(X_t)_{t \geq s}$ starting at an arbitrary time $s \in \R$. For
$s,t \in \R$ and $x,y \in \R^d$ we denote its transition density by
\begin{equation*}
p(s,x,t,y) =
\begin{cases}
p(t-s,y-x) & \text{for} \quad s<t \\
0		& \text{for} \quad s \geq t. \\		
\end{cases}
\end{equation*}
By $\pr^{x,s}$ and $\ex^{x,s}$ we respectively denote the distribution and expectation of the process $(X_t)_{t \geq s}$
starting at the point $x \in \R^d$ at time $s \in \R$. We have
$$
\pr^{x,s}(X_t \in A) = \int_A p(s,x,t,y)dy,
$$
where by $\pro X$ we mean the canonical right continuous coordinate process evaluated at time $t >s$, and
$A \in \R^d$ is a Borel set. When $s = 0 $, we simply write $\pr^x$ and $\ex^x$ as before. The following time
translation and scaling properties hold:
$$
(X_t, \pr^{x,s}) \stackrel{\rm d}{=} (X_{t-s},\pr^x), \qquad
(X_t,\pr^{x,s}) \stackrel{\rm d}{=} (rX_{r^{-\alpha}t}, \pr^{xr^{-1},sr^{-\alpha}}), \quad r>0 .
$$

\subsection{Stable bridges}

Let $I \subset \R$ be an interval, and denote by $\Omega_I = D_{\rm r}(I,\R^d)$ the space of c\`adl\`ag
functions from $I$ to $\R^d$. We denote by $\cF_I$ the $\sigma$-field generated by the coordinate process
$\omega(t)$, $\omega \in \Omega_I$, $t \in I$.

For $x,y \in \R^d$ and $s, t \in \R$, $s < t$, we respectively denote by $\pr^{x,s}_{y,t}$ and
$\ex^{x,s}_{y,t}$ the distribution and expectation of the symmetric $\alpha$-stable bridge
$(X_r)_{s \leq r \leq t}$ starting in $x \in \R^d$ at time $s \in \R$ given by $X_t = y$ (see
\cite[Th.1, Th.5]{bib:ChB}, also \cite{bib:FPY}, \cite[Sect. VIII.3]{bib:Ber}). In
fact, $(\pr^{x,s}_{y,t})_{y \in \R^d}$ is a regular version of the family of conditional probability
distributions $\pr^{x,s}(\:\cdot\:| X_t = y)$, $y \in \R^d$, that is, if $Y \geq 0$ is
$\cF_{[s,t]}$-measurable and $g\geq0$ is a Borel function on $\Rd$, then \cite[(2.8)] {bib:FPY}
\begin{align}
\label{eq:cnd}
\ex^{x,s}[Y g(X_t)] = \int_{\Rd} \ex^{x,s}_{y,t}[Y]g(y)p(t-s,y-x)dy.
\end{align}
Clearly, $\pr^{x,s}_{y,t}(X_s = x, X_t = y)=1$.

For $x,y \in \R^d$ and $s, t \in \R$, $s < t$, we denote by $\nu^{x,y}_{[s,t]}$ the non-normalized measure
on $(\Omega_{\left[s,t\right]}, \cF_{\left[s,t\right]})$ corresponding to the symmetric $\alpha$-stable
bridge $(X_r)_{s \leq r \leq t}$ given by
\begin{equation}
\begin{split}
\label{eq:xmeas}
\nu^{x,y}_{[s,t]}(\:\cdot\:) = p(t-s,y-x) \pr^{x,s}_{y,t}(\:\cdot\:).
\end{split}
\end{equation}
Thus for $s = t_0 < t_1 < t_2 < ... < t_n < t_{n+1} = t$ and Borel sets $A_1, A_2, ..., A_n \subset
\R^d$ we have
\begin{equation}
\begin{split}
\label{eq:xmeas1}
\nu^{x,y}_{[s,t]}  (\omega(t_1) \in A_1, & \;\omega(t_2) \in A_2, ..., \omega(t_n) \in A_n) \\
& = \int_{A_1} ... \int_{A_n} \prod_{i=1}^{n+1} p(t_i - t_{i-1}, z_i - z_{i-1}) dz_1...dz_n,
\end{split}
\end{equation}
where $z_0=x$ and $z_{n+1}=y$. Since $\nu^{x,y}_{[s,t]}$ is a measure defined on the set of right
continuous paths with left limits, we may also identify $\nu^{x,y}_{[s,t]}$ as a measure on
$(\Omega_{\R}, \cF_{[s,t]})$.

\smallskip

\subsection{Fractional Schr\"odinger operator and its Feynman-Kac semigroup}
\noindent
Recall that the operator with domain $H^\alpha(\Rd)= \{ f \in L^2(\R^d): \, |k|^{\alpha}\hat f\in
L^2(\R^d)\}$, $0 < \alpha < 2$, defined by its Fourier transform
$$
\widehat{(-\Delta)^{\alpha/2} f}(k) = |k|^\alpha \hat f(k),
$$
is the \emph{fractional Laplacian} of order $\alpha/2$. It is essentially self-adjoint with core 
$C_0^\infty(\R^d)$, and its spectrum is $\Spec ((-\Delta)^{\alpha/2}) =
\Spec_{\rm\tiny{ess}}((-\Delta)^{\alpha/2})=[0,\infty)$.

Let $V: \R^d \to \R$ be a Borel measurable function. We call $V$ \emph{potential} and view it as a
multiplication operator to define fractional Schr\"odinger operators by choosing it from a suitable
function space. We define the space of potentials we will consider.

\begin{definition}{\bf(Fractional Kato-class)}
\rm{
We say that the Borel function $V: \R^d \to \R$ belongs to the \emph{fractional Kato-class} $\cK^\alpha$
if $V$ satisfies either of the two equivalent conditions
$$
\lim_{\varepsilon \rightarrow 0} \sup_{x \in \R^d} \int_{|y-x|<\varepsilon} |V(y) \Pi_\alpha (y-x)| dy = 0
$$
and
$$
\lim_{t \rightarrow 0} \sup_{x \in \R^d} \int_0^t (P_s |V|)(x) ds = 0.
$$
We write $V \in
\cK_{\loc}^\alpha$ if $V \1_B \in \cK^\alpha$ for every ball $B \subset \R^d$. Moreover, we say that $V$
is a \emph{fractional Kato-decomposable potential} whenever
$$
V = V_{+} - V_{-} \quad  \mbox{with} \quad V_{-} \in \cK^\alpha, \;\; V_{+} \in \cK_{\loc}^{\alpha},
$$
where $V_{+}$ and $V_{-}$ denote the positive and negative parts of $V$, respectively.
}
\label{kato}
\end{definition}
\noindent
For the equivalence of the above conditions see (2.5) in \cite{bib:BB2}. To keep the
terminology simple, in what follows we omit the explicit qualifier ``fractional".

\begin{example}
\rm{
Some examples and counterexamples of Kato-potentials are as follows.
\begin{itemize}
\item[(1)]
\emph{Locally bounded potentials:} Let $V \in L^{\infty}_{\loc}(\R^d)$. Then for all $\alpha \in (0,2)$
we have $V \in \cK_{\loc}^\alpha$ and $V$ is Kato-decomposable.
\item[(2)]
\emph{Locally integrable potentials:} Let $\alpha \in (0,2)$. Then $\cK_{\loc}^\alpha \subset L^1_{\loc}(\R^d)$.
\item[(3)]
\emph{Potentials with local singularities:} Let $k \in \N$, $x_i \in \R^d$, $\beta_i > 0$ and
$\varepsilon_i \in \left\{-1,1\right\}$ for $1 \leq i \leq k$. Then the potential
$$
V(x) = \sum_{i=1}^k \varepsilon_i |x-x_i|^{-\beta_i}
$$
belongs to $\cK^{\alpha}$ whenever each $\beta_i < \alpha$ for $\alpha < d$, and $\beta_i < 1$ for 
$\alpha \geq d = 1$.
\item[(4)]
\emph{Coulomb potential:} Let $d = 3$. In the light of (3) above the Coulomb potential $V(x) =
- \frac{C}{|x|}$ belongs to Kato-class $\cK^{\alpha}$ for $\alpha \in (1,2)$ only.
\end{itemize}
}
\label{ex:katoclass}
\end{example}

\smallskip

\begin{definition}[\textbf{Fractional Schr\"odinger operator for bounded potential}]
\rm{
If $V \in L^{\infty}(\Rd)$ we call
\begin{align}
\label{def:frSch}
H_\alpha := (-\Delta)^{\alpha/2} & + V, \quad 0 < \alpha < 2 \,
\end{align}
\emph{fractional Schr\"odinger operator} with potential $V$. We call the one-parameter operator semigroup
$\{e^{-tH_\alpha}: t\geq 0\}$ \emph{fractional Schr\"odinger semigroup}.
}
\end{definition}

\noindent
The above operator is self-adjoint with core $C_0^\infty(\R^d)$.

We define the \emph{Feynman-Kac functional} for the symmetric $\alpha$-stable process by
$$
e_V(t) := e_V(t)(\omega) = e^{-\int_0^t V(X_s(\omega)) ds}, \quad t>0.
$$
If $V \in \cK^\alpha$, then there are constants  $C^{(0)}_V$, $C^{(1)}_V$ such that
\begin{align}
\label{eq:integrab}
\sup_{x \in \R^d} \ex^x[e_{-|V|}(t)] \leq e^{C^{(0)}_V + C^{(1)}_V t}.
\end{align}
When $V$ is Kato-decomposable, then clearly $e_V(t) \leq e_{-V_{-}}(t)$, and therefore
\begin{align}
\label{eq:FKFest}
\sup_{x \in \R^d} \ex^x[e_V(t)] \leq e^{C^{(0)}_{V_{-}} + C^{(1)}_{V_{-}} t}.
\end{align}
\noindent
Clearly, $V_+$ has a killing effect and $V_-$ has a mass generating effect in the Feynman-Kac functional.

The following theorem states that a Feynman-Kac-type formula for fractional Schr\"odinger operators with
Kato-decomposable potentials holds.
\begin{theorem}[\textbf{Functional integral representation}]
Let $V \in L^\infty(\R^d)$, and $f,g \in L^2(\Rd)$. We have
\begin{align}
\label{eq:FKFfrSch}
(f, e^{-tH_\alpha}g)_{L^2}= \int_{\R^d}
\ex^x \left[\overline{f(X_0)}g(X_t) e^{-\int_0^t V(X_s)ds}\right]dx.
\end{align}
Furthermore, let $V$ be a Kato-decomposable potential and define
$$
(T_t f)(x) := \ex^x\left[e_V(t) f(X_t)\right], \quad t\geq 0.
$$
Then $\{T_t: t\geq 0\}$ is a strongly continuous symmetric semigroup. In particular, there exists a
self-adjoint operator $H$ bounded from below such that $e^{-tH} = T_t$.
\label{FK}
\end{theorem}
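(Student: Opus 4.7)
The plan is to establish \eqref{eq:FKFfrSch} first for $V \in L^\infty$ and then bootstrap to the Kato-decomposable case by truncation. For bounded $V$, I would apply the Trotter product formula, available since $(-\Delta)^{\alpha/2}$ is self-adjoint and $V$ acts as a bounded self-adjoint multiplication operator:
\begin{equation*}
e^{-tH_\alpha} = \mbox{s-}\!\lim_{n\to\infty}\bigl(e^{-(t/n)(-\Delta)^{\alpha/2}}e^{-(t/n)V}\bigr)^n.
\end{equation*}
Since $e^{-s(-\Delta)^{\alpha/2}}$ acts as the transition semigroup of $\pro X$, iterated use of the Markov property rewrites the inner product $(f,(e^{-(t/n)(-\Delta)^{\alpha/2}}e^{-(t/n)V})^n g)_{L^2}$ as
$$
\int_{\R^d}\ex^x\Bigl[\overline{f(X_0)}\,g(X_t)\exp\Bigl(-\frac{t}{n}\sum_{k=0}^{n-1}V(X_{kt/n})\Bigr)\Bigr]dx.
$$
For continuous $V$ the Riemann sum converges $\pr^x$-a.s.\ to $\int_0^t V(X_s)ds$ along the right-continuous paths, and the general bounded Borel case follows by a monotone-class argument. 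A crude domination by $|f(X_0)g(X_t)|e^{t\|V\|_\infty}$, together with Cauchy--Schwarz on $dx\otimes d\pr^x$, provides an integrable majorant, so dominated convergence yields \eqref{eq:FKFfrSch}.

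For a Kato-decomposable $V$, introduce the bounded truncations $V_n := (V_+ \wedge n)-(V_- \wedge n)$ and the associated operators $T_t^{(n)}f(x) := \ex^x[e_{V_n}(t)f(X_t)]$. Applying \eqref{eq:FKFest} to $2V$ gives $\sup_x\ex^x[e_V(t)^2]\leq e^{2C^{(0)}_{V_-}+2C^{(1)}_{V_-}t}$, and then Cauchy--Schwarz inside the definition of $T_t$ bounds it as an operator on $L^2$ by $e^{C^{(0)}_{V_-}+C^{(1)}_{V_-}t}$, uniformly in $n$ for the $T_t^{(n)}$. The symmetry of each $T_t^{(n)}$ in $L^2$ follows from the first part since $V_n\in L^\infty$, while the semigroup identity $T_{t+s}^{(n)}=T_t^{(n)}T_s^{(n)}$ is a direct consequence of the Markov property together with the multiplicative functional relation $e_{V_n}(t+s)(\omega)=e_{V_n}(t)(\omega)\,e_{V_n}(s)(\theta_t\omega)$, where $\theta_t$ is the time-shift. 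Passing to the limit $n\to\infty$ via dominated convergence (using $e_{V_n}(t)\to e_V(t)$ pointwise, dominated by $e_{-V_-}(t)$ whose second moment is controlled by the Kato bound above) transfers both symmetry and the semigroup law to $T_t$.

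The main obstacle is strong continuity of $\{T_t\}$ on $L^2$ at $t=0$, where the Kato condition itself, rather than mere boundedness, must be invoked. Splitting
\begin{equation*}
(T_tf)(x)-f(x)=\ex^x\bigl[(e_V(t)-1)f(X_t)\bigr]+\bigl((P_tf)(x)-f(x)\bigr),
\end{equation*}
the second term vanishes in $L^2$ by strong continuity of the free stable semigroup $\{P_t\}$. For the first, a Khasminskii-type estimate, which is the standard consequence of Definition \ref{kato} and \eqref{eq:integrab}, yields $\sup_x\ex^x[(e_V(t)-1)^2]\to 0$ as $t\to 0$, so Cauchy--Schwarz in the inner expectation together with $\|f\|_2$ forces $\|\ex^{\cdot}[(e_V(t)-1)f(X_t)]\|_2\to 0$. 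With $\{T_t\}$ now established as a strongly continuous symmetric semigroup on $L^2$ satisfying $\|T_t\|\leq e^{ct}$, the spectral theorem (or Hille--Yosida applied to $e^{-ct}T_t$) produces a unique self-adjoint operator $H$ bounded from below with $T_t=e^{-tH}$.
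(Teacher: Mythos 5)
The paper does not actually prove this theorem; it is quoted and attributed to reference \cite{bib:HIL}, so there is no in-paper argument to compare against. Your overall strategy---Trotter product formula for $V\in L^\infty$, truncation $V_n=(V_+\wedge n)-(V_-\wedge n)$ for the Kato-decomposable case, and Hille--Yosida to produce the self-adjoint generator---is the standard route and is essentially sound in its outline. The symmetry, semigroup law, and uniform operator bound all go through as you describe.

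However, your argument for strong continuity contains a genuine error. You claim that the Kato property yields $\sup_{x}\ex^x[(e_V(t)-1)^2]\to 0$ as $t\to 0$, and you use this together with Cauchy--Schwarz to bound $\|T_tf-P_tf\|_2$. This claim is false whenever $V_+$ is unbounded, which is exactly the case of interest in this paper. Take $V(x)=|x|^2$, so $V_-\equiv 0$ and $V$ is Kato-decomposable. For any fixed $t>0$, as $|x|\to\infty$ the process remains in $\{|y|>|x|/2\}$ up to time $t$ with probability tending to $1$, hence $\int_0^tV(X_s)\,ds\ge t|x|^2/4\to\infty$ with probability tending to $1$, and so $\ex^x[(e_V(t)-1)^2]\to 1$. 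Thus $\sup_x\ex^x[(e_V(t)-1)^2]=1$ for every $t>0$, and the quantity does not even decrease as $t\to 0$. Khasminskii's lemma controls $e_{-V_-}(t)$ (since $V_-\in\cK^\alpha$) but gives no uniform-in-$x$ control on the $e_{V_+}(t)$ factor when $V_+\in\cK^\alpha_{\loc}$ only.

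To repair this you should not aim for a sup-in-$x$ estimate. Instead, use the symmetry of the semigroup: for a symmetric, uniformly bounded family $\{T_t\}$ with the semigroup law, weak continuity at $t=0$ on a dense set implies strong continuity via $\|T_tf-f\|_2^2=(T_{2t}f,f)-2(T_tf,f)+\|f\|_2^2$. For $f,g\in C_c^\infty$ one then checks $(f,T_tg)-(f,P_tg)\to 0$ by splitting $|e_V(t)-1|\le (e_{-V_-}(t)-1)+e_{-V_-}(t)(1-e_{V_+}(t))$; the first piece is handled uniformly by the Kato estimate on $V_-$, while the second piece tends to zero pointwise in $x$ and is dominated by the $x$-integrable function $C\,|f(x)|\,\|g\|_\infty\,\ex^x[e_{-V_-}(1)]$, so dominated convergence in $x$ finishes the job. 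Alternatively one can invoke monotone convergence of forms for the truncated semigroups $T^{(n)}_t$. Either way, the key point you are missing is that the Kato-local condition on $V_+$ does not give any uniform decay, and the proof must be arranged so that only $V_-$ ever needs to be controlled uniformly over $x$.
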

\noindent
For a proof we refer to \cite{bib:HIL}. For sufficiently regular potentials $V$ we can define $H_\alpha$ as
an operator sum, while for general Kato-decomposable potentials we use $H$ in the theorem above to define
$H_\alpha$ as a self-adjoint operator.
\begin{definition}[\textbf{Fractional Schr\"odinger operator for Kato-class}]
\rm{
Let $V$ be a Kato decomposable potential. We call $H$ given by Theorem \ref{FK} a \emph{fractional
Schr\"odinger operator for Kato-decomposable potential} $V$. We refer to the one-parameter operator
semigroups $\{e^{-tH_\alpha}: t\geq 0\}$ and $\{T_t:  t\geq 0\}$ as the \emph{fractional Schr\"odinger
semigroup} and \emph{Feynman-Kac semigroup} with Kato-decomposable potential $V$, respectively.
}
\label{fSchKato}
\end{definition}

Kato-decomposable potentials allow good regularity properties of the corresponding Feynman-Kac semigroup.
By \cite[Th. 4.13]{bib:HIL} each $T_t$ is a bounded operator from $L^p(\R^d)$ to $L^q(\R^d)$, for all $1
\leq p \leq q \leq \infty$. Moreover, it can be verified directly that all operators $T_t$ are positivity preserving.
Now we state the existence and basic properties of the kernel for the semigroup $\{T_t: t \geq 0 \}$.	

\begin{lemma}
\label{lm:kernel}
Let $V$ be a Kato-decomposable potential. The following properties hold:
\begin{itemize}	
\item[(1)]
for every fixed $t > 0$ the operator $T_t$ has a bounded integral kernel $u(t,x,y)$, i.e. $T_tf(x) = \int_{\R^d}
u(t,x,y) f(y) dy$, $t>0$, $x \in \R^d$, $f \in L^p(\R^d)$, $1\leq p \leq \infty$;
\item[(2)]
$u(t,x,y) = u(t,y,x)$, for every $t>0$, $x,y \in \R^d$;
\item[(3)]
for every $t >0$, $u(t,x,y)$ is continuous on $\R^d \times \R^d$;
\item[(4)]
$u(t,x,y)$ is strictly positive on $(0,\infty) \times \R^d \times \R^d$;
\item[(5)]
for all $x,y \in \R^d$ and $s,t \in \R$, $s<t$, the functional representation
\begin{align}
\label{eq:FKF}
u(t-s,x,y) = \int e^{-\int_s^t V(X_r(\omega))dr} d\nu^{x,y}_{[s,t]}(\omega),
\end{align}
holds, where the $\alpha$-stable bridge measure $\nu^{x,y}_{[s,t]}$ is given by (\ref{eq:xmeas}).
\end{itemize}
\end{lemma}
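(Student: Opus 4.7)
The plan is to establish the functional integral representation (5) first by conditioning on the terminal value of the process, then derive (1), (2) and (4) as direct consequences, and finally handle (3) via a semigroup bootstrap argument, which is the main technical hurdle.

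For (5), I would start from the Feynman--Kac formula $T_t f(x) = \ex^x[e_V(t) f(X_t)]$ established in Theorem \ref{FK} and condition on $X_t$. The regular conditional-law formula \eqref{eq:cnd} gives
\begin{equation*}
T_t f(x) = \int_{\R^d} \ex^{x,0}_{y,t}[e_V(t)] \, p(t, y-x) \, f(y) \, dy,
\end{equation*}
and rewriting the inner factor as the $\nu^{x,y}_{[0,t]}$-integral of $e_V(t)$ via \eqref{eq:xmeas} identifies
\begin{equation*}
u(t, x, y) := \int e^{-\int_0^t V(X_r(\omega))dr} \, d\nu^{x,y}_{[0,t]}(\omega)
\end{equation*}
as a pointwise-defined integral kernel for $T_t$; the time-translation invariance of the $\alpha$-stable process extends this to arbitrary $s < t$, giving (5). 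The required Fubini-type exchange is justified by \eqref{eq:FKFest}, and the fact that $V_+ \in \cK^\alpha_{\loc} \subset L^1_{\loc}(\R^d)$ ensures $\int_0^t V(X_r) dr$ is finite along $\nu^{x,y}_{[0,t]}$-a.e. path, for Lebesgue-a.e.\ $y$.

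Properties (1), (2) and (4) then drop out from (5). Symmetry (2) is obtained from the time-reversal invariance of the $\alpha$-stable bridge, which exchanges $\nu^{x,y}_{[0,t]}$ with $\nu^{y,x}_{[0,t]}$ while leaving $e_V(t)$ unchanged. Strict positivity (4) is immediate: $e_V(t) > 0$ on a set of full bridge measure, and $\nu^{x,y}_{[0,t]}$ has strictly positive total mass $p(t, y-x)$ for every $x, y \in \R^d$. For boundedness (1), I would use the pointwise bound $e_V(t) \leq e_{-V_{-}}(t)$ together with either the $L^1 \to L^\infty$ continuity of $T_t$ from \cite[Th. 4.13]{bib:HIL} and Dunford--Pettis, or a direct pointwise estimate based on the semigroup factorisation $u(t,x,y) = \int u(t/2,x,z)u(t/2,z,y)dz$ combined with Cauchy--Schwarz and the Kato-class control \eqref{eq:FKFest}.

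The principal obstacle is joint continuity (3), for which I plan a bootstrap. First, truncate the potential by $V_n := (-n) \vee V \wedge n \in L^\infty(\R^d)$ and show that the corresponding bounded-potential kernels $u_n(t/2, x, z)$ are jointly continuous, using the bridge representation, the continuity of the free density $p(t, \cdot)$, weak continuity of $\nu^{x,y}_{[0,t]}$ in its endpoints (read off from the finite-dimensional formula \eqref{eq:xmeas1}), and dominated convergence. The uniform bound $e_{V_n}(t) \leq e_{-V_-}(t)$ and Kato-class integrability then allow passage to the limit $n \to \infty$ to transfer continuity to $u(t/2, \cdot, \cdot)$. Finally, the semigroup identity $u(t,x,y) = \int u(t/2,x,z) u(t/2,z,y) dz$, together with a uniform $L^2(dz)$-bound on $u(t/2, x, \cdot)$ (obtained from symmetry and the semigroup's $L^1\to L^\infty$ mapping), lifts continuity to $u(t, \cdot, \cdot)$ on all of $\R^d \times \R^d$ by dominated convergence. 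The most delicate point is verifying equicontinuity of the truncated family uniformly in $n$, so that the limiting bridge integral inherits continuity without requiring any regularity of $V$ beyond Kato-decomposability.
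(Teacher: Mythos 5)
Your treatment of (5), and its use to obtain (1), is sound and matches the standard approach: condition on $X_t$ via \eqref{eq:cnd} to read off the kernel as a bridge integral, then use the $L^1\to L^\infty$ boundedness. Your routes to (2) via bridge time-reversal and to (4) via a.s.\ positivity of $e_V(t)$ under $\nu^{x,y}_{[0,t]}$ differ from the usual ones (which obtain a.e.\ symmetry from the $L^2$-symmetry of $T_t$ and upgrade to everywhere via (3), and obtain strict positivity from a Jensen-inequality lower bound on $T_t\1_A(x)$ combined with Chapman--Kolmogorov), but both of your alternatives are workable. For (4), however, \emph{immediate} hides a real step: you must verify $\int_0^t V_+(X_r)\,dr<\infty$ $\nu^{x,y}_{[0,t]}$-a.s., which follows from the local absolute continuity of the bridge law on $\cF_{[0,t/2]}$ with respect to $\pr^x$ (density $p(t/2,y-X_{t/2})/p(t,y-x)$), the analogous statement on $\cF_{[t/2,t]}$ by time reversal, and $V_+\in\cK^\alpha_{\loc}$ together with a.s.\ boundedness of c\`adl\`ag paths on $[0,t]$.

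The genuine gap is in (3). You claim that joint continuity of the bounded-potential kernel $u_n(t/2,\cdot,\cdot)$ follows from ``the bridge representation, the continuity of $p$, weak continuity of $\nu^{x,y}_{[0,t]}$ in its endpoints, and dominated convergence.'' This does not close. Weak convergence $\nu^{x_k,y_k}\Rightarrow\nu^{x,y}$ of bridge laws plus a bounded integrand yields convergence of the integrals only when the integrand is $\nu^{x,y}$-a.s.\ continuous on the Skorokhod space; but for a bounded \emph{merely Borel} $V_n$ the functional $\omega\mapsto\int_0^tV_n(\omega(r))\,dr$ is not continuous in $\omega$, so neither is $e_{V_n}(t)$. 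Dominated convergence does not apply either: the measure is changing, not the integrand along a fixed measure. The standard argument (and what the paper invokes by referring to \cite[Section 3.2]{bib:CZ}) is to establish the strong Feller property $T_t:L^\infty(\Rd)\to C_b(\Rd)$ directly --- for instance via Duhamel's formula $T_tf=P_tf-\int_0^tP_s(VT_{t-s}f)\,ds$ for bounded $V$, and by truncating only the positive part, $V_n:=\1_{B(0,n)}V_+-V_-\in\cK^\alpha$, with the truncation error controlled by $\pr^x(\tau_{B(0,n)}<t)\to0$ uniformly on compacts --- and then to obtain \emph{joint} continuity by passing to the doubled process $\pro{\check X}=(\pro{X^{(1)}},\pro{X^{(2)}})$ with potential $\check V(x_1,x_2)=V(x_1)+V(x_2)$, applying $\check T_t:L^\infty(\Rd\times\Rd)\to C_b(\Rd\times\Rd)$ to the bounded measurable function $u(t,\cdot,\cdot)$, and identifying the result with $u(3t,\cdot,\cdot)$ by the semigroup property and (2). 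Without first establishing strong Feller for some member of the family, your ``semigroup lift'' has no continuous starting kernel to lift.
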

\noindent
The proof of this lemma follows by standard arguments based on \cite[Section 3.2]{bib:CZ} and we omit it.

\subsection{Potential theory of fractional Schr\"odinger operators}
\noindent
Here we introduce some potential theoretic tools for fractional Schr\"odinger operators needed for
our purposes, and show some technical lemmas to be used in proving our results concerning intrinsic
ultracontractivity and ground state estimates below. For background we refer to
\cite{bib:BB1, bib:BB2, bib:CS, bib:CS1, bib:CZ}.

The potential operator for the semigroup $\{T_t: t \geq 0\} $ is defined by
\begin{align*}
G^V f(x) = \int_0^\infty T_t f(x) dt = \ex^x \left[\int_0^\infty e_V(t) f(X_t) dt \right],
\end{align*}
for non-negative Borel functions $f$ on $\R^d$. If $\int_0^\infty \left\|T_t\right\|_\infty dt < \infty$,
then by the $L^p$-to-$L^q$ boundedness of $T_t$ it follows that $G^V$ is a bounded operator on $L^p(\R^d)$,
$1 \leq p \leq \infty$. In particular, $G^V \1 \in L^\infty$ and $G^V$ has a symmetric kernel given by
$G^V(x,y)=\int_0^\infty u(t,x,y) dt$, i.e., $G^V f(x)= \int_{\R^d} G^V(x,y)f(y)dy$.

The $V$-Green operator for an open set $D$ is defined by
\begin{align*}
G^V_D f(x)  = \int_0^\infty \ex^x \left[t<\tau_D; e_V(t) f(X_t) \right] dt =
\ex^x \left[\int_0^{\tau_D} e_V(t) f(X_t) dt \right],
\end{align*}
for non-negative Borel functions $f$ on $D$, where $\tau_D = \inf\{t > 0: \, X_t \notin D\}$ is the first 
exit time of the process $(X_t)_{t \geq 0}$ from the set $D$. Denote
$$
v_D(x) = G^V_D \1 (x).
$$

The following technical lemma will be used below.

\begin{lemma}
\label{lm:Greenop}
Let $D \subset \R^d$ be a non-empty bounded open set, and $V$ be a strictly positive and bounded
potential on $D$. Then for all $x \in D$ we have
$$
\left(1-\exp(-\sup_{y \in D} V(y))\right) \frac{\pr^x(\tau_D > 1)}{\sup_{y \in D} V(y)} \leq v_D(x) \leq
\frac{1}{\inf_{y \in D} V(y)}.
$$
\end{lemma}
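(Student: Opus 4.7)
The plan is a direct pointwise estimate of the integrand $e^{-\int_0^t V(X_s)ds}$ on the event $\{t<\tau_D\}$, where the path $X_\cdot$ lies entirely in $D$, so that the bounds $V_{\min} \leq V(X_s) \leq V_{\max}$ (with $V_{\min} = \inf_{y\in D}V(y)$ and $V_{\max} = \sup_{y\in D}V(y)$) apply throughout the integration in $s$. After that the time integral becomes elementary.

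For the upper bound, I would observe that on $\{t<\tau_D\}$ we have $\int_0^t V(X_s)\,ds \geq V_{\min} t$, so
\begin{equation*}
v_D(x) \;=\; \ex^x\!\left[\int_0^{\tau_D} e^{-\int_0^t V(X_s)ds}\,dt\right]
\;\leq\; \ex^x\!\left[\int_0^{\tau_D} e^{-V_{\min} t}\,dt\right]
\;=\; \ex^x\!\left[\frac{1-e^{-V_{\min}\tau_D}}{V_{\min}}\right]
\;\leq\; \frac{1}{V_{\min}},
\end{equation*}
which is exactly the claimed upper bound.

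For the lower bound, the symmetric estimate on $\{t<\tau_D\}$ gives $\int_0^t V(X_s)\,ds \leq V_{\max} t$, hence
\begin{equation*}
v_D(x) \;\geq\; \ex^x\!\left[\int_0^{\tau_D} e^{-V_{\max} t}\,dt\right]
\;=\; \ex^x\!\left[\frac{1-e^{-V_{\max}\tau_D}}{V_{\max}}\right].
\end{equation*}
To produce the factor $\pr^x(\tau_D>1)$ I would restrict the expectation to the event $\{\tau_D>1\}$, on which $1-e^{-V_{\max}\tau_D} \geq 1-e^{-V_{\max}}$ since $V_{\max}>0$; dropping the contribution from $\{\tau_D\leq 1\}$ (which is non-negative) yields
\begin{equation*}
v_D(x) \;\geq\; \frac{1-e^{-V_{\max}}}{V_{\max}}\,\pr^x(\tau_D>1),
\end{equation*}
matching the stated inequality.

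There is no real obstacle here; the argument is essentially the Feynman--Kac representation combined with two calculus exercises. The only point requiring a modicum of care is noting that the bounds on $V$ are only assumed on $D$, which is precisely why one uses the exit time $\tau_D$ as the upper limit and restricts to path fragments that have not yet left $D$; since $V$ is strictly positive on $D$, division by $V_{\min}$ and $V_{\max}$ is legitimate and the expressions are well-defined and finite.
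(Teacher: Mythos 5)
Your proof is correct and takes essentially the same approach as the paper: replace $V(X_s)$ by its infimum or supremum over $D$ on the event $\{t < \tau_D\}$, evaluate the resulting elementary time integral, and for the lower bound restrict to $\{\tau_D > 1\}$ to extract the factor $\pr^x(\tau_D > 1)$.
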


\begin{proof}
Fix $D \subset \R^d$. To simplify the notation denote $\beta = \sup_{y \in D} V(y)$ and  $\zeta =
\inf_{y \in D} V(y)$. For $x \in D$ we have
\begin{align*}
v_D(x)
& = \ex^x \left[ \int_0^{\tau_D} e^{-\int_0^t V(X_s) ds} dt\right]
\geq \ex^x \left[ \int_0^{\tau_D} e^{- \beta t } dt\right] \\
& = \frac{\ex^x \left[ 1 - e^{-\beta \tau_D  } \right]}
{\beta} \geq \left(1-e^{-\beta}\right) \frac{\pr^x(\tau_D > 1)}{\beta} \, .
\end{align*}
Moreover,
$$
v_D(x) =
\ex^x \left[ \int_0^{\tau_D} e^{-\int_0^t V(X_s) ds} dt\right] \leq
\ex^x \left[ \int_0^{\tau_D} e^{-\zeta t} dt\right] =
\ex^x \left[1-e^{-\zeta \tau_D}\right]\zeta^{-1} \leq \zeta^{-1}.
$$
\end{proof}

Furthermore, if $D'$ is an open set such that $D \subset D' \subseteq \R^d$ and $f$ is a non-negative
Borel function on $D'$, then by the strong Markov property of stable processes we have for every $x \in D$
\begin{equation}
\label{eq:pot1}
\begin{split}
G^V_{D'}f(x)
& =
\ex^x \left[\int_0^{\tau_D} e_V(t) f(X_t) dt \right] +
\ex^x \left[\int_{\tau_D}^{\tau_{D^{'}}} e_V(t) f(X_t) dt \right] \\
& =
G^V_D f(x) + \ex^x \left[e^{-\int_0^{\tau_D} V(X_s) ds} \int_{\tau_D}^{\tau_{D'}}
e^{-\int_{\tau_D}^t V(X_s) ds} f(X_t) dt \right] \\
& =
G^V_D f(x) + \ex^x \left[e_V(\tau_D) \ex^{X_{\tau_D}}\left[ \int_0^{\tau_{D^{'}}}
e_V(t) f(X_t) dt \right]\right] \\
& =
G^V_D f(x) + \ex^x\left[e_V(\tau_D)G^V_{D^{'}}f(X_{\tau_D})\right].
\end{split}
\end{equation}

Define $\Phi(t)=\sup_{x \in \R^d} \ex^x[t<\tau_D;e_V(t)]$, $t>0$. If $\Phi \in L^1(0,\infty)$, then by
standard arguments $G^V_D \1 \in L^\infty$ and $G^V_D$ is given by a symmetric kernel $G^V_D(x,y)$, 
$x,y \in D$, i.e., $G^V_D f(x) = \int_{D} G^V_D(x,y)f(y)dy$ (see \cite[cor.Th.3.18]{bib:CZ} and
\cite[p.58]{bib:BB1}). It can be easily checked that this condition is satisfied when, for instance, 
$V \in \cK^\alpha_{\loc}$, $V \geq C_V > 0$ on $D$. The function $G^V_D(x,y)$ is the $V$-Green function 
of the set $D$.

It is easy to see that if $V \geq 0$ on $D$, then the function $u_D(x) := \ex^x[e_V(\tau_D)]$ is 
bounded in $D$. If $D$ is a bounded domain with the exterior cone property and $u_D(x)$ is bounded in $D$, 
then for $f \geq 0$ we have
\begin{align}
\label{eq:IWF}
\ex^x[e_V(\tau_D)f(X_{\tau_D})] & = \cA \int_D G^V_D(x,y) \int_{D^c} \frac{f(z)}{|z-y|^{d+\alpha}} dz dy,
\quad x \in D,
\end{align}
see \cite[eq. (17), Th. 4.10]{bib:BB1}.

The following estimate will be important below. For any $\gamma \geq 0 $, $\gamma \neq d$, there exists $C_{\gamma}>0$
such that
\begin{align}
\label{eq:cruest}
\int_{B(x,|x|/4)^c}(1+|y|)^{-\gamma}|x-y|^{-d-\alpha}dy & \le C_{\gamma}|x|^{-\gamma'}
\end{align}
for $|x| \geq 1$, where $\gamma'=\min(\gamma + \alpha, d + \alpha)$. The result follows from \cite[Lemma 4]{bib:Kw}
for $\gamma>0$, while for $\gamma = 0$ it is trivial.

The next lemma is a generalization to Kato class of \cite[Lemma 6]{bib:KaKu}, where the result was obtained for
$V \in L^{\infty}_{\loc}$. It concerns the comparability of functions $u_D$ and $v_D$ when $D$
is a ball, and plays a crucial role in the proofs of the main theorems in this section.

\begin{lemma}
\label{lm:est}
Let $V \in \cK^\alpha_{\loc}$, $D = B(x,r)$, $r > 0$ and $0 < \kappa <1$. There exists a constant
$C_{r,\kappa}>0$ such that if $V \geq 0$ on $D$, then
\begin{align}
\label{eq:est}
C_{r,\kappa}^{-1} \, v_D(y) & \le u_D(y) \le C_{r,\kappa} \, v_D(y)
\end{align}
for all $y \in B(x,\kappa r)$, $x \in \R^d$.
\end{lemma}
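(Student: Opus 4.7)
The plan is to use the Ikeda--Watanabe representation \eqref{eq:IWF} with $f \equiv 1$:
\[
u_D(y) = \cA \int_D G^V_D(y,z)\, J_D(z)\, dz, \qquad v_D(y) = \int_D G^V_D(y,z)\, dz,
\]
where $J_D(z) := \int_{D^c} |z-w|^{-d-\alpha}\, dw$. Both quantities are integrals against the same kernel $G^V_D(y,\cdot)$, so the lemma amounts to controlling the weighted average of $J_D$ by constants depending only on $r$ and $\kappa$.

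For the lower bound $u_D \geq C_{r,\kappa}^{-1} v_D$, I would note that for every $z \in D = B(x,r)$, using $|z-w| \leq 2|w-x|$ on $\{|w-x|\geq 2r,\,|z-x|\leq r\}$,
\[
J_D(z) \geq \int_{B(x,2r)^c} |z-w|^{-d-\alpha}\, dw \geq c_1\, r^{-\alpha},
\]
with $c_1 = c_1(d,\alpha)$. Substituting into the Ikeda--Watanabe formula yields $u_D(y) \geq \cA c_1 r^{-\alpha}\, v_D(y)$ for every $y \in D$, which gives the first inequality in \eqref{eq:est}.

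The reverse inequality $u_D \leq C_{r,\kappa}\, v_D$ is harder. I would pick $\kappa':=(1+\kappa)/2$, set $B':=B(x,\kappa' r)$, and split the integration in $z$ at $B'$. On $B'$, $\delta_D(z) \geq (1-\kappa')r$ gives $J_D(z) \leq c_2(\kappa)\, r^{-\alpha}$, so the $B'$-contribution is bounded by $c_2(\kappa) r^{-\alpha}\, v_D(y)$. For the annular piece, where $J_D$ diverges at $\partial D$, I would apply the strong Markov property at $\tau_{B'}$ to the $V$-harmonic function $u_D$:
\[
u_D(y) = \ex^y\!\left[e_V(\tau_{B'})\, u_D(X_{\tau_{B'}})\right] \leq \ex^y[e_V(\tau_{B'})] =: u_{B'}(y),
\]
using $u_D \leq 1$ everywhere. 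Since $v_{B'}(y)\leq v_D(y)$, this reduces the problem to the same comparability on the smaller ball $B'$, with $y\in B(x,\kappa r)$.

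The above reduction merely relocates the difficulty; closing it is the main obstacle. My plan is to prove $u_{B'}(y)\leq C(\kappa) r^{-\alpha}\, v_{B'}(y)$ directly from the Kato-class hypothesis, without self-reference. Since $V\1_{B'}\in\cK^\alpha$, Definition \ref{kato} supplies a time $t_0 > 0$ with $\sup_z \ex^z[\int_0^{t_0} V\1_{B'}(X_s)\,ds] \leq 1/2$, so that Jensen's inequality gives $\ex^z[e_V(t_0)] \geq e^{-1/2}$. Combined with the standard exit-time estimate $\pr^y(\tau_{B'} > t) \geq c_3(\kappa) > 0$ valid for $y\in B(x,\kappa r)$ and $t \leq c_4(1-\kappa)^\alpha r^\alpha$, Markov decompositions of both $u_{B'}(y)$ and $v_{B'}(y)$ at $t_0 \wedge \tau_{B'}$ (together with the monotonicity $e_V(s) \geq e_V(t_0 \wedge \tau_{B'})$ for $s \leq t_0 \wedge \tau_{B'}$, which follows from $V \geq 0$) yield coupled inequalities whose careful optimization in $t_0$ delivers $u_{B'}/v_{B'} \leq C(\kappa)\, r^{-\alpha}$. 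The principal subtlety --- and the genuine novelty relative to the $L^\infty_{\loc}$ case of \cite{bib:KaKu} --- is balancing the Kato time $t_0$ against the stable scale $r^\alpha$; when these are of very different orders, the bound must be closed by a two-scale iteration, and this balancing is the technical heart of the argument.
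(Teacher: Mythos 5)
Your lower bound via Ikeda--Watanabe and the estimate $J_D(z)\ge c\,r^{-\alpha}$ is correct and matches the paper's argument. The upper bound is where the proposal has a genuine gap. You honestly flag the problem yourself: the strong-Markov reduction $u_D\le u_{B'}$, $v_D\ge v_{B'}$ only relocates the difficulty, and the Kato-time argument you sketch ("coupled inequalities whose careful optimization in $t_0$ delivers $u_{B'}/v_{B'}\le C(\kappa)r^{-\alpha}$", closed by "a two-scale iteration") is never carried out. But there is a structural reason the route is problematic, not just an unfilled detail: your argument necessarily uses the Kato time $t_0$ for $V\1_{B'}$, which depends on $V$ and, since $V\in\cK^\alpha_{\loc}$ only, on the location of the ball $B'=B(x,\kappa' r)$. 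The lemma asserts a single constant $C_{r,\kappa}$, uniform over $x\in\Rd$ and independent of $V$; a bound that degrades with the local Kato modulus of $V$ is strictly weaker. Concretely, after writing $u_{B'}(y)\le \pr^y(\tau_{B'}\le t_1)+v_{B'}(y)/t_1$ and choosing $t_1\asymp C(\kappa)r^\alpha$ so that $\pr^y(\tau_{B'}\le t_1)$ is small, you must also have $\sup_z\ex^z[\int_0^{t_1}V\1_{B'}(X_s)\,ds]\le 1/2$; nothing in the hypotheses forces this to hold with $t_1$ proportional to $r^\alpha$ uniformly in $x$, and for a $\cK^\alpha_{\loc}$ potential with increasingly bad singularities far from the origin it will fail.

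The paper's upper bound sidesteps all of this with an operator identity rather than a time decomposition. Take a bump function $f\in C^2(\Rd)$ with $f\equiv 1$ on $B(x,\kappa r)$, $f\equiv 0$ off $B(x,r)$, $0\le f\le 1$. By \cite[Prop.~3.16]{bib:BB1}, $G^V_D\bigl((-\Delta)^{\alpha/2}f+Vf\bigr)=f$ on $D$, so for $z\in B(x,\kappa r)$
\begin{equation*}
\int_D G^V_D(z,y)(-\Delta)^{\alpha/2}f(y)\,dy \ \ge\ 1-\int_D G^V_D(z,y)V(y)\,dy
= 1-\ex^z\!\left[\int_0^{\tau_D}e_V(t)V(X_t)\,dt\right]=\ex^z[e_V(\tau_D)]=u_D(z),
\end{equation*}
using the calculus identity $\int_0^{\tau_D}e_V(t)V(X_t)\,dt=1-e_V(\tau_D)$ a.s. This gives $u_D(z)\le\|(-\Delta)^{\alpha/2}f\|_\infty\,v_D(z)$, with a constant that by translation and $\alpha$-stable scaling is $C(d,\alpha,\kappa)\,r^{-\alpha}$, independent of $V$ and $x$. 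The only new work needed beyond the $L^\infty_{\loc}$ case of \cite{bib:KaKu} is to justify the calculus identity when $V\in\cK^\alpha_{\loc}$; this follows because $V\1_D\in\cK^\alpha$, so $t\mapsto V\1_D(X_t)$ is a.s.\ locally integrable and $t\mapsto e_{V\1_D}(t)$ is a.s.\ locally absolutely continuous. This is exactly the point the paper singles out, and it is much lighter than the balancing argument you would need to run.
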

\begin{proof}
The proof can be done by similar arguments as for its version in case of $V \in L^{\infty}_{\loc}$. However, 
the equality
\begin{align}
\label{eq:riemann}
\int_0^{\tau_D} e_V(t)V(X_t)dt & = 1 - e_V(\tau_D), \quad \pr^z-\mbox{a.s.}, \; z \in \Rd
\end{align}
valid in that case needs to be modified here. To obtain it for $V \in \cK^\alpha_{\loc}$ it suffices to observe 
that $V_D \in \cK^\alpha$ for $V_D = V \1_D$. Then for all $z \in \Rd$ the function $\Phi(t)=V_D(X_t)$ is
$\pr^z$-a.s. locally integrable in $(0,\infty)$ and $e_{V_D}(t)$ is $\pr^z$-a.s. locally absolutely
continuous in $(0,\infty)$, and thus \eqref{eq:riemann} follows.
\end{proof}

By using the above lemma it is possible to extend \cite[Theorem 6]{bib:KaKu} to potentials $V \in
\cK^\alpha_{\loc}$. This implies
the following estimate which will be a crucial step in the proof of the characterization of 
ultracontractivity properties of the fractional Schr\"odinger semigroup below.

\begin{lemma}
\label{cor:bhisc}
Let $V \in \cK^\alpha_{\loc}$. Suppose that there is $R > 0$ such that $V(x) \geq 1$ for $|x| \geq R$.
Then there exists a constant $C>0$ such that if $r > 0$, $x_0 \in \Rd$, $|x_0| - r \ge R$ and $f(x) =
\ex^x[e_V(\tau_{B(x_0,r)})f(X_{\tau_{B(x_0,r)}})]$ for $x \in B(x_0,r)$, $f \ge 0$, then
\begin{align}
\label{eq:bhisc}
f(x) & \le C \int_{B(x_0,r/2)^c}\frac{f(y)}{|y-x_0|^{d+\alpha}}dy
\end{align}
for $x \in B\left(x_0,r/2\right)$.
\end{lemma}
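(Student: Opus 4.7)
The plan is to combine the Harnack-type inequality obtained by extending \cite[Theorem 6]{bib:KaKu} to $V \in \cK^\alpha_{\loc}$ (via Lemma \ref{lm:est}, as indicated just before the statement) with the elementary upper bound for $v_D$ from Lemma \ref{lm:Greenop}. First I would set $D = B(x_0,r)$ and observe that the hypothesis $|x_0|-r \ge R$ forces $D \subset \{y \in \R^d : |y| \ge R\}$, since for every $y \in D$ one has $|y| \ge |x_0| - |y-x_0| > |x_0| - r \ge R$. Hence $V \ge 1$ on $D$, so in particular $V$ is strictly positive and, being locally Kato, still satisfies the hypotheses needed to apply the (extended) Harnack-type estimate on~$D$.

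The extended version of \cite[Theorem 6]{bib:KaKu}, valid for $V \in \cK^\alpha_{\loc}$ with $V \ge 0$ on $D$, asserts the existence of a universal constant $C_1 > 0$ such that for any non-negative $f$ with $f(x) = \ex^x[e_V(\tau_D)f(X_{\tau_D})]$ on $D$,
\begin{equation*}
f(x) \;\le\; C_1\, v_D(x) \int_{B(x_0,r/2)^c} \frac{f(y)}{|y-x_0|^{d+\alpha}}\, dy, \qquad x \in B(x_0,r/2).
\end{equation*}
The passage from the $L^\infty_{\loc}$-version to the $\cK^\alpha_{\loc}$-version relies exactly on the extended comparability $v_D \asymp u_D$ given by Lemma \ref{lm:est}, together with \eqref{eq:IWF}; no new ingredients are needed beyond what the excerpt already provides.

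Next I would remove the factor $v_D(x)$ using Lemma \ref{lm:Greenop}. Since $V \ge 1$ on $D$, that lemma yields
\begin{equation*}
v_D(x) \;\le\; \frac{1}{\inf_{y \in D} V(y)} \;\le\; 1
\end{equation*}
for all $x \in D$, uniformly in $r$ and $x_0$. Substituting this into the Harnack-type inequality gives the desired estimate with $C = C_1$.

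The only genuinely delicate step is justifying the Harnack-type inequality for Kato-class potentials on a ball whose radius and center are arbitrary (subject to the exterior condition). The hypothesis $V \ge 1$ on $D$ is what makes the constant in \eqref{eq:bhisc} independent of $r$ and $x_0$; without the uniform lower bound of $V$ on $D$ one would only obtain an estimate involving $v_D(x)$ which could blow up with $r$. This uniformity is precisely the feature that will later drive the ultracontractivity arguments in Section 4.
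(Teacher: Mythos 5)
Your proof is correct and follows essentially the same route as the paper's intended argument: observe that $|x_0|-r\ge R$ forces $V\ge 1$ on $D=B(x_0,r)$, apply the Harnack-type estimate extended from \cite[Theorem 6]{bib:KaKu} via Lemma~\ref{lm:est} to get $f(x)\le C_1\,v_D(x)\int_{B(x_0,r/2)^c}f(y)|y-x_0|^{-d-\alpha}\,dy$, and then absorb $v_D(x)\le 1$. The only cosmetic difference is that you invoke Lemma~\ref{lm:Greenop} to deduce $v_D\le 1$, whereas the paper computes the same bound $\ex^x[\int_0^{\tau_D}e^{-\int_0^t V(X_s)ds}\,dt]\le\int_0^\infty e^{-t}\,dt=1$ inline — the two are identical in substance.
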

\noindent
Note that the function satisfying the mean-value property as in the lemma above is known as regular 
$V$-harmonic in $B(x_0,r)$ (for more details see \cite[p. 83]{bib:BB1}).

\section{Ground state estimates for fractional Schr\"odinger operators}



\subsection{Ground state}
\noindent
The following is a standing assumption for the remainder of this paper.
\begin{assumption}
\rm{
We assume that $\lambda_0 := \inf \Spec H_{\alpha}$ is an isolated eigenvalue and the corresponding
eigenfunction $\varphi_0$ such that $\|\varphi_0\|_2=1$, called \emph{ground state}, exists.
\label{gs}
}
\end{assumption}

\begin{remark}
\rm{
\hspace{100cm}
\begin{trivlist}
\item[\quad (1)]
\emph{Existence:}
There are few results in the literature on the existence of ground states for fractional Schr\"odinger
operators. In \cite[Th. V.1]{bib:CMS} the case of ``shallow" potentials has been discussed. Specifically,
it is shown that whenever $V$ is non-positive, not identically zero and bounded with compact support,
then $H_{\alpha}$ has a ground state $\varphi_0$ corresponding to the negative eigenvalue $\lambda_0$
if and only if $\pro{X}$ is recurrent, i.e., if $d=1$ and $\alpha \in [1,2)$.
\item[\quad (2)]
\emph{Uniqueness:}
Recall that the non-negative integer ${\mathfrak m}(\lambda_0) = \dim \ker (H_\alpha-\lambda_0)$ is
the multiplicity of the ground state, and whenever ${\mathfrak m}(\lambda_0)=1$, the ground state is said
to be unique.
If $V$ is a Kato-decomposable potential, then $T_t f(x)=\int_{\Rd} u(t,x,y) f(y) dy > 0$ for every positive
$f\in L^2(\R^d)$ by Lemma \ref{lm:kernel} (4), thus the operator $T_t$ is positivity improving, $\forall
t>0$. Then the Perron-Frobenius theorem \cite{bib:RS} implies that ${\mathfrak m}(\lambda_0)=1$ and
$\varphi_0$ has a strictly positive version whenever it exists. 
\end{trivlist}
}
\end{remark}

By similar arguments as in the proof of Lemma \ref{lm:kernel} (3) we can show that $T_t(L^\infty(\Rd)) \subset
C_b(\Rd)$. Since $T_t \varphi_0(x) = \int_{\R^d} u(t,x,y) \varphi_0(x) dx = e^{-\lambda_0t} \varphi_0(x)$
and the operator $T_t:L^2(\Rd) \rightarrow L^\infty(\Rd)$ is bounded, $\varphi_0$ is a continuous and
bounded function. We denote the spectral gap of the operator $H_{\alpha}$ by $\Lambda :=
\inf (\Spec H_{\alpha} \setminus \left\{\lambda_0\right\}) - \lambda_0$. We quote the following well-known lemma
as it will be used below (for a proof see  \cite{bib:BL}).
\begin{lemma}
\label{lm:proj}
For all $t>2$
$$
\sup_{x,y \in \Rd} |u(t,x,y) - e^{-\lambda_0 t}\varphi_0(x)\varphi_0(y)| \leq
C_V e^{-(\Lambda+\lambda_0) t}.
$$
\end{lemma}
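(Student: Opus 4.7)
The plan is to exploit the spectral gap via the $L^2$ spectral decomposition of the self-adjoint semigroup $T_t=e^{-tH_\alpha}$, and to use the strong smoothing property $T_1:L^2(\Rd)\to L^\infty(\Rd)$ (which follows from the $L^p$--$L^q$ boundedness cited after Definition \ref{fSchKato}) to upgrade an operator norm estimate to a pointwise kernel estimate. Let $P_0$ denote the rank-one projection $P_0 f = \langle f,\varphi_0\rangle\,\varphi_0$ onto the ground state. Assumption \ref{gs} together with the Perron--Frobenius argument recalled in the excerpt gives that $\lambda_0$ is a simple isolated eigenvalue, so the spectral theorem applied to the self-adjoint operator $H_\alpha$ yields
\begin{equation*}
\|T_s - e^{-\lambda_0 s}P_0\|_{L^2\to L^2}\;\le\; e^{-(\lambda_0+\Lambda)s}, \qquad s>0.
\end{equation*}

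Next I would write the kernel identity for $t>2$ by using the semigroup property $T_t = T_1\, T_{t-2}\, T_1$ and the intertwining $T_1\varphi_0=e^{-\lambda_0}\varphi_0$, which gives
\begin{equation*}
e^{-\lambda_0 t}\varphi_0(x)\varphi_0(y)=\int\!\!\int u(1,x,z_1)\bigl[e^{-\lambda_0(t-2)}\varphi_0(z_1)\varphi_0(z_2)\bigr] u(1,z_2,y)\,dz_1 dz_2.
\end{equation*}
Subtracting this from the Chapman--Kolmogorov expression for $u(t,x,y)$, I obtain
\begin{equation*}
u(t,x,y)-e^{-\lambda_0 t}\varphi_0(x)\varphi_0(y) = \bigl\langle u(1,x,\cdot),\,(T_{t-2}-e^{-\lambda_0(t-2)}P_0)\,u(1,\cdot,y)\bigr\rangle_{L^2}.
\end{equation*}
Here the slices $u(1,x,\cdot)$ belong to $L^2(\Rd)$ with an $L^2$-norm bounded uniformly in $x$: indeed, the boundedness of $T_1:L^2\to L^\infty$ together with symmetry of the kernel gives, by duality, $\sup_{x}\|u(1,x,\cdot)\|_2\le \|T_1\|_{L^2\to L^\infty}=:M_V<\infty$.

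Finally I would apply Cauchy--Schwarz together with the spectral-gap operator norm bound above (with $s=t-2$) to conclude
\begin{equation*}
|u(t,x,y)-e^{-\lambda_0 t}\varphi_0(x)\varphi_0(y)|\;\le\; M_V^2\, e^{-(\lambda_0+\Lambda)(t-2)}\;=\;C_V\, e^{-(\lambda_0+\Lambda)t},
\end{equation*}
with $C_V=M_V^2 e^{2(\lambda_0+\Lambda)}$, uniformly in $x,y\in\Rd$, which is exactly the claim. The only potential obstacle is verifying that the two $T_1$ factors really do upgrade the $L^2$ operator bound to a uniform pointwise kernel bound; this is handled cleanly by the Dunford--Pettis-type observation that $T_1:L^2\to L^\infty$ bounded is equivalent to $\operatorname{ess\,sup}_x\|u(1,x,\cdot)\|_2<\infty$, and that bound together with the symmetry $u(1,x,y)=u(1,y,x)$ (Lemma \ref{lm:kernel}(2)) handles both factors in the bilinear pairing.
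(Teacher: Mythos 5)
Your proof is correct and follows essentially the same route as the paper's (which it actually just cites from Betz--L\H{o}rinczi): factor $T_t = T_1\, T_{t-2}\, T_1$, bound $\|T_{t-2}-e^{-\lambda_0(t-2)}P_{\varphi_0}\|_{L^2\to L^2}$ by $e^{-(\Lambda+\lambda_0)(t-2)}$ via the spectral theorem, and absorb the two outer $T_1$ factors using their $L^2\to L^\infty$ smoothing (equivalently, $\|T_1\|_{1,2}\cdot\|T_1\|_{2,\infty}$ in the operator-norm chain). Your bilinear-pairing formulation of the kernel subtraction is just a slightly more explicit rendering of the same $\|\cdot\|_{1,\infty}$ estimate.
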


\subsection{Compactness of $T_t$}
\noindent
When for every $t > 0$ the operators $T_t$ are compact, the spectrum of $T_t$ is discrete. The
corresponding eigenfunctions $\varphi_n$ satisfy $T_t \varphi_n = e^{-\lambda_n t} \varphi_n$,
where $\lambda_0 < \lambda_1 \le \lambda_2 \le \ldots \to \infty$.
All $\varphi_n$ are bounded continuous functions, and each $\lambda_n$ has finite multiplicity.
Whenever $V$ is non-negative, $\lambda_0 > 0$, however, if $V$ has no definite sign, then it may
happen that $\lambda_0 \leq 0$. In what follows this more general case will be considered.


\begin{lemma}
\label{lm:compact}
Let $V$ be a Kato-decomposable potential. If $V(x) \to \infty$ as $|x| \to \infty$, then for all $t > 0$
the operators $T_t$ are compact.
\end{lemma}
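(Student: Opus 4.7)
The plan is to reduce compactness of $T_t$ to a compact embedding of the quadratic form domain of $H_\alpha$. After subtracting a constant (which multiplies $T_t$ by a positive scalar and does not affect compactness), I may assume $V\geq 1$. Standard Kato-class theory (see \cite{bib:HIL}) identifies the Feynman--Kac operator of Theorem~\ref{FK} with the self-adjoint operator associated to the closed, positive quadratic form
$$
\mathcal{E}(f,f) = \|(-\Delta)^{\alpha/4} f\|_2^2 + \int_{\R^d} V(x) |f(x)|^2 \, dx
$$
on $Q := \Dom((-\Delta)^{\alpha/4}) \cap L^2(V\,dx)$, equipped with the graph norm $\|f\|_Q^2 = \mathcal{E}(f,f) + \|f\|_2^2$. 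If the embedding $Q \hookrightarrow L^2(\R^d)$ is compact, then $(H_\alpha+1)^{-1/2}$ is compact on $L^2$, hence so is $(H_\alpha+1)^{-1}$; therefore $\Spec_{\mathrm{ess}}(H_\alpha) = \emptyset$, and by functional calculus every $T_t = e^{-tH_\alpha}$ is compact.

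For the embedding, let $(f_n) \subset Q$ satisfy $\|f_n\|_Q \leq 1$. Given $\varepsilon > 0$, the hypothesis $V(x)\to\infty$ furnishes $R > 0$ with $V(x) \geq 1/\varepsilon$ for $|x| > R$, and therefore
$$
\int_{|x|>R} |f_n(x)|^2 \, dx \leq \varepsilon \int V|f_n|^2 \, dx \leq \varepsilon,
$$
giving uniform $L^2$-tightness at infinity. On the other hand, $\|(-\Delta)^{\alpha/4} f_n\|_2 \leq 1$ and $\|f_n\|_2 \leq 1$ imply that $(f_n)$ is bounded in the fractional Sobolev space $H^{\alpha/2}(\R^d)$, which embeds compactly into $L^2(B(0,R))$ by the fractional Rellich--Kondrachov theorem. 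A diagonal extraction over an exhaustion by balls, combined with the tightness above, yields a subsequence Cauchy in $L^2(\R^d)$.

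The only non-elementary point is the identification of the Feynman--Kac operator $H_\alpha$ with the form-sum operator, which is standard for Kato-decomposable $V$ since such $V$ is form-small relative to $(-\Delta)^{\alpha/2}$ on any bounded set. A fully probabilistic alternative would be to show directly that $T_t$ is Hilbert--Schmidt: Lemma~\ref{lm:kernel}(5) gives $u(t,x,x) = p(t,0) \, \ex^{x,0}_{x,t}[e^{-\int_0^t V(X_r)\,dr}]$, and decomposing according to whether the bridge visits the bounded sublevel set $\{V < M\}$ produces
$$
u(t,x,x) \leq p(t,0)\bigl(e^{-Mt} + \pr^{x,0}_{x,t}(\text{bridge enters } \{V<M\})\bigr),
$$
with the bridge-visit probability controlled for $|x|$ large via the stable tail bound \eqref{eq:weaksc}. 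Obtaining $\int u(t,x,x)\,dx < \infty$ then requires summing over dyadic shells with $M$ depending on $|x|$; the quantitative balancing is the delicate point when $V$ grows only slowly.
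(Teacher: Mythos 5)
Your primary argument takes a genuinely different route from the paper: a functional-analytic reduction to compactness of the form-domain embedding $Q \hookrightarrow L^2$ via tightness at infinity plus fractional Rellich, whereas the paper argues entirely on the Feynman--Kac side. The paper shows $T_t\1(x)\to 0$ as $|x|\to\infty$ by splitting on $\{\tau_{B(x,1)} \geq t\}$ and using the Schwarz inequality together with the Kato bound \eqref{eq:FKFest}, then defines operators $V_{r,t}$ with kernel $u(t,x,y)\1_{B(0,r)}(y)$, verifies each is Hilbert--Schmidt, and shows $\|T_t - V_{r,t}\| \to 0$ as $r\to\infty$. Your approach buys conceptual clarity (empty essential spectrum) at the price of importing the form-sum identification of $H_\alpha$ from \cite{bib:HIL} and the fractional Rellich theorem; the paper's is self-contained within the potential-theoretic toolkit it has already set up.

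There is, however, a genuine gap in your reduction step. A Kato-decomposable $V$ with $V(x)\to\infty$ can still have unbounded negative local singularities (for instance $V_-(x) \sim |x-x_0|^{-\beta}$ with $\beta < \alpha$, per Example \ref{ex:katoclass}(3)), so no constant shift makes $V \geq 1$ pointwise; your tightness inequality $\int_{|x|>R}|f_n|^2 \leq \varepsilon \int V|f_n|^2$ then fails because $\int V|f_n|^2$ is not controlled by $\|f_n\|_Q$. The correct fix is to invoke that $V_- \in \cK^\alpha$ is \emph{infinitesimally} form-bounded relative to $(-\Delta)^{\alpha/2}$ (globally, not ``on any bounded set'' as you write), so that $\mathcal E(f,f)+C\|f\|_2^2$ is comparable to $\|(-\Delta)^{\alpha/4}f\|_2^2 + \int V_+|f|^2\,dx + \|f\|_2^2$; the tightness argument then runs with $V_+$ in place of $V$. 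Your probabilistic alternative also has a real issue you half-acknowledge: $T_t$ need not be Hilbert--Schmidt when $V$ grows slowly (for $V\sim C\log|x|$ and small $t$, one expects $T_t\1(x)\sim |x|^{-Ct}$, and then $\int u(2t,x,x)\,dx$ diverges). The paper avoids this entirely by never claiming $T_t$ is HS; truncating the kernel only in the $y$-variable and taking a norm limit requires only $\sup_{|y|>r}T_t\1(y) \to 0$, not integrability of $T_t\1$.
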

\begin{proof}
For any $x \in \R^d$ denote $D:=B(x,1)$. Let $t > 0$ be fixed. We have
\begin{align*}
T_t\1(x)
& =
\ex^x[e_V(t)] = \ex^x\left[\tau_D \geq t;e^{-\int_0^t V(X_s) ds} \right]
+ \ex^x\left[\tau_D < t;e^{-\int_0^t \left(V_{+}(X_s)-V_{-}(X_s)\right) ds}\right] \\
& \leq
e^{-t \inf_{y \in D} V(y)} + \ex^x\left[e^{-\int_0^{\tau_D} V_{+}(X_s) ds}
e^{\int_0^t V_{-}(X_s) ds} \right] \\
& \leq
e^{-t \inf_{y \in D} V(y)} + \left(\ex^x\left[e^{-\int_0^{\tau_D} 2V_{+}(X_s) ds}\right]\right)^{1/2}
\left(\ex^x\left[e^{\int_0^t 2V_{-}(X_s) ds} \right]\right)^{1/2} \\
& \leq
e^{-t\inf_{y\in D} V(y)}+C_{V,t}\left(\ex^0\left[e^{-2\inf_{y\in D}V(y)\tau_{B(0,1)}}\right]\right)^{1/2}
\end{align*}
by Schwarz inequality. Since $V(x) \rightarrow \infty$ as $|x| \rightarrow \infty$,
$\lim_{|x| \rightarrow \infty}T_t\1(x)=0$ follows.

Let now $(V_{r,t})$, $r >0$, be the family of operators given by the kernels $v_r(t,x,y)=
u(t,x,y)\1_{B(0,r)}(y)$, i.e., $V_{r,t}f(x) = \int_{\R^d}v_r(t,x,y)f(y)dy$, $f \in L^2(\R^d)$. We have
\begin{align*}
\int_{\R^d} \int_{\R^d} (v_r(t,x,y))^2 dxdy
& = \int_{B(0,r)} \int_{\R^d} (u(t,x,y))^2 dxdy \\
& \leq C_{V,t} \int_{B(0,r)} T_t\1(y)dy \leq C_{V,t} e^{C^{(0)}_V + C^{(1)}_V t}  |B(0,r)| < \infty.
\end{align*}
Hence $V_{r,t}$ is a Hilbert-Schmidt operator, thus compact. Furthermore, by Schwarz inequality
\begin{align*}
\left\|T_tf - V_{r,t}f \right\|_2^2
&=
\int_{\R^d} \left|\int_{B(0,r)^c}u(t,x,y)f(y)dy\right|^2 dx \\
&\leq
\int_{\R^d} \int_{B(0,r)^c}u(t,x,y)dy \int_{B(0,r)^c}u(t,x,y)\left|f(y)\right|^2dy dx \\
&\leq
e^{C^{(0)}_V + C^{(1)}_V t} \int_{\R^d} \int_{B(0,r)^c}u(t,x,y)\left|f(y)\right|^2dy dx \\
&=
e^{C^{(0)}_V + C^{(1)}_V t} \int_{B(0,r)^c}\int_{\R^d} u(t,x,y) dx\left|f(y)\right|^2 dy \\
&\leq
C_{V,t} \left\|f\right\|_2^2 \sup_{y \in B(0,r)^c} T_t\1(y).
\end{align*}
Since $\lim_{|x| \rightarrow \infty}T_t\1(x)=0$, it follows that $T_t$ can be approximated by compact
operators $V_{r,t}$ in operator norm. Thus $T_t$ is compact.
\end{proof}

\subsection{Decay of the ground state}
\noindent
Notice that the condition $V(x) \rightarrow \infty$ as $|x| \rightarrow \infty$ implies that
$\supp(V_{-})$ is a bounded set and $V = V_{+} \geq 0$ on $(\supp(V_{-}))^c$. Thus we are able to
make use of the results of Section 3.1 for $V$ and $D = B(x,r)$ such that $D \cap \supp(V_{-}) =
\emptyset$.

\begin{lemma}
\label{lm:cruc}
Let $V$ be a Kato-decomposable potential such that $V(x) \rightarrow \infty$ as $|x| \rightarrow \infty$.
Put $D=B(x,1)$. Let $f$ be a non-negative bounded function on $\R^d$ with the property
\begin{align*}
f(x) & \le  C^{(1)}_V v_D(x)\left(\sup_{y \in B\left(x,|x|/2\right)}f(y)
+ \int_{B\left(x,|x|/2\right)^c} f(z)|z-x|^{-d-\alpha}dz \right) \,
\end{align*}
for any $|x| \geq 3$ such that  $D \cap \supp(V_{-}) = \emptyset$. Then
$$
f(x) \le C^{(2)}_V v_D(x)|x|^{-d-\alpha}
$$
for all $|x| \geq 3$ such that $D \cap \supp(V_{-}) = \emptyset$.
\end{lemma}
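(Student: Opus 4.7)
The plan is to bootstrap polynomial decay of $f$ by iterating the given inequality, exploiting the fact that $v_D(x) \to 0$ as $|x| \to \infty$ (a consequence of Lemma~\ref{lm:Greenop} and $V\to\infty$).

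First, since $V \to \infty$, the set $\supp V_{-}$ is bounded, and there exists $R_0 \geq 3$ with $V \geq 1$ on $B(x,1)$ and $B(x,1) \cap \supp V_{-} = \emptyset$ for all $|x| \geq R_0$. Lemma~\ref{lm:Greenop} then gives $v_D(x) \leq 1/\inf_{y \in B(x,1)} V(y) \to 0$. Hence, for any prescribed $\beta > 0$ there exists $R(\beta) \geq R_0$ such that $C^{(1)}_V v_D(x) \leq 2^{-\beta-1}$ whenever $|x| \geq R(\beta)$.

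Second, I would prove by induction that $f(y) \leq C_k(1+|y|)^{-\beta_k}$ for $|y| \geq R_k$, where $\beta_0 = 0$ (boundedness of $f$ with bound $M_0$) and $\beta_{k+1} = \min(\beta_k+\alpha,\, d+\alpha)$. For the inductive step, split
\[
\int_{B(x,|x|/2)^c} f(z)|z-x|^{-d-\alpha}dz = \int_{\{|z|\leq R_k\}} + \int_{\{|z|> R_k\}}.
\]
For $|x| \geq 2R_k$, the first piece is bounded by $CM_0 R_k^d |x|^{-d-\alpha}$ (since $|z-x|\geq|x|/2$ holds automatically), and the second by $C|x|^{-\beta_{k+1}}$ via estimate~\eqref{eq:cruest} applied to $f(z) \leq C_k(1+|z|)^{-\beta_k}$. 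With $M(r) := \sup_{|y|\geq r} f(y)$, the bound $\sup_{y \in B(x,|x|/2)} f(y) \leq M(|x|/2)$ (since $B(x,|x|/2) \subset \{|y| \geq |x|/2\}$) together with the hypothesis then give, for $|x| \geq r \geq R_{k+1} := \max(R(\beta_{k+1}),\,2R_k)$,
\[
f(x) \leq C^{(1)}_V v_D(x)\,M(|x|/2) + C'|x|^{-\beta_{k+1}}.
\]
Taking the supremum over $|x|\geq r$ yields the dyadic recursion $M(r)\leq \epsilon M(r/2) + C'r^{-\beta_{k+1}}$ with $\epsilon \leq 2^{-\beta_{k+1}-1}$. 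Iterating $n = \lfloor\log_2(r/R_{k+1})\rfloor$ times, the geometric sum $\sum_{j}(\epsilon\, 2^{\beta_{k+1}})^{j}$ converges (ratio $\leq 1/2$), while $\epsilon^n M(r/2^n) \leq M_0 (R_{k+1}/r)^{\log_2(1/\epsilon)} \leq M_0 R_{k+1}^{\beta_{k+1}} r^{-\beta_{k+1}}$ since $\log_2(1/\epsilon)\geq \beta_{k+1}+1$. Therefore $M(r) \leq C_{k+1}r^{-\beta_{k+1}}$ for $r\geq R_{k+1}$.

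Third, after finitely many iterations (once $k\alpha \geq d$) we have $\beta_{k+1} = d+\alpha$, so $f(y) \leq C_*(1+|y|)^{-d-\alpha}$ for $|y| \geq R_*$. One final application of the hypothesis at $x$, this time retaining $v_D(x)$, uses $\sup_{y\in B(x,|x|/2)} f(y) \leq C(|x|/2)^{-d-\alpha}$ and $\int \leq C|x|^{-d-\alpha}$ (from~\eqref{eq:cruest} with $\gamma = d+\alpha$, which gives $\gamma'=d+\alpha$), yielding
\[
f(x) \leq C^{(2)}_V v_D(x)|x|^{-d-\alpha},\quad |x|\geq R_*.
\]
For the residual compact range $|x| \in [3, R_*]$ satisfying $D \cap \supp V_{-} = \emptyset$, the conclusion is obtained by enlarging the constant, using boundedness of $f$ together with the strict positivity of $v_D$ on this compact set.

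The main obstacle is the bootstrap step: a naive iteration of the hypothesis would stall, because the sup term inherits the previous decay rate and $v_D(x)$ is only a priori bounded. The resolution is to use $v_D(x)\to 0$ quantitatively: one can make the dyadic contraction factor $\epsilon = C^{(1)}_V \sup_{|x|\geq R} v_D(x)$ strictly smaller than $2^{-\beta_{k+1}}$ by enlarging the threshold $R_{k+1}$, which lets the integral contribution---whose rate improves by $\alpha$ per iteration via \eqref{eq:cruest}---dictate the updated decay and push the bootstrap all the way to exponent $d+\alpha$.
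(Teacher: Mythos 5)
Your proposal follows essentially the same strategy as the paper's (commented-out) proof and as \cite[Lemma 5]{bib:KaKu}: start from the trivial bound $f\le M_0$ and bootstrap the polynomial decay rate by $\alpha$ at each step via \eqref{eq:cruest}, using $v_D(x)\to 0$ (from Lemma~\ref{lm:Greenop} and $V\to\infty$) to make the self-referential supremum term contract. The technical packaging of the key step is slightly different — you run a direct dyadic contraction for $M(r)=\sup_{|y|\ge r}f(y)$, whereas the paper argues by contradiction with the weighted supremum $g(s)=\sup_{|y|\le s}|y|^{\gamma'}f(y)$ and shows it must stay bounded — but these are two equivalent ways of closing the same loop, not a genuinely different route.

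One small technical point you should repair: the estimate \eqref{eq:cruest} is stated only for $\gamma\neq d$, yet your exponent sequence $\beta_k=k\alpha$ (before saturation) can hit $d$ exactly when $d/\alpha\in\N$ (e.g. $\alpha=1$, $d=2$ gives $\beta_2=d$), in which case the inductive step is not literally licensed. The paper handles this by inserting an intermediate exponent: when an iteration produces $\gamma'=d$, it replaces $\gamma$ by $d-\alpha/2$ in the next step (so $\gamma'$ becomes $d+\alpha/2$, still $\neq d$) and then takes one more step to reach $d+\alpha$. Your argument works with the same patch — redefine $\beta_{k+1}=\min(\beta_k+\alpha,d+\alpha)$ but replace any value equal to $d$ by $d-\alpha/2$ — but as written there is a gap for those parameter pairs. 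The remaining points (the split of the integral at radius $R_k$, the geometric-sum estimate $\sum_j(\epsilon 2^{\beta_{k+1}})^j\le 2$, the final reapplication of the hypothesis to reinstate the $v_D(x)$ factor, and absorbing the compact range $3\le|x|\le R_*$ into the constant) are all correct.
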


\begin{proof}
This can be obtained by an adaptation of the proof of \cite[Lemma 5]{bib:KaKu}.

\end{proof}

For $\eta>0$ denote $V_{\eta} = V + \eta$ and
$$
v_{D,\eta}(x) = \ex^x \left[\int_0^{\tau_D} e_{V_{\eta}}(t) dt\right].
$$
This implies that $v_{D,\eta} = G^{V_\eta}_D \1$. The following theorem gives sharp ground state estimates for the Kato-decomposable potential $V =V_{+} - V_{-}$ outside the support of $V_{-}$.

\begin{theorem}[\textbf{Ground state estimates}]
\label{th:eig}
Let $D:=B(x,1)$ and $V$ be a Kato-decomposable potential such that $V(x) \rightarrow \infty$ as $|x|
\rightarrow \infty$. Then for every $\eta \geq 0$ such that $\eta + \lambda_0>0$, there exist constants
$C_{V,\eta}^{(1)}$ and $C_{V,\eta}^{(2)}$ such that if $D \cap \supp(V_{-})=\emptyset$, then
\begin{align}
\label{eq:eig0}
\frac{C_{V,\eta}^{(1)}v_{D,\eta}(x)}{(1+|x|)^{d+\alpha}}
& \le \, \varphi_0(x) \le \frac{C_{V,\eta}^{(2)} v_{D,\eta}(x)}{(1+|x|)^{d+\alpha}}
\end{align}
for every $x \in \R^d$.
\end{theorem}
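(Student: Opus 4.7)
The plan is to derive a Poisson-type representation of $\varphi_0$ adapted to the shifted potential $V_\eta$, and then extract both bounds from it by means of the Ikeda-Watanabe formula \eqref{eq:IWF}. First I would show that $G^{V_\eta}\varphi_0(x) = \varphi_0(x)/(\eta+\lambda_0)$: since $T_t\varphi_0 = e^{-\lambda_0 t}\varphi_0$ holds pointwise (as $\varphi_0$ is bounded and continuous and $T_t$ maps $L^\infty$ into $C_b$), multiplying by $e^{-\eta t}$ gives $\ex^x[e_{V_\eta}(t)\varphi_0(X_t)] = e^{-(\eta+\lambda_0)t}\varphi_0(x)$, and integration in $t$ (using $\eta+\lambda_0>0$ and Fubini, valid because $\varphi_0 \ge 0$) yields the identity. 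Applying the strong Markov decomposition \eqref{eq:pot1} at $\tau_D$ and observing that $\supp((V+\eta)_-)\subseteq \supp(V_-)$, so $V_\eta \ge 0$ on $D$, I obtain the key representation
$$
\varphi_0(x) = (\eta+\lambda_0)\, G^{V_\eta}_D\varphi_0(x) + \ex^x[e_{V_\eta}(\tau_D)\varphi_0(X_{\tau_D})].
$$

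For the upper bound, I would estimate both terms in this representation for $|x|\ge 3$ satisfying $D\cap \supp(V_-)=\emptyset$. The first term is at most $(\eta+\lambda_0)(\sup_{y\in D}\varphi_0(y))\, v_{D,\eta}(x)$, and so is bounded by $C\,v_{D,\eta}(x)\sup_{y\in B(x,|x|/2)}\varphi_0(y)$, since $D\subseteq B(x,|x|/2)$. For the second term I would invoke \eqref{eq:IWF} with potential $V_\eta$ and split the inner jump integral at $B(x,|x|/2)$: the near part $D^c\cap B(x,|x|/2)$ is dominated by $\sup_{y\in B(x,|x|/2)}\varphi_0(y)$ times an integral of the jump kernel, while on the far part $|z-y|\asymp |z-x|$ uniformly in $y\in D$, giving a bound by $\int_{B(x,|x|/2)^c}\varphi_0(z)|z-x|^{-d-\alpha}dz$. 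Both contributions carry the prefactor $v_{D,\eta}(x)$, so $\varphi_0$ verifies the hypothesis of Lemma \ref{lm:cruc} with $V_\eta$ in place of $V$. That lemma then delivers $\varphi_0(x)\le C\,v_{D,\eta}(x)|x|^{-d-\alpha}$ in this range of $x$, and the remaining (bounded) range is handled by continuity and strict positivity of $\varphi_0$ and $v_{D,\eta}$.

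For the lower bound, I would discard the first nonnegative term in the representation and again apply \eqref{eq:IWF}. Fix a closed ball $K$, disjoint from $D$ whenever $|x|$ is large, on which $\varphi_0\ge c_0>0$ (such $K$ exists by strict positivity and continuity of $\varphi_0$); then for $y\in D$ and $z\in K$, $|z-y|\le C(1+|x|)$, so
$$
\int_{D^c}\frac{\varphi_0(z)}{|z-y|^{d+\alpha}}dz \ge \int_K\frac{\varphi_0(z)}{|z-y|^{d+\alpha}}dz \ge \frac{c}{(1+|x|)^{d+\alpha}}.
$$
Integrating against $G^{V_\eta}_D(x,y)$ and using $\int_D G^{V_\eta}_D(x,y)dy = v_{D,\eta}(x)$ gives $\varphi_0(x)\ge C\, v_{D,\eta}(x)(1+|x|)^{-d-\alpha}$ for large $|x|$; a continuity argument handles the compact range of $x$. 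The principal obstacle is the bootstrap hidden inside Lemma \ref{lm:cruc}: the inequality fed into it is self-referential in $\varphi_0$, so starting from $\varphi_0\in L^\infty$ one must iterate the a priori estimate $\varphi_0(x)\le C(1+|x|)^{-\gamma}$, each round improving the exponent by $\alpha$ via \eqref{eq:cruest}, until the critical rate $d+\alpha$ is reached; the fact that the lemma preserves the $v_{D,\eta}$ prefactor throughout this iteration is what makes the final two-sided bound sharp.
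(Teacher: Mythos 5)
Your proposal reproduces the paper's proof essentially step by step: you derive the same resolvent identity $\varphi_0 = (\eta+\lambda_0)\,G^{V_\eta}\varphi_0$ by pointwise integration of the Feynman-Kac identity, decompose it over $\tau_D$ via \eqref{eq:pot1}, split the exit position of $X_{\tau_D}$ at the intermediate radius $|x|/2$ using \eqref{eq:IWF} and the comparability $u_{D,\eta}\asymp v_{D,\eta}$ from Lemma \ref{lm:est}, feed the resulting self-referential inequality into the bootstrap Lemma \ref{lm:cruc}, and prove the lower bound by restricting the jump integral to a fixed ball on which $\varphi_0$ is bounded below. This matches the paper's argument in both structure and the lemmas invoked.
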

\begin{proof}
Take $\eta \geq 0$ such that $\lambda_0 + \eta > 0$ (if $\lambda_0>0$, we may take $\eta=0$). Notice that on
integration in the equality
$$
e^{-(\lambda_0 + \eta)t} \varphi_0(x) = e^{-\eta t} T_t \varphi_0(x) = \ex^x[e_{V_{\eta}}(t)\varphi_0(X_t)]
$$
we obtain
$$
\varphi_0(x) = (\lambda_0 + \eta) G^{V_{\eta}}\varphi_0(x).
$$
By \eqref{eq:pot1} applied to $D' = \Rd$ and $f = \varphi_0$ we furthermore get
\begin{align}
\label{eq:refeta}
\varphi_0(x)
& = (\lambda_0 + \eta) G^{V_\eta}_D \varphi_0(x) + \ex^x[e_{V_{\eta}}(\tau_D) \varphi_0(X_{\tau_D})],
\quad x \in D.
\end{align}

First we prove the upper bound. Let $|x| < 3$ be such that $D\cap \supp(V_{-}) = \emptyset$. By
\eqref{eq:refeta} and \eqref{eq:est} we have
\begin{align*}
\varphi_0(x) \leq \left\|\varphi_0\right\|_\infty ((\lambda_0 + \eta) v_{D,\eta}(x) + u_{D,\eta}(x) )
\leq  C_{V,\eta} v_{D,\eta}(x) (1+|x|)^{-d-\alpha}.
\end{align*}
Now let $|x| \ge 3$ be such that $D\cap \supp(V_{-}) = \emptyset$. With $r=\frac{|x|}{2}$, by
\eqref{eq:refeta} and \eqref{eq:IWF} we have
\begin{align*}
\varphi_0(x)
& = (\lambda_0 + \eta) \int_D G^{V_\eta}_D(x,y)\varphi_0(y)dy + \ex^x[X_{\tau_D} \in D^c \cap B(x,r);
e_{V_{\eta}}(\tau_D) \varphi_0(X_{\tau_D})]  \\
& \ \ \ \ + \ex^x[X_{\tau_D} \in B(x,r)^c; e_{V_{\eta}}(\tau_D) \varphi_0(X_{\tau_D})]     \\
& \le
(\lambda_0+\eta)v_{D,\eta}(x)\sup_{y \in B(x,r)}\varphi_0(y)+u_{D,\eta}(x)\sup_{y \in B(x,r)}\varphi_0(y)\\
& \ \ \ \ + \cA \int_D G^{V_\eta}_D(x,y) \int_{B(x,r)^c} \varphi_0(z) |z-y|^{-d-\alpha}dzdy.
\end{align*}
By \eqref{eq:est} furthermore
\begin{align*}
\varphi_0(x)
& \le
(\lambda_0 + \eta) v_{D,\eta}(x) \sup_{y \in B(x,r)}\varphi_0(y)+ C v_{D,\eta}(x) \sup_{y \in B(x,r)}
\varphi_0(y)\\
& \ \ \ \ + C \int_D G^{V_\eta}_D(x,y)dy \int_{B(x,r)^c} \varphi_0(z) |z-x|^{-d-\alpha}dz \\
& \le C_{V,\eta} v_{D,\eta}(x)\left(\sup_{y\in B(x,r)}\varphi_0(y)+\int_{B(x,r)^c}\varphi_0(z)
|z-x|^{-d-\alpha}dz\right)
\end{align*}
follows. On an application of Lemma \ref{lm:cruc} to $f=\varphi_0$ we obtain $\varphi_0(x) \leq C_{V,\eta}
v_{D,\eta}(x)|x|^{-d-\alpha}$ for $|x| \geq 3$ and $D\cap \supp(V_{-}) = \emptyset$. This gives the claimed
upper bound.

To show the lower bound we use \eqref{eq:refeta} again. Let $|x| \leq 2$; then
$$
\varphi_0(x) \geq (\eta + \lambda_0) v_{D,\eta}(x) \inf_{y \in B(0,3)} \varphi_0(y)
\geq C_{V,\eta} v_{D,\eta}(x) (1 + |x|)^{-d-\alpha}.
$$
Take now $|x| > 2$. By (\ref{eq:refeta}) and \eqref{eq:IWF} we have
\begin{align*}
\varphi_0(x)
& \geq
\ex^x[e_{q_{\eta}}(\tau_D) \varphi_0(X_{\tau_D})] =
C \int_D G^{V_\eta}_D(x,y) \int_{D^c} \varphi_0(z) |z-y|^{-d-\alpha}dzdy  \\
& \geq
C \int_D G^{V_\eta}_D(x,y) \int_{B(0,1)} \varphi_0(z) |z-y|^{-d-\alpha}dzdy \geq
C_V v_{D,\eta}(x)|x|^{-d-\alpha}.
\end{align*}
\end{proof}

By using Lemma \ref{lm:Greenop}, we can derive sharp estimates for $v_{D,\eta}(x)$ in many cases of
sufficiently regular potentials. The following corollary gives explicit two-sided bounds on the ground
state for potentials subject to an extra condition.
\begin{corollary}
\label{th:eig1}
Let $V$ be a Kato-decomposable potential such that $V(x) \rightarrow \infty$ as $|x| \rightarrow \infty$.
Moreover, let $A \subset \left\{x \in \R^d: V(y) \geq 1 \ \text{for} \ y \in B(x,1)\right\}$, and $M_{V,A}
\geq 1$ be a constant such that for every $x \in A$ we have
\begin{align}
\label{eq:compcond}
 V(z) & \le M_{V,A} V(y), \quad z, y \in B(x,1).
\end{align}
Then there exist constants $C_{V,A}^{(1)}$ and $C_{V,A}^{(2)}$ such that for
all $x \in A$ the estimates
\begin{align}
\label{eq:eig1}
\frac{C_{V,A}^{(1)}}{V(x)(1+|x|)^{d+\alpha}}
& \le \, \varphi_0(x) \le \frac{C_{V,A}^{(2)}}{V(x)(1+|x|)^{d+\alpha}}
\end{align}
hold.
\end{corollary}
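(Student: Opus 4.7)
The plan is to apply Theorem \ref{th:eig} and reduce the corollary to pointwise two-sided estimates on $v_{D,\eta}(x) = G^{V_\eta}_D \1(x)$ for $D = B(x,1)$ and a conveniently chosen shift $\eta \geq 0$.

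First I would fix $\eta \geq 0$ with $\eta + \lambda_0 > 0$ (taking $\eta = 0$ if $\lambda_0 > 0$, and $\eta$ large enough otherwise). For any $x \in A$, the definition of $A$ gives $V(y) \geq 1$ for all $y \in D = B(x,1)$; in particular $D \cap \supp(V_-) = \emptyset$, so Theorem \ref{th:eig} applies and yields
\begin{equation*}
\frac{C^{(1)}_{V,\eta} \, v_{D,\eta}(x)}{(1+|x|)^{d+\alpha}} \;\leq\; \varphi_0(x) \;\leq\; \frac{C^{(2)}_{V,\eta} \, v_{D,\eta}(x)}{(1+|x|)^{d+\alpha}}.
\end{equation*}
It therefore suffices to establish $v_{D,\eta}(x) \asymp V(x)^{-1}$ uniformly in $x \in A$.

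For this, the key input is Lemma \ref{lm:Greenop} applied to the strictly positive bounded potential $V_\eta = V + \eta$ on $D$. The upper bound gives
\begin{equation*}
v_{D,\eta}(x) \;\leq\; \frac{1}{\inf_{y \in D}V_\eta(y)} \;\leq\; \frac{1}{\inf_{y \in D}V(y)},
\end{equation*}
and the comparability hypothesis \eqref{eq:compcond} yields $V(x) \leq M_{V,A} V(y)$ for every $y \in D$, so $\inf_{y \in D} V(y) \geq V(x)/M_{V,A}$, giving $v_{D,\eta}(x) \leq M_{V,A}/V(x)$. For the lower bound, Lemma \ref{lm:Greenop} gives
\begin{equation*}
v_{D,\eta}(x) \;\geq\; \bigl(1 - e^{-\sup_{y \in D} V_\eta(y)}\bigr)\,\frac{\pr^x(\tau_D > 1)}{\sup_{y \in D} V_\eta(y)}.
\end{equation*}
Since $\sup_{y \in D}V_\eta(y) \geq 1$, the first factor is bounded below by $1 - e^{-1}$. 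By translation invariance of the $\alpha$-stable process, $\pr^x(\tau_{B(x,1)} > 1) = \pr^0(\tau_{B(0,1)} > 1)$, a strictly positive constant independent of $x$. Using the reverse direction of \eqref{eq:compcond}, namely $V(y) \leq M_{V,A} V(x)$ for $y \in D$, and the fact that $V(x) \geq 1$ on $A$ (so that the additive constant $\eta$ can be absorbed as $\eta \leq \eta V(x)$), we obtain $\sup_{y \in D} V_\eta(y) \leq (M_{V,A}+\eta) V(x)$. Combining these two inequalities produces $v_{D,\eta}(x) \geq c_{V,A}/V(x)$.

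Inserting the resulting two-sided bound $v_{D,\eta}(x) \asymp 1/V(x)$ into the estimate from Theorem \ref{th:eig} yields \eqref{eq:eig1}. The only subtlety worth flagging is the handling of $\eta$ in the lower bound on $v_{D,\eta}$: the naive estimate has the additive $\eta$ in the denominator rather than a multiplicative constant, and it is precisely the built-in property $V \geq 1$ on $B(x,1)$ for $x \in A$ that lets us convert this into the clean $1/V(x)$ behaviour needed to match the upper bound.
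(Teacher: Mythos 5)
Your proof is correct and follows essentially the same route as the paper: apply Theorem \ref{th:eig}, then use Lemma \ref{lm:Greenop} together with the comparability hypothesis \eqref{eq:compcond} to show $v_{D,\eta}(x) \asymp 1/V(x)$ uniformly on $A$. In fact your handling of the factor $1 - e^{-\sup_{y\in D}V_\eta(y)}$ via $V \geq 1$ on $B(x,1)$ is slightly more careful than the paper's displayed constant $M' = M^{-1}(1-e^{-M^{-1}\eta})\pr^0(\tau_{B(0,1)}>1)$, which degenerates when $\lambda_0 > 0$ forces $\eta = 0$; your argument covers that case cleanly.
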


\begin{proof}
First we fix $\eta$ in Theorem \ref{th:eig}. If $\lambda_0 > 0$ put $\eta = 0$, if $\lambda_0 < 0$ put
$\eta = -2\lambda_0$. If $\lambda_0 = 0$, then we choose $\eta=1$. Fix now $x \in A$. Let $D := B(x,1)$
and $M = M_{V,A}$. Observe that by condition \eqref{eq:compcond} we have
$$
M^{-1} \eta \leq M^{-1} ( V(x) + \eta) \leq   \inf_{y \in D} V(y) + \eta \leq  \sup_{y \in D} V(y) +
\eta \leq M (V(x) + \eta).
$$
This and Lemma \ref{lm:Greenop} give
$$
 \frac{M^{'}}{V(x) + \eta} \leq v_{D,\eta}(x) \leq
\frac{M}{V(x) + \eta},
$$
with $M^{'} = M^{-1} \left(1-e^{-M^{-1} \eta}\right)\pr^0(\tau_{B(0,1)} > 1)$, which implies \eqref{eq:eig1}
as a consequence of Theorem \ref{th:eig}.
\end{proof}

\begin{example}
\rm{
We illustrate the above results on some specific cases of $V$.
\begin{itemize}
\item[(1)]
Corollary \ref{th:eig1} can be used to obtain ground state estimates for each of the following potentials:
(i) $V(x) = |x|^{2m}$, $m \in \N$, if $|x| \geq 2$, (ii) $V(x)=|x|^{\beta} \log (1+|x|)$, $\beta >0$, if
$|x| \geq e$, (iii) $V(x)=e^{\beta |x|}$, $\beta >0$, for all $x \in \R^d$, (iv) $V(x)=|x|^{-\beta}e^{|x|}$,
$0< \beta < \alpha < d$ or $0< \beta < 1 = d \leq \alpha$, provided $|x| \geq 1+1/\beta$.


\item[(2)]
Let $V(x)=\1_{\{|x|>1\}}\log|x| - \1_{\{|x| \leq 1\}} \left(|x|^{-\beta} - 1\right)$, for $0<\beta < \alpha < d$ or $0<\beta < 1 = d \leq \alpha$.
Then for $|x| \geq 1+e$
$$
\frac{C_V^{(1)}}{|x|^{d+\alpha}\log|x|} \leq \varphi_1(x) \leq \frac{C_V^{(2)}}{|x|^{d+\alpha}\log|x|}.
$$\item[(3)]
By taking $\alpha = 1$ and $m = 1$ in Example 1(i) we obtain the massless relativistic harmonic oscillator.
In the case $d=1$ the spectral properties of the operator $\sqrt{-d^2/dx^2} + x^2$ are studied in detail in
\cite{bib:LMa}. In particular, the large $x$ asymptotics is established for all the eigenfunctions, and in
particular for the ground state
\begin{eqnarray*}
\varphi_0(x) = \sqrt{\frac{2}{-a_1'}}\left(\frac{p_3(a_1')}{x^4}-\frac{p_5(a_1')}{x^6}+\ldots+
(-1)^{N}\frac{p_{2N-1}(a_1')}{x^{2N}}\right) +O\left(\frac{1}{x^{2(N+1)}}\right)
\end{eqnarray*}
is obtained, where $a_1'\simeq -3.2482$ denotes the first zero of the derivative of the Airy function $\Ai(x)$,
and $p_n, q_n$ are $n$th order polynomials defined by the recursive relations $p_{n+1}(x) = p_n'(x)+xq_n(x)$ and
$q_{n+1}(x) = p_n(x) + q_n'(x)$, with $p_0(x)\equiv 1$, $q_0(x)\equiv 0$. For odd order eigenfunctions the leading
term can be improved to order $x^{-5}$, for even order eigenfunctions it is of order $x^{-4}$ as predicted by
Corollary \ref{th:eig1}.
\end{itemize}
}
\end{example}

Our next result concerns purely negative potentials.

\begin{theorem}
\label{th:decaying}
Let $V$ be a Kato-decomposable potential such that $V_{+} \equiv 0$ and $V_{-}(x) \to 0$
as $|x| \to \infty$. Suppose that $\lambda_0 = \inf \Spec(H_{\alpha}) < 0$ is an isolated
eigenvalue. Then there exists a constant $C_V$ such that for all $x \in \R^d$
$$
\varphi_0(x) \geq \frac{C_V}{(1+|x|)^{d+\alpha}}.
$$
\end{theorem}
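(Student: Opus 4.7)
The plan is to exploit the Feynman--Kac representation of the eigenfunction equation together with the explicit lower bound on the $\alpha$-stable transition density. From $T_t \varphi_0 = e^{-\lambda_0 t}\varphi_0$ I would write, for every $t > 0$ and $x \in \R^d$,
$$
\varphi_0(x) \;=\; e^{\lambda_0 t}\,\ex^x\!\left[e_V(t)\varphi_0(X_t)\right].
$$
Since $V_+ \equiv 0$ gives $V = -V_- \leq 0$, the Feynman--Kac functional satisfies $e_V(t) \geq 1$ pointwise, so discarding it and restricting the expectation to $\{X_1 \in B(0,1)\}$ at the concrete choice $t = 1$ yields
$$
\varphi_0(x) \;\geq\; e^{\lambda_0}\int_{B(0,1)} p(1,y-x)\,\varphi_0(y)\,dy.
$$

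Next I would invoke the lower bound in \eqref{eq:weaksc}. For $|x| \geq 2$ and $y \in B(0,1)$ one has $1 \leq |y-x| \leq 1 + |x|$, so the polynomial branch of \eqref{eq:weaksc} dominates and
$$
p(1,y-x) \;\geq\; \frac{C^{-1}}{|y-x|^{d+\alpha}} \;\geq\; \frac{C^{-1}}{(1+|x|)^{d+\alpha}}.
$$
The constant $m := \int_{B(0,1)}\varphi_0(y)\,dy$ is a strictly positive finite number, since the Perron--Frobenius theorem (together with the $L^2 \to L^\infty$ mapping of $T_t$ noted just above Lemma \ref{lm:proj}) forces $\varphi_0$ to admit a strictly positive continuous version. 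Combining the two estimates gives
$$
\varphi_0(x) \;\geq\; \frac{m\, e^{\lambda_0} C^{-1}}{(1+|x|)^{d+\alpha}}, \qquad |x| \geq 2.
$$

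In the complementary regime $|x| < 2$, continuity and strict positivity of $\varphi_0$ on the compact set $\overline{B(0,2)}$ furnish a uniform positive lower bound, and this dominates $C'/(1+|x|)^{d+\alpha}$ as soon as $C'$ is chosen small enough; taking the smaller of the two constants delivers the global bound. The only non-automatic input is the pointwise positivity of $\varphi_0$ on compacts, which in the present Kato-decomposable setting is handed to us by the positivity-improving property of $T_t$. Apart from this, the hypothesis $V \leq 0$ removes the killing contribution entirely and reduces the statement to a direct stable-heat-kernel estimate, so no substantive obstacle is anticipated; the only care needed is to verify, as above, that the relevant regime $|y-x| \geq 1$ keeps one in the polynomial branch of \eqref{eq:weaksc}.
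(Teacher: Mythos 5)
Your proof is correct, and it takes a genuinely simpler route than the paper's. The paper deduces the lower bound from the same resolvent machinery it uses for Theorem \ref{th:eig}: it shifts by $\eta = -2\lambda_0$ to get a positive eigenvalue $\lambda_0 + \eta$, writes $\varphi_0 = (\lambda_0+\eta)G^{V_\eta}\varphi_0$, splits via \eqref{eq:pot1}, and then applies the Ikeda--Watanabe formula \eqref{eq:IWF} and a direct estimate on $v_{D,\eta}$ to extract the jump-kernel lower bound. You instead observe that $V\le 0$ makes the Feynman--Kac weight $e_V(t)\ge 1$, so the single-time identity $\varphi_0 = e^{\lambda_0}\ex^x[e_V(1)\varphi_0(X_1)]$ immediately dominates $e^{\lambda_0}\int_{B(0,1)}p(1,y-x)\varphi_0(y)\,dy$, and the transition-density lower bound \eqref{eq:weaksc} does the rest. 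This bypasses the Green operator, the exit-time decomposition, and the Ikeda--Watanabe formula entirely; it is shorter, more self-contained, and only needs the positivity-improving property of $T_t$ (to ensure $\int_{B(0,1)}\varphi_0 > 0$) together with \eqref{eq:weaksc}. The paper's approach has the virtue of running in parallel with the proof of the two-sided estimate of Theorem \ref{th:eig} and so reuses existing lemmas, but for this one-sided bound under $V\le 0$ your argument is the more transparent one.

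One small point of care you got right and should keep explicit: the restriction $|x|\ge 2$, $y\in B(0,1)$ forces $|y-x|\ge 1$, which is exactly what puts you in the polynomial branch of the minimum in \eqref{eq:weaksc}; without it the bound would read $C^{-1}t^{-d/\alpha}$ rather than the needed $|y-x|^{-d-\alpha}$.
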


\begin{proof}
Let first $|x|<2$. We have
$$
\varphi_0(x) \geq \inf_{y \in B(0,2)} \varphi_0(y) \geq C_{V} (1 + |x|)^{-d-\alpha}.
$$
Let now $|x| \geq 2$ and $\eta:= -2 \lambda_0 > 0$. Similarly as before, by integrating in the equality
$$
e^{- (\lambda_0 + \eta) t} \varphi_0(x) = e^{- \eta t} T_t \varphi_0(x) =
\ex^x[e_{V_{\eta}}(t)\varphi_0(X_t)],
$$
we obtain
$$
\varphi_0(x) = (\lambda_0 + \eta) G^{V_{\eta}}\varphi_0(x).
$$
Let $D:=B(x,1)$. Applying \eqref{eq:pot1} to $D' = \Rd$ and $f = \varphi_0$, and using (\ref{eq:refeta})
and \eqref{eq:IWF}, we furthermore get
\begin{align*}
\varphi_0(x)
& \geq
\ex^x[e_{V_{\eta}}(\tau_D) \varphi_0(X_{\tau_D})]
 =
C \int_D G^{V_{\eta}}_D(x,y) \int_{D^c} \varphi_0(z) |z-y|^{-d-\alpha}dzdy  \\
& \geq
C \int_D G^{V_{\eta}}_D(x,y) \int_{B(0,1)} \varphi_0(z) |z-y|^{-d-\alpha}dzdy
 \geq
C_V v_{D,\eta}(x)|x|^{-d-\alpha}.
\end{align*}
Since
$$
v_{D,\eta}(x) =
\ex^x\left[ \int_0^{\tau_D} e^{\int_0^t (V_{-}(X_s) - \eta) ds}dt\right]
\geq \ex^x\left[ \int_0^{\tau_D} e^{- \eta t}dt\right] = \frac{1-\ex^0[e^{-\eta \tau_{B(0,1)}}]}{\eta},
$$
the proof is complete.
\end{proof}

\begin{remark}
\rm{
Let $d=1$ and $\alpha \in [1,2)$. By using a martingale argument different from ours, it is possible to show
that under the same assumptions as in the theorem above $\varphi_0$ is comparable to $(1+|x|)^{-1-\alpha}$
\cite[Prop. IV.1-IV.3]{bib:CMS}.
}
\end{remark}

\begin{example}
\rm{
Let $d = 1$ and $\alpha \in [1,2)$.
\begin{itemize}
\item[(1)]
\emph{Potentials with compact support:} Let $V \not \equiv 0$ be a non-positive, bounded potential
such that $\supp V \subset [-b,b]$, where $b > 0$. Then for $x \in \R$
\begin{align*}
\label{ex:decpot}
\frac{C^{(1)}_V}{(1+|x|)^{1+\alpha}} \leq \varphi_0(x) \leq \frac{C^{(2)}_V}{(1+|x|)^{1+\alpha}}
\, .
\end{align*}
\item[(2)]
\emph{Potential well:} A special case of the above is
$$
V(x)=
\left\{
\begin{array}{ll} - a, & x \in [-b,b] \\
0,                 & x \in [-b,b]^c ,
\end{array} \right.
$$
where $a, b > 0$. Clearly, in this case the two-sided estimates in (1) above hold.
\end{itemize}
}
\end{example}

\begin{example}[\textbf{Coulomb potential}]
\rm{
A case of special interest is the semi-relativistic Coulomb potential in $d=3$, i.e., the operator
$(-\Delta + m^2)^{1/2} - m - \frac{C}{|x|}$. It is known that in the case discussed in the present paper 
(i.e. for zero particle mass $m=0$) the operator
$H_1 = \sqrt{-\Delta} - \frac{C}{|x|}$ is unbounded from below when $C>\frac{2}{\pi}$. If $C \leq
\frac{2}{\pi}$, then the operator $H_1$ is bounded from below (in fact positive), but $\Spec H_1 =
\Spec_{\rm{ess}} H_1 = [0, \infty)$ and $\inf\Spec H_1 = 0$ is not an eigenvalue (see e.g. discussion
in \cite[p.499]{bib:DaL}). Furthermore, as seen in Example \ref{ex:katoclass}, the Coulomb potential
$V(x) = - \frac{C}{|x|}$ does not belong to the fractional Kato-class $\cK^1$.
}
\end{example}

\section{Intrinsic ultracontractivity of fractional Feynman-Kac semigroups}
\subsection{Analytic and probabilistic descriptions of intrinsic ultracontractivity}
\noindent
Intrinsic ultracontractivity (IUC) has been first introduced in \cite{bib:DS} for general semigroups of
compact operators and it proved to be a strong regularity property implying a number of ``nice"
properties of operator semigroups and their spectral properties (see, for instance, \cite{bib:D}).
Important examples include semigroups of elliptic operators and Schr{\"o}dinger semigroups either on $\Rd$
or on domains $D \subset \Rd$ with Dirichlet boundary conditions \cite{bib:B, bib:D, bib:BD}. More
recently, IUC has been addressed also in the case of semigroups generated by fractional Laplacians and
fractional Schr{\"o}dinger operators on bounded domains \cite{bib:CS1, bib:CS2, bib:Ku2, bib:KaKu}.

In this section we assume that all operators $T_t$ are compact. If $V$ is non-negative, then $\lambda_0 > 0$,
however, in our case it may happen that $\lambda_0 \leq 0$.

\begin{definition}[\textbf{Intrinsic fractional Feynman-Kac semigroup}]
\rm{
Let
\begin{equation}
\widetilde{u}(t,x,y) := \frac{e^{\lambda_0 t} u(t,x,y)}{\varphi_0(x) \varphi_0(y)}.
\label{intrkernel}
\end{equation}
We call the one-parameter semigroup $\{\widetilde{T}_t: t\geq 0\}$
\begin{equation}
\label{condisemigroup}
\widetilde{T}_t f(x) = \int_{\R^d} f(y) \widetilde{u}(t,x,y) \varphi^2_0(y)dy
\end{equation}
\emph{intrinsic fractional Feynman-Kac semigroup}, acting on $L^2(\R^d,\varphi^2_0dx)$.
}
\end{definition}

\noindent
From a probabilistic point of view the intrinsic semigroup is more natural than $\{T_t: t\geq 0\}$
since for every $t>0$ and $x \in \R^d$ it has the property $\widetilde{T}_t \1_{\R^d}(x) = 1$. The
intrinsic semigroup is generated by the operator $-\widetilde{H}_{\alpha}$, where
\begin{align*}
\widetilde{H}_{\alpha} := U^{-1} (H_{\alpha} - \lambda_0)U,
\end{align*}
and where the unitary map $U: L^2(\Rd, \varphi_0^2(x)dx) \rightarrow L^2(\Rd, dx)$ is defined by
\begin{equation}
U f(x) = \varphi_0 (x) f(x).
\label{unitary}
\end{equation}
For sufficiently regular functions $f$ (e.g., from Schwartz space) this operator can be computed
explicitly to be
\begin{eqnarray}
\label{intrgen}
\widetilde{H}_{\alpha} f(x)
= \cA \int_{\R^d} \frac{f(x)-f(y)}{|y-x|^{d+\alpha}} \frac{\varphi_0(y)}{\varphi_0(x)}dy.
\nonumber
\end{eqnarray}

Intrinsic ultracontractivity originally has been defined as the property that $\widetilde{T}_t$ is a
bounded operator from $L^2(\R^d,\varphi^2_0dx)$ to $L^\infty(\Rd)$ for every $t>0$, however, for our
purposes the following equivalent definition is more suitable.
\begin{definition}[\textbf{Intrinsically ultracontractive semigroup}]
\rm{
A semigroup $\{T_t: t \geq 0\}$ is called \emph{intrinsically ultracontractive} (IUC) if for every
$t > 0$ there is a constant $C_{V,t} > 0$ such that
\begin{equation}
\tilde u(t,x,y) \le C_{V,t}
\end{equation}
for all $x,y \in \Rd$.
\label{def:IU}
}
\end{definition}
\noindent
Also, for our purposes below we propose the following property.

\begin{definition}[\textbf{Asymptotically intrinsically ultracontractive semigroup}]
\rm{
We call a semigroup $\{T_t: t\geq 0\}$ \emph{asymptotically intrinsically ultracontractive} (AIUC)
if there exists $t_0 >0$ such that for every $t \geq t_0$ there is a constant $C_{V,t}>0$ for which
\begin{equation}
\label{eq:AIU}
\tilde u(t,x,y) \le C_{V,t}
\end{equation}
for all $x,y \in \Rd$.
\label{def:AIU}
}
\end{definition}
\noindent
As it will be seen in Subsection \ref{subsec:IUC} below IUC is a stronger property than AIUC. 

\begin{remark}
\rm{
Clearly, it suffices to assume that \eqref{eq:AIU} holds for some $t_0>0$ as by the semigroup property it 
extends to all $t>t_0$. Also, it is easy to see that if $\{T_t: t\geq 0\}$ is AIUC, then there is $t_0>0$ 
such that for every $t>t_0$ and all $x,y\in \Rd$
\begin{equation}
\label{eq:belowIU}
\tilde u(t,x,y) \geq C^{(1)}_{V,t}
\end{equation}
with a constant $C^{(1)}_{V,t} > 0$. The same applies for IUC, i.e., a lower bound holds for every $t>0$.
An immediate consequence of this is that if the semigroup is AIUC, then $\varphi_0 \in L^1(\R^d)$.
}
\end{remark}

\begin{lemma}
\label{lm:iucons}
The following two conditions are equivalent.
\begin{itemize}
\item[(1)]
The semigroup $\{T_t: t\geq 0\}$ is AIUC.
\item[(2)]
The property
\begin{align}
\label{eq:iucons}
\widetilde{u}(t,x,y) \stackrel {t \rightarrow \infty} \longrightarrow 1,
\end{align}
holds, uniformly in $(x,y) \in \R^d \times \R^d$.
\end{itemize}
\end{lemma}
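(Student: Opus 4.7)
The implication (1)$\Rightarrow$(2) is essentially a bootstrap from the pointwise projection bound in Lemma \ref{lm:proj} via the semigroup property, while the converse (2)$\Rightarrow$(1) is immediate.

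\textbf{Easy direction (2)$\Rightarrow$(1).} If $\tilde u(t,x,y)\to 1$ uniformly in $(x,y)$, then there exists $t_0>0$ such that $\tilde u(t,x,y)\le 2$ for all $x,y\in \R^d$ and all $t\ge t_0$, which is precisely AIUC.

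\textbf{Main direction (1)$\Rightarrow$(2).} Lemma \ref{lm:proj} gives
$$
|\tilde u(t,z,w)-1| \;\le\; \frac{C_V\, e^{-\Lambda t}}{\varphi_0(z)\varphi_0(w)}, \qquad t>2,
$$
which is pointwise but not uniform because $\varphi_0$ may decay at infinity. The idea is to absorb the troublesome factors $\varphi_0(z)^{-1}\varphi_0(w)^{-1}$ by sandwiching $\tilde u(t,\cdot,\cdot)$ between two copies of $\tilde u(t_0,\cdot,\cdot)$ via the semigroup property, using AIUC to control the outer factors and the remark following Definition \ref{def:AIU} to guarantee $\varphi_0 \in L^1(\R^d)$.

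Concretely, fix $t_0$ such that $\tilde u(s,x,y)\le K$ for all $s\ge t_0$ and all $x,y$, and recall $\varphi_0\in L^1$. The Chapman--Kolmogorov identity for the intrinsic semigroup (with reference measure $\varphi_0^2 dy$) reads
$$
\tilde u(t+2t_0,x,y) \;=\; \int\!\!\int \tilde u(t_0,x,z)\,\tilde u(t,z,w)\,\tilde u(t_0,w,y)\,\varphi_0^2(z)\varphi_0^2(w)\,dz\,dw.
$$
Decomposing $\tilde u(t,z,w)=1+[\tilde u(t,z,w)-1]$ and using $\tilde T_{t_0}\1=1$ together with the symmetry $\tilde u(t_0,w,y)=\tilde u(t_0,y,w)$, the constant term yields exactly $1$. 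For the remainder, Lemma \ref{lm:proj} gives
$$
\bigl|\tilde u(t+2t_0,x,y)-1\bigr| \;\le\; C_V e^{-\Lambda t}\int\!\!\int \tilde u(t_0,x,z)\,\tilde u(t_0,w,y)\,\varphi_0(z)\varphi_0(w)\,dz\,dw \;\le\; C_V K^2 \|\varphi_0\|_1^{\,2}\, e^{-\Lambda t},
$$
where the cancellation $\varphi_0^2(z)\varphi_0^2(w)/(\varphi_0(z)\varphi_0(w))=\varphi_0(z)\varphi_0(w)$ is crucial. The right-hand side is independent of $x,y$ and tends to $0$ as $t\to\infty$, yielding (\ref{eq:iucons}).

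The main obstacle is the interplay between the pointwise (non-uniform) projection bound and the $\varphi_0$-decay: the double convolution structure with respect to $\varphi_0^2\,dy$ is what allows one power of $\varphi_0$ in each variable to be ``consumed'' exactly matching the singularity in Lemma \ref{lm:proj}, leaving an $L^1$-norm that is finite precisely because of the AIUC assumption.
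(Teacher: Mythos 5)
Your proof is correct and follows the same overall strategy as the paper: both treat $(2)\Rightarrow(1)$ as immediate, and for $(1)\Rightarrow(2)$ both write $\tilde u(t+2t_0,x,y)$ via Chapman--Kolmogorov with the $t_0$-kernels on the outside, subtract off the constant term (which equals $1$ because $\widetilde T_{t_0}\1 = 1$), and use AIUC to bound the outer factors $\tilde u(t_0,\cdot,\cdot)$ uniformly. Where you part ways is in estimating the middle factor: you invoke Lemma \ref{lm:proj} directly, so that $|\tilde u(t,z,w)-1|\le C_V e^{-\Lambda t}/(\varphi_0(z)\varphi_0(w))$, and the $\varphi_0(z)^{-1}\varphi_0(w)^{-1}$ singularities are exactly absorbed by the $\varphi_0^2\,dz\,\varphi_0^2\,dw$ reference weights, leaving a $\|\varphi_0\|_1^2$ factor which is finite by the remark after Definition \ref{def:AIU}. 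The paper instead applies Cauchy--Schwarz in $L^2(\varphi_0^2 dz\,\varphi_0^2 dw)$ (using $\|\varphi_0\|_2=1$) to arrive at the Hilbert--Schmidt norm of $T_{t-2t_0}-e^{-\lambda_0(t-2t_0)}P_{\varphi_0}$, which it then controls by the eigenvalue series $\bigl(\sum_{k\ge1}e^{-2\lambda_k(t-2t_0)}\bigr)^{1/2}$. Both estimates are sound; yours is a bit more economical because it reuses the already-stated Lemma \ref{lm:proj} and $\varphi_0\in L^1$ rather than re-deriving the decay from the spectral sum, at the cost of requiring $t>2$ for the middle factor (harmless, since you are taking $t\to\infty$).
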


\begin{proof}
The implication $(2) \Rightarrow (1)$ is immediate, we only show the converse statement. We have for every
$x,y \in \R^d$ and $t>2t_0$
\begin{align*}
& \left|\widetilde{u}(t,x,y)-1\right| \\
& = \left|\int_{\R^d} \int_{\R^d} \frac{ u(t_0,x,z)u(t-2t_0,z,w)u(t_0,w,y)}{e^{- \lambda_0 t}\varphi_0(x) \varphi_0(y)} dzdw
- \frac{ e^{- \lambda_0 t}\varphi_0(x) \varphi_0(y)}{e^{- \lambda_0 t}\varphi_0(x) \varphi_0(y)}\right| \\
& = \left|\int_{\R^d} \int_{\R^d} \frac{ u(t_0,x,z)\varphi_0(z)\left(u(t-2t_0,z,w) - e^{- \lambda_0 (t-2t_0) }\varphi_0(z)
\varphi_0(w)\right)u(t_0,w,y)\varphi_0(w)}{e^{- \lambda_0 t}\varphi_0(x)\varphi_0(z)
\varphi_0(w)\varphi_0(y)} dzdw\right| \\
& \leq e^{\lambda_0 t} \left\|\frac{u(t_0,x,y)}{\varphi_0(x)\varphi_0(y)}\right\|^2_\infty\int_{\R^d}\int_{\R^d}
\left|u(t-2t_0,z,w) - e^{- \lambda_0 (t-2t_0) }\varphi_0(z) \varphi_0(w)\right|\varphi_0(z)\varphi_0(w) dzdw \\
& \leq C e^{\lambda_0 t} \left(\int_{\R^d} \int_{\R^d} \left|u(t-2t_0,z,w) -
e^{- \lambda_0 (t-2t_0) }\varphi_0(z) \varphi_0(w)\right|^2 dzdw\right)^{1/2}
\, .
\end{align*}
The last factor on the right hand side is the Hilbert-Schmidt norm of the operator $T_{t-2t_0} -
e^{-\lambda_0 (t-2t_0)}P_{\varphi_0}$, where $P_{\varphi_0}:L^2(\R^d) \rightarrow L^2(\R^d)$ is the projection
onto the one dimensional subspace of $L^2(\R^d)$ spanned by $\varphi_0$. This gives
$$
\left|\widetilde{u}(t,x,y)-1\right| \leq C e^{\lambda_0 t }
\left( \sum_{k=1}^\infty e^{-2\lambda_k (t-2t_0) }\right)^{1/2}
= C e^{2t_0\lambda_1} e^{-(\lambda_1- \lambda_0) t}
\left( \sum_{k=1}^\infty e^{-2(\lambda_k - \lambda_1) (t-2t_0) }\right)^{1/2}.
$$
By dominated convergence the last sum converges to the multiplicity of $\lambda_1$ as $t \rightarrow \infty$.
Since $\lambda_1 >\lambda_0$, \eqref{eq:iucons} follows.
\end{proof}

In Section 5 below it will be seen that (A)IUC has a direct impact on the properties of stationary Gibbs measures
of stable processes under Kato-decomposable potentials. To obtain information on the structure of these measures
(such as typical sample path behaviour and fluctuations) it is useful to understand IUC and AIUC in an alternative
probabilistic way on the level of the semigroup $\{T_t: t \geq 0\}$.

For the remainder of this section we will use the following conditions.
\begin{assumption}
\rm{
Suppose that $V$ is a Kato-decomposable potential such that for every $t>0$ the operators $T_t$ are compact.
Moreover, let
\begin{align}
\label{eq:condition2}
T_t \1_{\R^d}(x) \leq C_{D,t} T_t \1_D (x)
\, ,
\end{align}
where $t>0$, $x \in \R^d$, $D$ is a bounded non-empty Borel subset of $\R^d$, and $C_{D,t}>0$. We will
consider the following assumptions.
\begin{enumerate}
\item[(1)]
\label{cond:comp11}
For every $t>0$ there exists $D$ and $C_{D,t}$ such that \eqref{eq:condition2} holds for
all $x \in \R^d$.
\item[(2)]
\label{cond:comp12}
For every $t>0$ and $D$ there exists $C_{D,t}$ such that \eqref{eq:condition2} holds for
all $x \in \R^d$.
\item[(3)]
\label{cond:comp21}
There exists $t_0>0$ such that for every $t>t_0$ there is $D$ and $C_{D,t}$ such that
\eqref{eq:condition2} holds for all $x \in \R^d$.
\item[(4)]
\label{cond:comp22}
There exists $t_0>0$ such that for every $t>0$ and every $D$ there is $C_{D,t}$ such that
\eqref{eq:condition2} holds for all $x \in \R^d$.
\end{enumerate}
}
\end{assumption}
\noindent
Clearly, by the semigroup property $T_t T_s = T_{t+s}$ whenever \eqref{eq:condition2} holds for some
$t>0$, set $D$ and constant $C_{D,t}$, then it holds for all $s \geq t$ with the same $D$ and $C_{D,t}$.

First we note that IUC can be characterized by the above conditions.

\begin{lemma}
Let Assumption \ref{cond:comp11} (1) hold. Then the semigroup $\{T_t: t \geq 0\}$ is IUC. Let the semigroup
$\{T_t: t \geq 0\}$ be IUC. Then Assumption \ref{cond:comp12} (2) holds.
\end{lemma}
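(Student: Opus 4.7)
The plan is to prove the two implications separately. Both rest on converting the integral inequality in \eqref{eq:condition2} into a pointwise comparison with $\varphi_0(x)\varphi_0(y)$, exploiting that $\varphi_0$ is continuous, strictly positive, and bounded.

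\textbf{Direction 1:} Assumption (1) $\Rightarrow$ IUC. Fix $t>0$ and, by Assumption (1) applied at time $t/3$, take a bounded nonempty Borel set $D$ and constant $C_{D,t/3}$ with $T_{t/3}\1_{\R^d}(x)\le C_{D,t/3}\,T_{t/3}\1_D(x)$ for every $x$. Since $\varphi_0$ is continuous and strictly positive, $m_D:=\inf_{D}\varphi_0>0$, so
$$
T_{t/3}\1_D(x)\;\le\;m_D^{-1}\int_D u(t/3,x,y)\varphi_0(y)\,dy\;\le\;m_D^{-1}\,T_{t/3}\varphi_0(x)\;=\;m_D^{-1}e^{-\lambda_0 t/3}\varphi_0(x).
$$
Combining gives $T_{t/3}\1_{\R^d}(x)\le K_t\,\varphi_0(x)$ for all $x$. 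I then apply the Chapman-Kolmogorov identity three times and use boundedness of $u(t/3,\cdot,\cdot)$ from Lemma \ref{lm:kernel}(1):
$$
u(t,x,y)\;=\;\int\!\!\int u(t/3,x,z_1)\,u(t/3,z_1,z_2)\,u(t/3,z_2,y)\,dz_1 dz_2\;\le\;\|u(t/3,\cdot,\cdot)\|_\infty\,T_{t/3}\1_{\R^d}(x)\,T_{t/3}\1_{\R^d}(y).
$$
This yields $u(t,x,y)\le M_t\,\varphi_0(x)\varphi_0(y)$, i.e.\ $\tilde u(t,x,y)\le e^{\lambda_0 t}M_t$. Since $t>0$ was arbitrary, the semigroup is IUC.

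\textbf{Direction 2:} IUC $\Rightarrow$ Assumption (2). IUC at fixed $t$ gives $u(t,x,y)\le C_{V,t}e^{-\lambda_0 t}\varphi_0(x)\varphi_0(y)$, and the matching lower bound $u(t,x,y)\ge c_{V,t}\varphi_0(x)\varphi_0(y)$ noted in the Remark after Definition~\ref{def:AIU} also holds (together with $\varphi_0\in L^1(\R^d)$, as pointed out there). For any bounded Borel set $D\subset\R^d$ of positive Lebesgue measure,
$$
T_t\1_{\R^d}(x)\;\le\;C_{V,t}e^{-\lambda_0 t}\varphi_0(x)\,\|\varphi_0\|_1,\qquad T_t\1_D(x)\;\ge\;c_{V,t}\varphi_0(x)\,\|\varphi_0\|_{L^1(D)},
$$
and the quotient of these inequalities gives Assumption (2) with $C_{D,t}=C_{V,t}e^{-\lambda_0 t}\|\varphi_0\|_1/(c_{V,t}\|\varphi_0\|_{L^1(D)})$; this is finite because $\varphi_0$ is continuous and strictly positive on $\overline D$.

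The main subtleties are only bookkeeping: Assumption (2) implicitly requires $D$ to have positive Lebesgue measure (otherwise $T_t\1_D\equiv 0$ by integration against the kernel); the semigroup step in Direction 1 needs the kernel $u(t/3,\cdot,\cdot)$ to be bounded, supplied by Lemma \ref{lm:kernel}(1); and in Direction 2 one should note that the two-sided bound on $\tilde u$ carries over to all times once the upper bound is known at every $t>0$. All the ingredients (kernel regularity, positivity and continuity of $\varphi_0$, and integrability properties of the Feynman-Kac functional) have already been established in Sections 2-3, so no genuinely new estimate is required.
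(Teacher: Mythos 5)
Your proof is correct and takes essentially the same route as the paper's: both directions use the same Chapman-Kolmogorov factorization of $u(t,x,y)$ into three $u(t/3,\cdot,\cdot)$ factors, the same pointwise two-sided comparison $u(t,x,y)\asymp\varphi_0(x)\varphi_0(y)$ implied by IUC, and the same conversion $T_{t/3}\1_D\le (\inf_D\varphi_0)^{-1}T_{t/3}\varphi_0$. The only difference is bookkeeping order — you first establish $T_{t/3}\1_{\R^d}\le K_t\varphi_0$ before inserting it, while the paper carries out the chain of inequalities inline — and your observation that $D$ must have positive Lebesgue measure is a correct (if implicit in the paper) reading of the assumptions.
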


\begin{proof}
First assume that the semigroup $\{T_t: t\geq 0\}$ is IUC. Fix $t > 0$ and a bounded set $D \subset \R^d$.
For $x \in \R^d$ we have
$$
T_t \1_{\R^d} (x) = \int_{\R^d} u(t,x,y) dy \leq C_{V,t} \left\| \varphi_0 \right\|_1 \varphi_0(x).
$$
On the other hand,
$$
T_t \1_D(x) = \int_D u(t,x,y) dy \geq C_{V,t} \varphi_0(x) \int_D \varphi_0(y) dy
$$
and Assumption \ref{cond:comp12} (2) follows.

Let now Assumption \ref{cond:comp11} (1) be satisfied. For every $x,y \in \R^d$ and $t>0$ by the semigroup
property
\begin{eqnarray*}
u(t, x, y)
&=&
\int_{\R^d} \int_{\R^d} u(t/3,x,z) u(t/3,z,w) u(t/3,w,y) dzdw \\
&\leq&
C_{V,t} T_{t/3} \1_{\R^d} (x) T_{t/3} \1_{\R^d} (y) \leq C_{V,t} T_{t/3} \1_D (x) T_{t/3} \1_D (y) \\
&\leq&
\frac{C_{V,t}}{(\inf_{y \in D} \varphi_0(y))^2} T_{t/3} \varphi_0(x) T_{t/3} \varphi_0(y) =
C_{V,t} e^{-2 \lambda_0 t/3} \varphi_0(x) \varphi_0(y).
\end{eqnarray*}
\end{proof}
\noindent
Conditions (1)-(2) above were used also in \cite{bib:KS} in proving IUC of the relativistic $\alpha$-stable
Feynman-Kac semigroup. A straightforward corollary of the above lemma is the following.
\begin{corollary}
Consider the semigroup $\{T_t: t\geq 0\}$.
\begin{enumerate}
\item[(1)]
If Assumption \ref{cond:comp21} (3) holds, then $\{T_t: t\geq 0\}$ is AIUC. If $\{T_t: t\geq 0\}$ is AIUC,
then Assumption \ref{cond:comp22} (4) holds.
\item[(2)]
The semigroup $\{T_t: t\geq 0\}$ is IUC if and only if either of the two equivalent Assumptions
\ref{cond:comp11} (1) and \ref{cond:comp12} (2) is satisfied, and it is AIUC if and only if either of the
two equivalent Assumptions \ref{cond:comp21} (3) and \ref{cond:comp22} (4) holds.
\end{enumerate}
\end{corollary}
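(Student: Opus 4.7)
The plan is to adapt the preceding lemma's argument to the asymptotic setting for part~(1), and then to obtain part~(2) by assembling that lemma, part~(1), and the trivial implications Assumption (2)~$\Rightarrow$~Assumption (1) and Assumption (4)~$\Rightarrow$~Assumption (3).

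For the implication Assumption~(3)~$\Rightarrow$~AIUC, fix $t > 3t_0$ with $t_0$ as in Assumption~(3) and use the semigroup factorization $u(t,x,y) = \int\int u(t/3,x,z)\,u(t/3,z,w)\,u(t/3,w,y)\,dz\,dw$. Since $t/3 > t_0$, Assumption~(3) provides a bounded set $D$ and a constant with $T_{t/3}\mathbf{1}_{\R^d}(x) \le C\,T_{t/3}\mathbf{1}_D(x)$ for all $x \in \R^d$. Bounding the middle factor $u(t/3,z,w)$ by $\|u(t/3,\cdot,\cdot)\|_\infty$ (finite by Lemma~\ref{lm:kernel}(1)) and then using $\mathbf{1}_D(\cdot) \le (\inf_{z \in D}\varphi_0(z))^{-1}\varphi_0(\cdot)$ together with $T_{t/3}\varphi_0 = e^{-\lambda_0 t/3}\varphi_0$ yields $u(t,x,y) \le C'_{V,t}\,\varphi_0(x)\varphi_0(y)$, which is exactly AIUC for $t > 3t_0$. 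For the converse AIUC~$\Rightarrow$~Assumption~(4), fix $t > t_0$ and a bounded set $D$. Integrating the AIUC upper bound $u(t,x,y) \le C_{V,t} e^{-\lambda_0 t}\varphi_0(x)\varphi_0(y)$ in $y$ yields $T_t\mathbf{1}_{\R^d}(x) \le C'\varphi_0(x)\|\varphi_0\|_1$, and the matching lower bound of AIUC integrated only over $D$ gives $T_t\mathbf{1}_D(x) \ge C''\varphi_0(x)\int_D\varphi_0(y)\,dy$; taking the ratio produces (4.3) with a constant $C_{D,t}$ depending on $\|\varphi_0\|_1$, $\int_D \varphi_0$ and the AIUC constants.

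Part~(2) is then immediate. Trivially Assumption~(2)~$\Rightarrow$~Assumption~(1), since ``for every $D$'' is stronger than ``for some $D$'', while the preceding lemma yields Assumption~(1)~$\Rightarrow$~IUC~$\Rightarrow$~Assumption~(2); this closes the chain giving the first equivalence. The analogous chain with IUC replaced by AIUC and (1),(2) replaced by (3),(4) is closed by part~(1) together with the trivial (4)~$\Rightarrow$~(3).

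The only subtle point in the whole argument is the use of $\varphi_0 \in L^1(\R^d)$ in the step AIUC~$\Rightarrow$~Assumption~(4); this is the consequence of AIUC noted in the remark preceding the lemma, derived by integrating the lower bound on $\tilde u(t,x,y)$ against $dy$, combined with $T_t\mathbf{1}_{\R^d}(x) < \infty$ guaranteed by~(\ref{eq:FKFest}) and the strict positivity of $\varphi_0$. Beyond this single observation, the proof is routine bookkeeping following the argument of the preceding lemma.
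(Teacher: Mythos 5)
Your proof is correct and follows exactly the route the paper intends: the paper labels this result ``a straightforward corollary of the above lemma'' without further proof, and your argument is the natural execution of that, reproducing the preceding lemma's factorization and integration steps with $t>3t_0$ in place of $t>0$ to handle the asymptotic case, and closing the equivalence chains in part~(2) via the trivial implications ``for all $D$'' $\Rightarrow$ ``there exists $D$.'' The point you flag about $\varphi_0 \in L^1(\R^d)$ is indeed the one fact needed beyond the lemma, and it is exactly what the preceding remark supplies.
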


Using the above statements we can give an equivalent probabilistic definition of IUC and AIUC.
\begin{definition}
\rm{
Let $V$ be a Kato-decomposable potential. We say that the corresponding semigroup $\{T_t: t\geq 0\}$ is
\emph{intrinsically ultracontractive (IUC)} whenever for every $t>0$ there exist a non-empty bounded
Borel set $D \subset \R^d$ and a constant $C_{V,t} > 0$ such that for all $x \in \R^d$
\begin{align}
\label{cond:probIU}
\ex^x\left[X_t \in D^c; e_V(t) \right] \leq C_{V,t} \: \ex^x\left[X_t \in D; e_V(t) \right]
\end{align}
holds. We say that the semigroup $\{T_t: t\geq 0\}$ is \emph{asymptotically intrinsically ultracontractive
(AIUC)} whenever there exists $t_0 > 0$ such that for every $t \geq t_0$ there is a non-empty bounded set
$D \subset \R^d$ and a constant $C_{V,t}>0$ such that for all $x \in \R^d$ inequality \eqref{cond:probIU}
holds.
}
\end{definition}

\subsection{Ultracontractivity properties of intrinsic fractional Feynman-Kac semigroups}
\label{subsec:IUC}
\noindent
Our main goal here is to establish and characterize IUC and AIUC for fractional Schr\"odinger operators with
Kato-decomposable potentials. While IUC usually is defined and considered for non-negative potentials, we do
not assume positivity and include also the case when the bottom of the spectrum may be negative.

First we need the following technical lemma.
\begin{lemma}
\label{lm:techiu}
Let $V$ be a Kato-decomposable potential and $D \subset \Rd$ be an arbitrary open set. Then for every
$t>0$ we have that
\begin{itemize}
\item [(1)]
$\ex^x\left[\frac{t}{2} \geq \tau_D; e_V(t)\right] \leq C_{V,t}
\ex^x\left[e_V(\tau_D)T_{\frac{t}{2}}\1(X_{\tau_D})\right]$
\item[(2)]
$\ex^x\left[\frac{t}{2}<\tau_D; e_V(t)\right] \leq C_{V,t}
\ex^x\left[\frac{t}{4} < \tau_D; e_V \left(\frac{t}{4}\right)\right] \sup_{y \in D}T_{3t/4}\1(y)$.
\end{itemize}
\end{lemma}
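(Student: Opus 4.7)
The plan is to split the Feynman--Kac functional $e_V(t)$ across a suitable intermediate time point and apply the Markov (respectively strong Markov) property, exploiting the uniform bound \eqref{eq:FKFest} where needed.

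For part (1) the natural splitting point is $\tau_D$, the (random) first exit time from $D$. By the strong Markov property,
\begin{align*}
\ex^x\!\left[\tfrac{t}{2}\ge\tau_D;\,e_V(t)\right]
= \ex^x\!\left[\tfrac{t}{2}\ge\tau_D;\,e_V(\tau_D)\,T_{t-\tau_D}\1(X_{\tau_D})\right].
\end{align*}
The key remaining step is to bound $T_{t-\tau_D}\1(y)$ pointwise by a constant times $T_{t/2}\1(y)$. Since on the event in question $u:=t-\tau_D\in[t/2,t]$, I would further split the integral defining $e_V(u)$ at the deterministic time $t/2$ and apply the ordinary Markov property at $t/2$ together with \eqref{eq:FKFest}, to obtain
$$
T_u\1(y)=\ex^y\!\left[e_V(t/2)\,T_{u-t/2}\1(X_{t/2})\right]
\le \|T_{u-t/2}\1\|_\infty\,T_{t/2}\1(y)\le C_{V,t}\,T_{t/2}\1(y),
$$
with the last inequality uniform in $u\in[t/2,t]$ since $u-t/2\le t/2$. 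Combining the two displays finishes (1).

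For part (2) the splitting point is the deterministic time $t/4$. The event $\{\tau_D>t/2\}$ factors as $\{\tau_D>t/4\}$ together with the requirement that the path stay in $D$ on $[t/4,t/2]$, which after the time shift by $t/4$ becomes again $\{\tau_D>t/4\}$ for the restarted process. Decomposing $e_V(t)$ as $e_V(t/4)$ times the corresponding functional of the shifted path and invoking the ordinary Markov property at $t/4$,
\begin{align*}
\ex^x\!\left[\tfrac{t}{2}<\tau_D;\,e_V(t)\right]
&=\ex^x\!\left[\tfrac{t}{4}<\tau_D;\,e_V(t/4)\,\ex^{X_{t/4}}\!\left[\tfrac{t}{4}<\tau_D;\,e_V(3t/4)\right]\right]\\
&\le \ex^x\!\left[\tfrac{t}{4}<\tau_D;\,e_V(t/4)\,T_{3t/4}\1(X_{t/4})\right].
\end{align*}
Since $D$ is open and the paths are c\`adl\`ag, $X_{t/4}\in D$ on $\{\tau_D>t/4\}$, hence $T_{3t/4}\1(X_{t/4})\le\sup_{y\in D}T_{3t/4}\1(y)$. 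This yields (2), in fact with constant $C_{V,t}=1$.

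The only mildly delicate point is the uniform-in-$u$ control of $\|T_{u-t/2}\1\|_\infty$ needed in (1), which reduces to \eqref{eq:FKFest} once one notices that $u-t/2\le t/2$. Everything else is careful bookkeeping of the (strong) Markov property applied to the multiplicative Feynman--Kac functional; since $\tau_D$ is a stopping time of the natural filtration of $\pro X$ and $e_V(\tau_D)$ is measurable with respect to the stopped $\sigma$-algebra, the use of the strong Markov property in (1) is standard.
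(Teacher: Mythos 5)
Your proof of part (2) coincides with the paper's proof: the decomposition at the deterministic time $t/4$, the Markov property, and the observation that $X_{t/4}\in D$ on $\{\tau_D>t/4\}$ (c\`adl\`ag paths, $D$ open) are exactly the paper's steps, and both give $C_{V,t}=1$.

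For part (1) your route is genuinely different from the paper's, and it is correct. The paper avoids a random time parameter in the Markov property: it first writes $e_V(t)=e_{V_+}(t)e_{-V_-}(t)$ and exploits the sign of $V_\pm$ together with $\tau_D\le t/2$ to replace, pathwise, the $V_+$-integral over $[\tau_D,t]$ by one over the shorter interval $[\tau_D,\tau_D+t/2]$ and the $V_-$-integral over $[\tau_D,t]$ by one over the longer interval $[\tau_D,\tau_D+t]$. After this deformation the strong Markov property is applied only with the fixed times $t/2$ and $t$, and a further application of the ordinary Markov property at $t/2$ plus \eqref{eq:integrab} reassembles $T_{t/2}\1$. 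You instead apply the strong Markov property directly with the $\cF_{\tau_D}$-measurable time $t-\tau_D$, obtaining $\ex^x[\tfrac t2\ge\tau_D;\,e_V(\tau_D)T_{t-\tau_D}\1(X_{\tau_D})]$, and then control the random time parameter through the semigroup factorization $T_u=T_{t/2}T_{u-t/2}$ and the uniform Kato bound \eqref{eq:FKFest} to get $T_u\1\le C_{V,t}T_{t/2}\1$ for $u\in[t/2,t]$. This is more direct and bypasses the $V_+/V_-$ splitting, at the modest cost of invoking the strong Markov property for a functional $G(t-\tau_D,\theta_{\tau_D}\omega)$ with an $\cF_{\tau_D}$-measurable parameter, which — as you correctly note — is a standard monotone-class extension. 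Both arguments yield the same statement.
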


\begin{proof}
By the plain and the strong Markov properties we obtain
\begin{eqnarray*}
\lefteqn{
\ex^x \left[\frac{t}{2} \geq \tau_D; e_{V_{+}}(t)e_{-V_{-}}(t)\right] } \\
&& \leq
\ex^x \left[\frac{t}{2}\geq\tau_D;e_V(\tau_D)e^{-\int_{\tau_D}^{\frac{t}{2}+\tau_D}V_{+}(X_s)ds}
e^{\int_{\tau_D}^{t+\tau_D}V_{-}(X_s) ds}\right] \\
&& \leq
\ex^x\left[e_V(\tau_D)\ex^{X_{\tau_D}} \left[e_{V_{+}}\left(\frac{t}{2}\right)e_{-V_{-}}(t)\right]\right]\\
&& =
\ex^x\left[e_V(\tau_D)\ex^{X_{\tau_D}} \left[e_{V_{+}}\left(\frac{t}{2}\right)e_{-V_{-}}
\left(\frac{t}{2}\right)\right. \left.\ex^{X_{t/2}}\left[e_{-V_{-}}\left(\frac{t}{2}\right)\right]\right]
\right]\\
&& \leq
\sup_{y \in \Rd} \ex^y\left[e_{-V_{-}}\left(\frac{t}{2}\right)\right] \ex^x\left[e_V(\tau_D)\ex^{X_{\tau_D}} \left[e_{V_{+}}\left(\frac{t}{2}\right)e_{-V_{-}}\left(\frac{t}{2}\right) \right]\right]\\
&& \leq
C_{V,t} \ex^x\left[e_V(\tau_D)\ex^{X_{\tau_D}} \left[e_V\left(\frac{t}{2}\right) \right]\right].
\end{eqnarray*}
This gives (1). Similarly, once again by the Markov property
\begin{align*}
\ex^x\left[\frac{t}{2}<\tau_D; e_V(t)\right]
& =
\ex^x\left[\frac{t}{4} < \tau_D; e_V\left(\frac{t}{4}\right)\ex^{X_{t/4}}
\left[\frac{t}{4}< \tau_D; e_V\left(\frac{3t}{4} \right)\right]\right] \\
& \leq
\sup_{y \in D}T_{3t/4}\1(y) \: \ex^x\left[\frac{t}{4} < \tau_D; e_V\left(\frac{t}{4}\right)\right],
\end{align*}
which completes the proof.
\end{proof}

For the remainder of this section we will use the following conditions.

\begin{assumption}
\label{ass:AIUC}
\rm{
Let $V$ be a Kato-decomposable potential such that $V(x) \rightarrow \infty$ as $|x| \rightarrow \infty$.
Consider the following assumptions.
\begin{itemize}
\item[(1a)]
For any $t > 0$ there is a constant $C_{V,t}>0$ such that for all $x,y \in \R^d$
\begin{align}
\label{as:ass1}
u(t,x,y) \leq C_{V,t} (1+|x|)^{-d-\alpha} (1+|y|)^{-d-\alpha}.
\end{align}
\item[(1b)]
There exists $t_0>0$ such that for any $t > t_0$ there is a constant $C_{V,t}>0$ such that for all
$x,y \in \R^d$ \eqref{as:ass1} holds.
\item[(2a)]
For any $t > 0$ there is a constant $C_{V,t}>0$ such that for all $r > 0$, $x \in \overline{B}(0,r)^c$
\begin{align}
\label{as:ass2}
\ex^x[t<\tau_{\overline{B}(0,r)^c}; e_V(t)] \leq C_{V,t} (1+r)^{-d-\alpha}.
\end{align}
\item[(2b)]
There exists $t_0>0$ such that for any $t > t_0$ there is a constant $C_{V,t}>0$ such that for all $r > 0$,
$x \in \overline{B}(0,r)^c$ \eqref{as:ass2} holds.
\item[(3a)]
For any $t > 0$ there is a constant $C_{V,t}>0$ such that for all $x \in \R^d$
\begin{align}
\label{as:ass3}
T_t \1(x) \leq C_{V,t} (1+|x|)^{-d-\alpha}.
\end{align}
\item[(3b)]
There exists $t_0>0$ such that for any $t > t_0$ there is a constant $C_{V,t}>0$ such that for all $x \in \R^d$
\eqref{as:ass3} follows.
\end{itemize}
}
\end{assumption}

Our first main characterization result is as follows.

\begin{theorem}[\textbf{Characterization of IUC and AIUC}]
\label{th:charIUC}
Let $V$ be a Kato-decomposable potential such that $V(x) \rightarrow \infty$ as $|x| \rightarrow \infty$.
\begin{enumerate}
\item[(1)]
The semigroup $\{T_t: t\geq 0\}$ is intrinsically ultracontractive if and only if any of the three equivalent
conditions $(1a)$, $(2a)$, $(3a)$ in Assumption \ref{ass:AIUC} hold.
\item[(2)]
The semigroup $\{T_t: t\geq 0\}$ is asymptotically intrinsically ultracontractive if and only if any of the
three equivalent conditions $(1b)$, $(2b)$ and $(3b)$ in Assumption \ref{ass:AIUC} is satisfied.
\end{enumerate}
\end{theorem}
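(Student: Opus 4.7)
My plan treats parts (1) and (2) in parallel, since only the range of $t$ for which estimates are required differs. In both cases I establish a cycle of implications, for IUC proving $\mathrm{IUC}\Rightarrow (1a)\Rightarrow (3a)\Rightarrow (2a)\Rightarrow \mathrm{IUC}$ and analogously with the (b)-conditions for AIUC. Three of the four steps are routine. For $\mathrm{IUC}\Rightarrow (1a)$, I invoke the upper bound of Theorem \ref{th:eig} with $D=B(x,1)$ and $\eta$ such that $\lambda_0+\eta>0$: Lemma \ref{lm:Greenop} combined with translation invariance of $\ex^x[\tau_{B(x,1)}]$ and $V+\eta\geq\eta$ outside $\supp(V_{-})$ makes $v_{D,\eta}(x)$ uniformly bounded in $x$, so $\varphi_0(x)\leq C(1+|x|)^{-d-\alpha}$; substituting into the IUC bound $u(t,x,y)\leq C_{V,t}\varphi_0(x)\varphi_0(y)$ yields (1a). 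The implication $(1a)\Rightarrow (3a)$ follows from integrating $u(t,x,y)$ in $y$ and $\int_{\R^d}(1+|y|)^{-d-\alpha}dy<\infty$, and $(3a)\Rightarrow (2a)$ from $\ex^x[t<\tau_{\overline{B}(0,r)^c};e_V(t)]\leq T_t\1(x)\leq C_{V,t}(1+|x|)^{-d-\alpha}\leq C_{V,t}(1+r)^{-d-\alpha}$ for $|x|>r$.

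The main work is $(2a)\Rightarrow\mathrm{IUC}$, carried out in three stages. In the first I upgrade (2a) to the sharper bound
\begin{equation*}
T_t\1(x)\;\leq\;C_{V,t}\,v_{D,\eta}(x)(1+|x|)^{-d-\alpha},\qquad D=B(x,1),
\end{equation*}
for all $|x|$ large enough that $B(x,1)\cap\supp(V_{-})=\emptyset$. Setting $D_r=\overline{B}(0,r)^c$ with $r=|x|/2$, I decompose
\begin{equation*}
T_t\1(x)\;=\;\ex^x\!\left[\tfrac{t}{2}\geq\tau_{D_r};\,e_V(t)\right]+\ex^x\!\left[\tfrac{t}{2}<\tau_{D_r};\,e_V(t)\right].
\end{equation*}
The second summand is dominated via Lemma \ref{lm:techiu}(2) by a product of $\ex^x[\tfrac{t}{4}<\tau_{D_r};e_V(\tfrac{t}{4})]$, which (2a) bounds by $C_{V,t}(1+r)^{-d-\alpha}$, and of $\sup_{y\in D_r}T_{3t/4}\1(y)$, which is uniformly bounded, giving the target $(1+|x|)^{-d-\alpha}$ rate. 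The first summand is dominated via Lemma \ref{lm:techiu}(1), together with boundedness of $T_{t/2}\1$ on $\overline{B}(0,|x|/2)$, by a constant multiple of $u_{D_r}(x)=\ex^x[e_V(\tau_{D_r})]$, which is regular $V$-harmonic on every ball contained in $D_r$. Applying Lemma \ref{cor:bhisc} in its sharpened form carrying the prefactor $v_B$ (the full extension of \cite[Theorem 6]{bib:KaKu} to $V\in\cK^\alpha_{\loc}$ alluded to in the text) to $u_{D_r}$ on balls centred at $x$ of radius $O(|x|)$, and iterating the resulting mean-value inequality along the pattern of Lemma \ref{lm:cruc} (invoking \eqref{eq:cruest} at each step), bootstraps any baseline decay of $u_{D_r}$ into the target rate $v_{D,\eta}(x)(1+|x|)^{-d-\alpha}$.

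In the second stage, the lower bound in Theorem \ref{th:eig} converts the sharpened estimate into $T_t\1(x)\leq C_{V,t}\varphi_0(x)$ for large $|x|$; on compact sets the ratio $T_t\1/\varphi_0$ is bounded by strict positivity and continuity of $\varphi_0$, so the bound extends to all $x\in\R^d$. In the third stage, I pick any bounded set $B$ with $\inf_B\varphi_0>0$ and invoke the semigroup property to obtain
\begin{equation*}
u(t,x,y)\;\leq\;C_{V,t}\,T_{t/3}\1(x)\,T_{t/3}\1(y)\;\leq\;\frac{C_{V,t}}{(\inf_B\varphi_0)^2}\,T_{t/3}\varphi_0(x)\,T_{t/3}\varphi_0(y)\;=\;C_{V,t}\,e^{-2\lambda_0 t/3}\varphi_0(x)\varphi_0(y),
\end{equation*}
which is IUC. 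The main obstacle is the first-stage bootstrap: (2a) alone does not produce the crucial $v_{D,\eta}$ prefactor, which must be recovered by careful combination of the sharpened $V$-harmonic mean-value inequality with \eqref{eq:cruest}, following the same iterative pattern used in the proof of Theorem \ref{th:eig}. The AIUC case is proved identically, with ``for all $t>0$'' replaced by ``for all $t\geq t_0$'' throughout; the semigroup property then propagates each estimate automatically to all larger times.
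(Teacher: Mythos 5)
Your cycle $\mathrm{IUC}\Rightarrow(1a)\Rightarrow(3a)\Rightarrow(2a)\Rightarrow\mathrm{IUC}$ is a legitimate reordering of the paper's cycle $\mathrm{IUC}\Rightarrow(1a)\Rightarrow(2a)\Rightarrow(3a)\Rightarrow\mathrm{IUC}$, and the three easy implications you outline are correct and essentially as in the paper. The problem is the key step $(2a)\Rightarrow\mathrm{IUC}$, and specifically the first stage of your argument, which does not work as stated.

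The gap lies in the choice $D_r=\overline{B}(0,|x|/2)^c$ for the decomposition. You bound the first summand $\ex^x[\frac{t}{2}\geq\tau_{D_r};e_V(t)]$ by a constant multiple of $u_{D_r}(x)=\ex^x[e_V(\tau_{D_r})]$ and claim to bootstrap $u_{D_r}(x)$ down to $v_{B(x,1),\eta}(x)(1+|x|)^{-d-\alpha}$. This is impossible: since the process exits $D_r$ by jumping into $\overline{B}(0,r)$, the representation \eqref{eq:IWF} gives, restricting to the event that the first jump from $B(x,1)$ lands in $\overline{B}(0,r)$,
\begin{equation*}
u_{D_r}(x)\;\geq\;\cA\int_{B(x,1)}G^V_{B(x,1)}(x,y)\int_{\overline{B}(0,r)}\frac{dz}{|z-y|^{d+\alpha}}\,dy\;\gtrsim\;v_{B(x,1)}(x)\,\frac{r^d}{|x|^{d+\alpha}}\;\sim\;v_{B(x,1)}(x)\,|x|^{-\alpha},
\end{equation*}
which is $\sim|x|^d$ times larger than your target. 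Consistently with this, the mean-value inequality from the sharpened form of Lemma \ref{cor:bhisc} applied to $u_{D_r}$ on any ball $B(x,\rho)\subset D_r$ integrates $u_{D_r}(y)$ over $B(x,\rho/2)^c$; since $u_{D_r}\equiv1$ on $\overline{B}(0,r)$, that integral always carries a term $\int_{\overline{B}(0,r)}|y-x|^{-d-\alpha}dy\sim r^{-\alpha}$, so iterating ``along the pattern of Lemma \ref{lm:cruc}'' cannot push the decay below $r^{-\alpha}$. For potentials growing only slightly faster than $\log|x|$ (which Theorem \ref{th:suff}(2) still allows for IUC), $v_{B(x,1)}(x)|x|^{-\alpha}$ is not even of order $(1+|x|)^{-d-\alpha}$, so this decomposition fails to recover (3a), let alone the sharper bound. (Separately, the second-summand bound $(1+|x|)^{-d-\alpha}$ you record also lacks the $v_{D,\eta}$ prefactor, so the stage-1 target is not met there either.)

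The paper circumvents this by using two different decomposition sets in two separate steps. In Step 3 $((2a)\Rightarrow(3a))$ the set is $D=B(x,|x|/2)$, a ball centred at $x$; the process exits $D$ by jumping anywhere outside $B(x,|x|/2)$, the exit value is controlled by the auxiliary function $f(y)=\ex^y[e_V(\tau_D)T_{t/2}\1(X_{\tau_D})]$ on $D$ extended by $T_{t/2}\1$ on $D^c$, and Lemma \ref{cor:bhisc} together with \eqref{eq:cruest} closes a bootstrap on the decay of $T_t\1$ itself. Only once (3a) is in hand does Step 4 switch to $D=B(x,1)$, where the exit estimate $u_{D,\eta}(x)\leq Cv_{D,\eta}(x)$ from Lemma \ref{lm:est}, the inequality $v_{D,\eta}(x)\geq\frac{t}{4}\ex^x[\frac{t}{4}<\tau_D;e_{V_\eta}(\frac{t}{4})]$, and the now-available bound $T_{t/2}\1(y)\leq C(1+|y|)^{-d-\alpha}$ produce the $v_{D,\eta}(x)$ prefactor. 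Your attempt to compress both steps into one using $\overline{B}(0,|x|/2)^c$ fails precisely at the point where the bootstrap is supposed to close.
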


\begin{proof}
We only prove the equivalence of IUC with conditions $(1a)$, $(2a)$ and $(3a)$; the proof of equivalence of 
AIUC with $(1b)$, $(2b)$ and $(3b)$ can be done in the same way. We proceed in a succession of steps.

\smallskip
\noindent
\emph{(Step 1)}
For the proof of the implication IUC $\Rightarrow$ (1a) consider the set
$$
A = \left\{x \in \R^d: B(x,1) \cap \supp(V_{-}) = \emptyset \right\}.
$$
Clearly, by the assumption on the potential $A^c$ is bounded and $V\geq0$ on each $B(x,1)$ for $x \in A$.
If $x,y \in A$, then (1a) follows by the definition of IUC and the upper bound in Theorem \ref{th:eig}.
Whenever $x,y \in A^c$, then the boundedness of $u(t,x,y)$ and $A^c$ give (1a). If now $x \in A$, $y \in A^c$,
then we have
$$
u(t,x,y) \leq C_{V,t} \varphi_0(x) \varphi_0(y) \leq C_{V,t} \varphi_0(x) \leq C_{V,t} (1+|x|)^{-d-\alpha}
(1+|y|)^{-d-\alpha}
$$
by an argument similar as above. The case $x \in A^c$, $y \in A$ follows by symmetry.

\medskip
\noindent
\emph{(Step 2)}
By (1a) we have
\begin{align*}
\ex^x[t<\tau_{\overline{B}(0,r)^c}; e_V(t)]
& \leq
\ex^x[X_t \in \overline{B}(0,r)^c; e_V(t)] \\
& =
\int_{\overline{B}(0,r)^c} u(t,x,y) dy \leq C_{V,t} (1+|x|)^{-d-\alpha} \leq C_{V,t} (1+r)^{-d-\alpha}
\,  ,
\end{align*}
for $x \in \overline{B}(0,r)^c$. This gives (2a).

\medskip
\noindent
\emph{(Step 3)}
Next we prove (2a) $\Rightarrow$ (3a). Let $R > 1$ be sufficiently large so that $V(y) \geq 1$
for $|y| \geq R$. Let $|x| \geq 2R$, $r=|x|/2$ and $D = B(x,r)$. It is clear that $D \cap \supp(V_{-})
= \emptyset$. We write
$$
T_t\1(x)=\ex^x\left[\frac{t}{2}<\tau_D; e_V(t)\right] + \ex^x\left[\frac{t}{2} \geq \tau_D; e_V(t)\right].
$$
By condition (2a) and Lemma \ref{lm:techiu} we obtain
\begin{align*}
\ex^x\left[\frac{t}{2}<\tau_D; e_V(t)\right]
& \leq
C_{V,t} \ex^x\left[\frac{t}{4}<\tau_D; e_V\left(\frac{t}{4}\right)\right] \\
& \leq
C_{V,t} \ex^x\left[\frac{t}{4}<\tau_{\overline{B}(0,r)^c}; e_V\left(\frac{t}{4}\right)\right]
\leq
C_{V,t} (1 + |x|)^{-d-\alpha} \,
\end{align*}
and
$$
\ex^x\left[\frac{t}{2} \geq \tau_D; e_V(t)\right] \leq
C_{V,t} \ex^x\left[e_V(\tau_D)T_{t/2} \1(X_{\tau_D})\right].
$$
Thus
\begin{align}
\label{eq:Tt}
T_t\1(x) \leq C_{V,t} \left((1 + |x|)^{-d-\alpha} + \ex^x\left[e_V(\tau_D)T_{t/2} \1(X_{\tau_D})\right]\right).
\end{align}
We need to estimate the latter expectation. Put
\begin{equation*}
\label{def2}
f(y) =
\begin{cases}
\ex^y\left[e_V(\tau_D) T_{t/2}\1(X_{\tau_D})\right] & \text{for} \quad y \in D,\\
T_{t/2}\1(y) &  \text{for} \quad y \in D^c.   	
\end{cases}
\end{equation*}
Then $f(y) = \ex^y\left[e_V(\tau_D) f(X_{\tau_D})\right]$, $y \in D$, and by \eqref{eq:bhisc} we obtain
\begin{equation}
\begin{split}
\label{eq:f}
f(x)
& \leq C
\int_{B(x,r/2)^c}\frac{f(y)}{|y-x|^{d+\alpha}}dy  \\
& =
C \left(\int_{D \backslash B(x,r/2)}\frac{\ex^y\left[e_V(\tau_D) T_{t/2}\1(X_{\tau_D})\right]}
{|y-x|^{d+\alpha}}dy + \int_{D^c}\frac{T_{t/2}\1(y)}{|y-x|^{d+\alpha}}dy\right).
\end{split}
\end{equation}
Hence by a combination of \eqref{eq:Tt} and \eqref{eq:f}
\begin{equation}
\begin{split}
\label{eq:Tt1}
T_t\1(x)
& \leq
C_{V,t} \left((1 + |x|)^{-d-\alpha} + \int_{B(x,r/2)^c}\frac{T_{t/2}\1(y)}{|y-x|^{d+\alpha}}dy
\right.\\
& \left.
\qquad \qquad +
\sup_{y \in D}\ex^y\left[e_V(\tau_D) T_{t/2}\1(X_{\tau_D})\right] (1+|x|)^{-\alpha} \right)
\end{split}
\end{equation}
is obtained. The fact that $V \geq 1$ on $D$ and \eqref{eq:IWF} imply for $y \in D$ that
\begin{align*}
\ex^y\left[e_V(\tau_D) T_{t/2}\1(X_{\tau_D})\right]
& =
\ex^y\left[X_{\tau_D} \in B(x,3r/2) \backslash D; e_V(\tau_D) T_{t/2}\1(X_{\tau_D})\right] \\
& \qquad +
\ex^y\left[X_{\tau_D} \in B(x,3r/2)^c; e_V(\tau_D) T_{t/2}\1(X_{\tau_D})\right] \\
& \leq
u_D(y) \sup_{z \in B(x,3r/2)} T_{t/2}\1(z)+\cA\int_D G^V_D(y,z)\int_{B(x,3r/2)^c}
\frac{T_{t/2}\1(v)}{|v-z|^{d+\alpha}}dvdz \\
& \leq
u_D(y) \sup_{z \in B(x,3r/2)} T_{t/2}\1(z) + Cv_D(y) \int_{B(x,3r/2)^c}
\frac{T_{t/2}\1(v)}{|v-x|^{d+\alpha}}dv \\
& \leq
\sup_{z \in B(x,3r/2)} T_{t/2}\1(z) + C \int_{B(x,r/2)^c}
\frac{T_{t/2}\1(y)}{|y-x|^{d+\alpha}}dy.
\end{align*}
Thus we obtain from \eqref{eq:Tt1}
\begin{equation}
\begin{split}
\label{eq:Tt3}
T_t\1(x)
\leq
C_{V,t} \left((1 + |x|)^{-d-\alpha} + \int_{B(x,r/2)^c}\frac{T_{t/2}\1(y)}{|y-x|^{d+\alpha}}dy
\; + \right.
\left.\sup_{z \in B(x,3r/2)} T_{t/2}\1(z) (1+|x|)^{-\alpha} \right).
\end{split}
\end{equation}
Suppose now that for some $\gamma \geq 0$, $\gamma \neq d$, we have $T_t\1(x) \leq C_{V,t,\gamma}
(1+|x|)^{-\gamma}$, for all $x \in \R^d$, $t>0$. This clearly holds for $\gamma = 0$. Then by \eqref{eq:Tt3}
and \eqref{eq:cruest} we obtain
\begin{equation}
\begin{split}
\label{eq:Tt2}
T_t\1(x) & \leq C_{V,t} (1 + |x|)^{-d-\alpha} + C_{V,t,\gamma} (1 + |x|)^{-\gamma-\alpha} \\
&
\qquad + C_{V,t,\gamma} \int_{B(x,r/2)^c}(1 + |y|)^{-\gamma}|y-x|^{-d-\alpha}dy
\leq
C_{V,t,\gamma}(1 + |x|)^{-\gamma^{'}}
\end{split}
\end{equation}
for $\gamma^{'} = \min(\gamma +\alpha, d+\alpha)$ and $|x| \geq 2R$. Also, $T_t\1(x) \leq
C_{V,t,\gamma}(1 + |x|)^{-\gamma^{'}}$ for $|x| \leq 2R$.

Now, starting from \eqref{eq:Tt3} again and taking $\gamma = \gamma^{'}$ in \eqref{eq:Tt2}, we obtain the
bounds \eqref{eq:Tt2} with larger $\gamma^{'}$. By using this argument recursively, we can improve the order
of the estimate $T_t\1(x) \leq C_{V,t,\gamma}(1 + |x|)^{-\gamma^{'}}$. If $\gamma^{'} = d$ occurs after some
step, then we take $\gamma = d - \frac{\alpha}{2}$ in the next one. On iteration, after $\left\lfloor 2 +
\frac{d}{\alpha}\right\rfloor$ steps $T_t\1(x) \leq C_{V,t}(1 + |x|)^{-d-\alpha}$ follows, for all $x \in \R^d$.

\medskip
\noindent
\emph{(Step 4)}
To complete the proof of the theorem we prove the implication (3a) $\Rightarrow$ IUC. By the bound
\begin{align*}
u(t,x,y) =
\int_{\R^d} \int_{\R^d} u\left(\frac{t}{3},x,z\right)u\left(\frac{t}{3},z,v\right)u\left(\frac{t}{3},v,y\right)dvdz
\leq C_{V,t} T_{t/3}\1(x) T_{t/3}\1(y),
\end{align*}
it suffices to show that $T_t\1(x) \leq C_{V,t} \varphi_0(x)$, for $x \in \R^d$ and $t > 0$.
Put $D=B(x,1)$ and $r = \frac{|x|}{2}$. Let $R>3$ be sufficiently large so that $D \cap \supp(V_{-}) = \emptyset$
for $|x| > R$. If $\lambda_1 > 0$, we choose $\eta=0$, if $\lambda_1 = 0$, we choose $\eta=1$, and if $\lambda_1 < 0$
we choose $\eta = -2 \lambda_1$. In all these cases $\eta + \lambda_1 > 0$. Then we have
\begin{align}
\label{eq:sem}
T_t\1(x) = e^{\eta t} e^{-\eta t} T_t\1(x) = C_{V,t} \left(\ex^x\left[\frac{t}{2}<\tau_D; e_{V_{\eta}}(t)\right]
+ \ex^x\left[\frac{t}{2} \geq \tau_D; e_{V_{\eta}}(t)\right]\right) \, ,
\end{align}
where $V_{\eta} = V + \eta$. We start by estimating the first expectation in \eqref{eq:sem}. Note that
\begin{align*}
v_{D, \eta}(x)
& =
\ex^x\left[\int_0^{\tau_D} e^{-\int_0^v V_{\eta}(X_s)ds}dv\right] \geq
\ex^x\left[\frac{t}{4} < \tau_D;\int_0^{\frac{t}{4}} e^{-\int_0^v V_{\eta}(X_s)ds}dv \right] \\
& \geq
\ex^x\left[\frac{t}{4} < \tau_D; \frac{t}{4}e^{-\int_0^{\frac{t}{4}}V_{\eta}(X_s)ds}\right]  =
\frac{t}{4} \ex^x\left[\frac{t}{4} < \tau_D; e_{V_{\eta}}\left(\frac{t}{4}\right)\right] \, .
\end{align*}
Using this, Lemma \ref{lm:techiu} (2) and condition (3a), we obtain
\begin{align}
\label{eq:sem1}
\ex^x\left[\frac{t}{2}<\tau_D; e_{V_{\eta}}(t)\right] \leq C_{V,t} \ex^x\left[\frac{t}{4} < \tau_D; e_{V_{\eta}}
\left(\frac{t}{4}\right)\right] \sup_{y \in D}T_{3t/4}\1(y) \leq C_{V,t} v_{D,\eta}(x) (1 + |x|)^{-d-\alpha}
\, .
\end{align}

For the second expectation in \eqref{eq:sem} a combination of Lemma \ref{lm:techiu} (1), \eqref{eq:IWF},
\eqref{eq:est}, condition (3a) and \eqref{eq:cruest} yields
\begin{align*}
\ex^x
& \left[\frac{t}{2} \geq \tau_D;  e_{V_{\eta}}(t)\right]  \leq
C_{V,t} \ex^x \left[e_{V_{\eta}}(\tau_D)\ex^{X_{\tau_D}} \left[e_{V_{\eta}}\left(\frac{t}{2}\right)\right]\right] \\
& = C_{V,t} \ex^x \left[X_{\tau_D} \in B(x,r); e_{V_{\eta}}(\tau_D)\ex^{X_{\tau_D}}
\left[e_{V_{\eta}}\left(\frac{t}{2}\right)\right]\right] \\
& \qquad + C_{V,t} \ex^x \left[X_{\tau_D} \in B(x,r)^c; e_{V_{\eta}}(\tau_D)\ex^{X_{\tau_D}}
\left[e_{V_{\eta}}\left(\frac{t}{2}\right)\right]\right] \\
& \leq C_{V,t}\left( u_{D,\eta}(x) \sup_{y \in B(x,r)} T_{t/2}\1(y) +
\int_D G^{V_\eta}_D(x,y) \int_{B(x,r)^c} T_{t/2}\1(z) |z-y|^{-d-\alpha}dzdy\right) \\
& \leq C_{V,t} \left(v_{D,\eta}(x)(1 + |x|)^{-d-\alpha} + v_{D,\eta}(x) \int_{B(x,r)^c}
(1 + |z|)^{-d-\alpha} |z-x|^{-d-\alpha}dz\right) \\
& \leq C_{V,t}v_{D,\eta}(x)(1 + |x|)^{-d-\alpha} \, .
\end{align*}
By \eqref{eq:sem} and \eqref{eq:sem1} this gives  $T_t\1(x) \leq C_{V,t} v_{D,\eta}(x) (1 + |x|)^{-d-\alpha}$
for $|x| > R$. Thus by Theorem \ref{th:eig} we obtain $T_t\1(x) \leq C_{V,t} \varphi_0(x)$ for $|x| > R$. Since
$\varphi_0$ is continuous and strictly positive, we have that $\inf_{z \in B(0,R)} \varphi_0(z)>0$. Hence for
$|x| \leq R$ we have
$$
T_t\1(x) \leq C_{V,t} \inf_{z \in B(0,R)} \varphi_1(z) \leq C_{V,t} \varphi_0(x),
$$
which completes the proof of the theorem.
\end{proof}

Using Theorem \ref{th:charIUC} a sufficient condition for (A)IUC in terms of the behaviour of the potential $V$ at
infinity is as follows.

\begin{theorem}[\textbf{Sufficient condition for IUC and AIUC}]
\label{th:suff}
Let $V$ be a Kato-decomposable potential. Then:
\begin{enumerate}
\item[(1)] If there exists $R>1$ and $C_{V,R}>0$ such that for all $|x|>R$
\begin{equation}
\label{eq:suffAIU}
\frac{V(x)}{\log |x|} \geq C_{V,R},
\end{equation}
then each operator $T_t$, $t>0$, is compact and the semigroup $\{T_t: t\geq 0\}$  is asymptotically
intrinsically ultracontractive.
\item[(2)] If moreover
$$
\lim_{|x| \rightarrow \infty}\frac{V(x)}{\log |x|} = \infty,
$$
then the semigroup $\{T_t: t\geq 0\}$  is intrinsically ultracontractive.
\end{enumerate}
\end{theorem}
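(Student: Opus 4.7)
The plan is to reduce both statements to the characterization in Theorem \ref{th:charIUC}, which equates (A)IUC with the decay condition (2a)/(2b) of Assumption \ref{ass:AIUC} on the survival probability outside a ball. This is the most direct route because the expectations in (2a)/(2b) only see paths confined to $\overline{B}(0,r)^c$, exactly where the hypothesis gives a usable lower bound on $V$. Preliminarily, the hypothesis forces $V(x)\to\infty$, so Lemma \ref{lm:compact} supplies compactness of every $T_t$; moreover $V_{-}$ has bounded support since $V(x)\geq C_{V,R}\log|x|>0$ for $|x|>\max(R,1)$.

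The heart of the argument is a single pathwise estimate. Fix $r>\max(R,1)$ and $x\in\overline{B}(0,r)^c$. On the event $\{t<\tau_{\overline{B}(0,r)^c}\}$ the definition of $\tau_D$ together with right-continuity forces $X_s\in\overline{B}(0,r)^c$, hence $|X_s|>r$, for every $s\in[0,t]$, so $V(X_s)\geq C_{V,R}\log|X_s|\geq C_{V,R}\log r$. Integrating and exponentiating gives
\begin{equation*}
\ex^x\bigl[t<\tau_{\overline{B}(0,r)^c};\,e_V(t)\bigr] \leq r^{-C_{V,R}\,t}.
\end{equation*}
For $r\leq\max(R,1)$ the Feynman-Kac bound \eqref{eq:FKFest} provides a uniform-in-$x$ estimate which, since $r$ is bounded, can be absorbed into $C_{V,t}(1+r)^{-d-\alpha}$ by enlarging the constant.

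For part (1), I would set $t_0:=(d+\alpha)/C_{V,R}$; then for every $t\geq t_0$ the exponent satisfies $C_{V,R}\,t\geq d+\alpha$, so the pathwise bound yields $r^{-C_{V,R}\,t}\leq 2^{d+\alpha}(1+r)^{-d-\alpha}$ for $r\geq 1$. Combined with the trivial bound for small $r$, this verifies condition (2b) of Assumption \ref{ass:AIUC}, and Theorem \ref{th:charIUC}(2) delivers AIUC. For part (2), fix an arbitrary $t>0$; the stronger hypothesis $V(x)/\log|x|\to\infty$ lets me choose $R_t$ so large that $V(x)/\log|x|\geq(d+\alpha)/t$ for $|x|>R_t$. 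Replaying the pathwise calculation with this $R_t$ gives $r^{-(d+\alpha)}$ on the trapping event, which verifies (2a) for this particular $t$; as $t$ is arbitrary, Theorem \ref{th:charIUC}(1) yields IUC.

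The only nontrivial step is the pathwise estimate; everything else is bookkeeping of constants and reduction to the already-established characterization theorem. The main observation to pin down carefully is that the trapping event genuinely constrains $|X_s|>r$ on the whole interval $[0,t]$ (not just up to $\tau_D^{-}$), so that the logarithmic lower bound on $V$ translates into a polynomial decay of $e_V(t)$ of the required order $d+\alpha$ once $t$ is large enough.
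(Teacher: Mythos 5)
Your proof is correct and follows essentially the same route as the paper: the paper introduces $g(r)=\inf_{y\in B(0,r)^c}V(y)$ and bounds $\ex^x[t<\tau_{\overline{B}(0,r)^c};e_V(t)]\leq e^{-g(r)t}$, which is exactly your pathwise trapping estimate phrased without the $g$-notation, and both arguments then feed the resulting polynomial decay into conditions (2b) resp.\ (2a) of Assumption \ref{ass:AIUC} and invoke Theorem \ref{th:charIUC}. The compactness step via Lemma \ref{lm:compact} and the choice $t_0=(d+\alpha)/C_{V,R}$ also match the paper verbatim.
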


\begin{proof}
Denote $g(r) = \inf_{x \in B(0,r)^c} V(x)$. We have
\begin{align*}
\ex^x[t<\tau_{\overline{B}(0,r)^c}; e_V(t)] & \leq e^{-g(r)t},
\end{align*}
for every $x \in \overline{B}(0,r)^c$, $r>0$.

First we prove (1). By condition \eqref{eq:suffAIU} we have $\lim_{|x| \rightarrow \infty}V(x) = \infty$ and by Lemma
\ref{lm:compact} each $T_t$ is compact. Let $r>R$. Fix $t_0 = \frac{\alpha+d}{C_{V,R}}$. By assumption, for all $t \geq t_0$ we have
$$
g(r) \geq C_{V,R} \log(r) \geq \frac{d+\alpha}{t} \log r,
$$
which gives
$$
e^{-g(r)t} \leq C (1 + r)^{-d-\alpha}
$$
for $r>R$. We obtain
\begin{align*}
\ex^x[t<\tau_{\overline{B}(0,r)^c}; e_V(t)] \leq C (1 + r)^{-d-\alpha},
\end{align*}
for every $x \in \overline{B}(0,r)^c$, $r>R$, and $t \geq t_0 = \frac{\alpha+d}{C_{V,R}}$.

If $r \leq R$ and $x \in \overline{B}(0,r)^c$, then
$$
\ex^x[t<\tau_{\overline{B}(0,r)^c}; e_V(t)] \leq C_{V,t} = C_{V,t}(1+R)^{d+\alpha} (1+R)^{-d-\alpha} \leq
C_{V,t} (1+r)^{-d-\alpha}.
$$
Hence there exists $t_0 >0$ such that for $t \geq t_0$ and $x \in \overline{B}(0,r)^c$, $r>0$, we have
\begin{align*}
\ex^x[t<\tau_{\overline{B}(0,r)^c}; e_V(t)] & \leq  C_{V,t} (1+r)^{-d-\alpha}
\, .
\end{align*}
This is the condition (2b) in Assumption \ref{ass:AIUC} and the assertion follows now by Theorem \ref{th:charIUC}.

For the proof of (2) observe that by the assumption for every $t>0$ there is $R>0$ such that $g(r) \geq 
\frac{d+\alpha}{t} \log(1+r)$, for $r>R$. This leads us to the condition (2a) in Assumption \ref{ass:AIUC} 
in a similar way as before and the assertion again follows by Theorem \ref{th:charIUC}.
\end{proof}

\begin{theorem}[\textbf{Necessary condition for IUC and AIUC}]
\label{th:nec}
Let $V$ be a Kato-decomposable potential such that $V(x) \rightarrow \infty$ as $|x| \rightarrow \infty$. 
\begin{enumerate}
\item [(1)] If the
semigroup $\{T_t: t\geq 0\}$  is intrinsically ultracontractive, then for any $\varepsilon \in (0,1]$
$$
\lim_{|x| \rightarrow \infty}\frac{\sup_{y \in B(x,\varepsilon)} V(y)}{\log |x|} = \infty.
$$
\item[(2)] If the semigroup $\{T_t: t\geq 0\}$ is asymptotically intrinsically ultracontractive, then there exists a
constant $C_V>0$ such that for every $\varepsilon \in (0,1]$ there is $R_{\varepsilon}>2$ such that for
all $|x|>R_{\varepsilon}$
\begin{equation}
\label{eq:necAIU}
\frac{\sup_{y \in B(x,\varepsilon)} V(y)}{\log |x|} \geq C_{V}.
\end{equation}
\end{enumerate}
\end{theorem}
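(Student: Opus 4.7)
The plan is to invert the direction of Theorem \ref{th:charIUC}: IUC (resp.\ AIUC) is equivalent to the pointwise upper bound $T_t\1(x) \le C_{V,t}(1+|x|)^{-d-\alpha}$ holding for every $t>0$ (resp.\ every $t\ge t_0$), and I would extract the required growth of $V$ by matching this against a simple \emph{lower} bound on $T_t\1(x)$. The lower bound comes from forcing the process to remain in $B(x,\varepsilon)$ throughout $[0,t]$: on the event $\{\tau_{B(x,\varepsilon)} > t\}$ one has $X_s\in B(x,\varepsilon)$ for every $s\in[0,t]$, so $\int_0^t V(X_s)\,ds \le t\sup_{y\in B(x,\varepsilon)}V(y)$ and hence
\begin{equation*}
T_t\1(x) \;\ge\; \ex^x\!\left[\tau_{B(x,\varepsilon)}>t;\, e_V(t)\right] \;\ge\; c(t,\varepsilon)\,\exp\!\left(-t\sup_{y\in B(x,\varepsilon)}V(y)\right),
\end{equation*}
where $c(t,\varepsilon):=\pr^0(\tau_{B(0,\varepsilon)}>t)>0$ by translation invariance of the stable process. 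The case $\sup_{y\in B(x,\varepsilon)}V(y)=+\infty$, which is possible because $V_+$ is only in $\cK^\alpha_{\loc}$, is trivial and can be dismissed at the outset.

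Combining the upper and lower bounds and taking logarithms gives, for any $\varepsilon\in(0,1]$ and any admissible $t$,
\begin{equation*}
\frac{\sup_{y\in B(x,\varepsilon)}V(y)}{\log|x|} \;\ge\; \frac{d+\alpha}{t}\cdot\frac{\log(1+|x|)}{\log|x|} \;+\; \frac{\log c(t,\varepsilon)-\log C_{V,t}}{t\log|x|},
\end{equation*}
so that $\liminf_{|x|\to\infty}\sup_{y\in B(x,\varepsilon)}V(y)/\log|x| \ge (d+\alpha)/t$. For (1), IUC permits every $t>0$, and letting $t\to 0^+$ forces the $\liminf$ to $+\infty$, yielding the desired limit. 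For (2), AIUC grants the upper bound only for $t\ge t_0$; fixing, say, $t=2t_0$ and setting $C_V:=(d+\alpha)/(4t_0)$ delivers \eqref{eq:necAIU} once $R_\varepsilon$ is taken large enough to swallow the correction term.

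The only point requiring mild care, and what I would view as the main thing to watch, is that in (2) the constant $C_V$ must be \emph{independent of $\varepsilon$}. This is ensured because the $\varepsilon$-dependent quantities $c(t,\varepsilon)$ and $C_{V,t}$ enter only through the $O(1/\log|x|)$ correction, and are therefore absorbed entirely into the threshold $R_\varepsilon$. Beyond this bookkeeping, I do not anticipate a substantial technical obstacle: the argument is a one-ball survival estimate matched against the characterization provided by Theorem \ref{th:charIUC}.
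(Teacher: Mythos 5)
Your proof is correct and takes essentially the same route as the paper: pass through Theorem \ref{th:charIUC} to get a power-law upper bound, and lower-bound the same quantity by confining the process to $B(x,\varepsilon)$ on $[0,t]$ to produce the factor $\exp(-t\sup_{B(x,\varepsilon)}V)$. The only cosmetic difference is that the paper works with condition $(2a)$/$(2b)$ (the exit-time estimate $\ex^x[t<\tau_{\overline{B}(0,r)^c};e_V(t)]\le C_{V,t}(1+r)^{-d-\alpha}$) together with the inclusion $\{t<\tau_{B(x,\varepsilon)}\}\subset\{t<\tau_{\overline{B}(0,|x|/2)^c}\}$, whereas you go through $(3a)$/$(3b)$ (the bound on $T_t\1$) directly; both are equivalent by Theorem \ref{th:charIUC} and lead to the identical comparison and bookkeeping of the $\varepsilon$-dependence into $R_\varepsilon$.
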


\begin{proof}
Set $r = \frac{|x|}{2}$ for $|x| \geq 2$ and $D=B(x,\varepsilon)$ for an arbitrary $0< \varepsilon \leq 1$. First we prove (1). By Theorem \ref{th:charIUC} the condition (2a) in Assumption \ref{ass:AIUC} follows. Then we have for $|x| \geq 2$ and $t > 0$ that
\begin{align*}
\pr^x(t<\tau_D) e^{-\sup_{y \in D} V(y) t} \leq \ex^x[t<\tau_D; e_V(t)] \leq
\ex^x[t<\tau_{\overline{B}(0,r)^c}; e_V(t)]  \leq  C_{V,t} (1 + r)^{-d-\alpha}.
\end{align*}
Hence for $0< t \leq 1$ and $|x| \geq 2$,
\begin{align*}
\pr^0(1<\tau_{B(0,\varepsilon)}) e^{-\sup_{y \in D} V(y) t} \leq C_{V,t} |x|^{-d-\alpha} \, .
\end{align*}
It follows that $e^{-\sup_{y \in D} V(y) t} \leq C_{V,t,\varepsilon} |x|^{-d-\alpha}$ and thus
\begin{align*}
\frac{\sup_{y \in D} V(y)}{\log|x|} \geq \frac{\alpha +d}{t} - \frac{C_{V,t,\varepsilon}}{t \log|x|}.
\end{align*}
This implies $\liminf_{|x| \rightarrow \infty} \frac{\sup_{y \in D} V(y)}{\log|x|} \geq
\frac{\alpha +d}{t}$, for any $0 < t \leq 1$.

For the proof of (2) observe that by using Theorem \ref{th:charIUC} and the condition (2b) in Assumption \ref{ass:AIUC}, similarly as before, we have for $|x| \geq 2$,
\begin{align*}
\pr^0(t_0<\tau_{B(0,\varepsilon)}) e^{-\sup_{y \in D} V(y) t_0} \leq C_{V,t_0} |x|^{-d-\alpha} \, .
\end{align*}
It follows that
\begin{align*}
e^{-\sup_{y \in D} V(y) t_0} \leq \frac{C_{V,t_0}}{\pr^0(t_0<\tau_{B(0,\varepsilon)})} |x|^{-d-\alpha} \,
\end{align*}
and thus
$$
\frac{\sup_{y \in D} V(y)}{\log|x|} \geq \frac{1}{t_0}\left(\alpha +d - \frac{\log \left(\frac{C_{V,t_0}}{\pr^0(t_0<\tau_{B(0,\varepsilon)})}\right)}{\log|x|}\right).
$$
Now it is enough to choose $R_{\varepsilon} > 2$ such that for $|x|>R_{\varepsilon}$ we have
$$
\frac{\alpha +d}{2} \geq \frac{\log \left(\frac{C_{V,t_0}}{\pr^0(t_0<\tau_{B(0,\varepsilon)})}\right)}{\log|x|}.
$$
\end{proof}

For potentials $V$ comparable on unit balls outside a compact set we obtain the following result.
\begin{corollary}[\textbf{Borderline case}]
Let $V$ be a Kato decomposable potential such that $V(x) \rightarrow \infty$ as $|x| \rightarrow \infty$.
Suppose there exist $R>1$ such that $B(0,R-1)^c \cap \supp(V_{-}) = \emptyset$, and a constant $M_V>0$ such
that for every $|x| > R$ and $y \in B(x,1)$
\begin{align}
\label{eq:Mq}
V(y) \leq M_V \: V(x)
\end{align}
holds. Then:
\begin{enumerate}
\item[(1)] The semigroup $\{T_t: t\geq 0\}$ is IUC if and only if
$$
\lim_{|x| \rightarrow \infty}
\frac{V(x)}{\log |x|} = \infty.
$$
\item[(2)] The semigroup
$\{T_t: t\geq 0\}$ is AIUC if and only if there exists $R>0$ and $C_{V,R}>0$ such that
$$
\frac{V(x)}{\log |x|} \geq C_{V,R}.
$$
\end{enumerate}
\label{border}
\end{corollary}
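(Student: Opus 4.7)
The plan is to derive both equivalences directly from the sufficient and necessary conditions already established in Theorems \ref{th:suff} and \ref{th:nec}. The comparability hypothesis (\ref{eq:Mq}) will only be needed to translate the suprema $\sup_{y\in B(x,1)} V(y)$ that appear in the necessary conditions back into pointwise values of $V(x)$.

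For the ``if'' direction in both (1) and (2) I would simply invoke Theorem \ref{th:suff}: the hypotheses match verbatim what is assumed there, so no use of (\ref{eq:Mq}) is required. In (2), $V(x)/\log|x| \geq C_{V,R}$ on $B(0,R)^c$ gives AIUC by Theorem \ref{th:suff}(1); in (1), $V(x)/\log|x| \to \infty$ gives IUC by Theorem \ref{th:suff}(2). Observe also that in both cases the growth condition automatically forces $V(x) \to \infty$ as $|x| \to \infty$, which is the standing hypothesis for compactness of $T_t$ via Lemma \ref{lm:compact}.

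For the ``only if'' direction I would apply Theorem \ref{th:nec} with the choice $\varepsilon = 1$. Under (\ref{eq:Mq}), for every $|x| > R$ and every $y \in B(x,1)$ we have $V(y) \leq M_V V(x)$, and therefore
$$
\sup_{y \in B(x,1)} V(y) \;\leq\; M_V\, V(x),
$$
which gives
$$
\frac{V(x)}{\log |x|} \;\geq\; \frac{1}{M_V}\cdot \frac{\sup_{y\in B(x,1)} V(y)}{\log |x|}
$$
for all $|x| > R$. If the semigroup is IUC, Theorem \ref{th:nec}(1) tells us the right-hand side tends to $\infty$ as $|x|\to\infty$, so the left-hand side does as well, establishing the forward implication in (1). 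If the semigroup is AIUC, Theorem \ref{th:nec}(2) (applied with $\varepsilon = 1$) provides $R_1 > 2$ and a constant $C_V > 0$ such that the right-hand side is at least $C_V/M_V$ for $|x| > \max(R, R_1)$, yielding the forward implication in (2) with constant $C_{V,R} = C_V/M_V$.

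There is no real obstacle in this argument: the comparability condition (\ref{eq:Mq}) is precisely designed to bridge the gap between the pointwise lower bound on $V$ assumed on one side and the local supremum bound produced on the other side by Theorems \ref{th:suff} and \ref{th:nec}. The only minor care needed is in confirming that the standing assumption $V(x)\to\infty$ (required for invoking Theorem \ref{th:nec}) holds in the ``only if'' direction, which is given outright in the hypothesis of the corollary.
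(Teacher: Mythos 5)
Your proof is correct and follows precisely the argument the paper intends: the paper's own proof is the single line ``Straightforward consequence of Theorems \ref{th:suff} and \ref{th:nec}, and \eqref{eq:Mq},'' and you have supplied exactly the details -- invoke Theorem \ref{th:suff} directly for the sufficiency, and use the comparability bound $\sup_{y\in B(x,1)}V(y)\le M_V V(x)$ to convert the conclusions of Theorem \ref{th:nec} (taken with $\varepsilon=1$) into pointwise statements about $V(x)$ for the necessity.
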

\begin{proof}
Straightforward consequence of Theorems \ref{th:suff} and \ref{th:nec}, and \eqref{eq:Mq}.
\end{proof}
\noindent
The borderline case for fractional Schr\"odinger operators can be compared with the classic result for
the Feynman-Kac semigroup associated with Schr\"odinger operators $H = -\Delta+V$ which says that if
$V(x) = |x|^{\beta}$  the semigroup is IUC if and only if $\beta > 2$. Moreover, if $\beta > 2$, then
$c f(x) \le \varphi_0(x) \le C f(x)$, $|x| > 1$, holds with some $C,c>0$ and
$$
f(x) = |x|^{-\beta/4 + (d - 1)/2}\exp(-2 |x|^{1 + \beta/2}/(2+\beta)).
$$
For details see Cor. 4.5.5, Th. 4.5.11 and Cor. 4.5.8 in \cite{bib:D}, also \cite{bib:DS}.
\begin{remark}
\rm{
From the above it follows that for these processes IUC is a stronger property than AIUC. Indeed, consider
$$
V(x) = \log|x|\1_{\left\{|x|>1\right\}}(x) - \frac{1}{|x|^{\alpha/2}} \1_{\left\{|x|\leq1\right\}}(x).
$$
Then the Feynman-Kac semigroup $\{T_t: t\geq 0\}$ corresponding to $(-\Delta)^{\alpha/2} + V$ is AIUC
but it is not IUC. However, we do not know whether in the case of diffusions AIUC is a weaker property 
than IUC or not.
}
\label{classsicIUC}
\end{remark}

\begin{remark}
\rm{
From Corollary \ref{border} it follows that the condition on $V$ for the intrinsic ultracontractivity of the
semigroup generated by $(-\Delta)^{\alpha/2} + V$ is much weaker than in the case of $-\Delta + V$. This can
be explained by a pathwise interpretation of IUC. While this will be done elsewhere in detail,
we note that using the Feynman-Kac semigroup it is clear that the effect of the potential on the distribution
of paths is a concurrence of killing at a rate of $e^{-\int_0^t V_{+}(X_s)ds}$ and mass generation at a rate
of $e^{\int_0^t V_{-}(X_s)ds}$. However, if $V(x) \to \infty$ as $|x| \to \infty$, then outside some compact
set only the killing effect occurs and $\ex^x[e^{-\int_0^t V(X_s)ds}]$ gives the probability of survival of the
process under the potential up to time $t$. Asymptotically the probability of survival of the process staying
near the starting point $x$ is roughly $e^{-t V(x)}$, while the probability of surviving while travelling to
a region $D$ where the killing part of the potential is smaller is $\pr^x(X_t \in D)$. From \eqref{cond:probIU}
we see that $\{T_t: t\geq 0\}$ is IUC if and only if the probability that the process under $V$ survives up to
time $t$ far from $\inf V$ is bounded by the probability that the process survives up to time $t$ and is in
some bounded region $D$, independently of its starting point. Comparing these two probabilities suggests that
the outcome of the competing effects will be decided by the ratio $V(x)/|\log \pr^x(X_t \in D)|$. The following
examples support this interpretation. Take $D$ to be a bounded neighbourhood of the location of $\inf V$ (in the
examples below, the origin) and $x \in D^c$ such that $\dist(x,D)$ is large. Denote in each case below by $\pr^x$
the measure of the process with $V\equiv 0$.
\begin{enumerate}
\item[(1)]
\emph{Brownian motion:}
The expression $\pr^x(B_t \in D) = (4\pi t)^{-d/2} \int_D e^{\frac{|y-x|^2}{4t}}dy$ gives Gaussian tails
$$
C^{(1)}_t e^{-C^{(2)}_{t,D} |x|^2} \leq \pr^x(B_t \in D)\leq C^{(3)}_t e^{-C^{(4)}_{t,D} |x|^2},
$$
with $C^{(1)},..., C^{(4)} > 0$, leading to $-\log\pr^x(B_t \in D) \asymp |x|^2$ for the borderline case as
in \cite{bib:D}.
\item[(2)]
\emph{Symmetric stable process:} By using estimate \eqref{eq:weaksc} we derive that
$$
\pr^x(X_t \in D) 
\asymp t \frac{1}{|x|^{d+\alpha}}= t e^{-(d+\alpha) \log |x|}.
$$
This gives $-\log \pr^x(X_t \in D) \asymp \log|x|$ for the borderline case of the potential, which agrees
with Theorems \ref{th:suff} and \ref{th:nec}.

\item[(3)]
\emph{Relativistic stable process:}
Let $(X^m_t)_{t\geq 0}$ be a process in $\R^d$ with parameters $\alpha \in (0,2)$, $m>0$, generated by the
Schr\"odinger operator $(-\Delta + m^{2/\alpha})^{\alpha/2} - m + V$. It is proven in \cite{bib:KS} that in 
case of non-negative potentials comparable on unit balls the corresponding Schr\"odinger semigroup is IUC 
if and only if $\lim_{|x| \to \infty} \frac{V(x)}{|x|} = \infty$. Using estimates on the transition density
\cite{bib:Szt} we obtain
$$
C^{(1)} e^{-C^{(2)} |x|} \leq \pr^x(X^m_t \in D)\leq C^{(3)} e^{-C^{(4)} |x|},
$$
where $C^{(1)},...,C^{(4)}>0$ depend on $m,t$ and $D$ only, i.e., indeed $-\log \pr^x(X^m_t \in D)
\asymp |x|$.
\end{enumerate}
\label{whyeasier}
}
\end{remark}

\section{Gibbs measures for symmetric $\alpha$-stable processes}
\subsection{Existence of fractional $P(\phi)_1$-processes}
\noindent
In this section we prove that provided Assumption \ref{gs} holds, there exists a probability measure $\mu$
on $(D_{\rm r}(\R,\R^d),{\cB}(D_{\rm r}(\R,\R^d))$ such that for $f,g \in L^2(\Rd)$ and Kato-decomposable
potential $V$
\begin{equation}
(f, e^{-t \widetilde{H}_{\alpha}}g)= \ex_\mu \left[\overline{f(\widetilde X_0)}g(\widetilde X_t)\right],
\quad t \geq 0.
\end{equation}
We will identify the probability measure $\mu$ as the measure of the Markov process
$(\widetilde X_t)_{t \in \R}$ derived from the symmetric $\alpha$-stable process $(X_t)_{t \in \R}$ under $V$,
which we call \emph{fractional $P(\phi)_1$-process}. In the next subsection we show that, in fact, $\mu$ is a
Gibbs measure with respect to the stable bridge measure and potential $V$, and will analyze its uniqueness and
support properties.

For an interval or union of intervals $I \subset \R$ we denote by $\Omega_I = D_{\rm r}(I,\R^d)$ the space
of right continuous functions from $I$ to $\R^d$ with left limits, and by $\cF_I$ the $\sigma$-field generated
by the coordinate process $\omega(t)$, $\omega \in \Omega_I$, $t \in I$. Also, we will use the notations
$\Omega := \Omega_{\R}$, $\cF := \cF_{\R}$, and consider
a two-sided $\alpha$-stable process $\proo X$ with path space $\Omega$ as defined in Section 2.2.

\begin{theorem}
\label{th:exphi1}
Let $V$ be a Kato-decomposable potential, $(\widetilde{T}_t)_{t \geq 0}$ be the corresponding intrinsic
fractional Feynman-Kac semigroup. Denote by $\widetilde{X}_t(\omega) = \omega(t)$ the coordinate process on
$(\Omega, \cF)$ and consider the filtrations $\left(\cF_t^{+}\right)_{t \geq 0} = \sigma\left(\widetilde{X}_s: 0 \leq s \leq t\right)$, $\left(\cF_t^{-}\right)_{t \leq 0} = \sigma\left(\widetilde{X}_s: t \leq s \leq 0\right)$. Then for all $x \in \R^d$ there exists a probability measure $\mu^x$ on $(\Omega, \cF)$,
satisfying the properties below:
\begin{itemize}
\item[(1)]
$\mu^x\left(\widetilde{X}_t = x \right) = 1$.

\item[(2)]
\emph{Reflection symmetry:} $(\widetilde{X}_t)_{t \geq 0}$ and $(\widetilde{X}_t)_{t \leq 0}$ are
independent and
$$
\widetilde X_{-t} \stackrel{\rm d}{=} \widetilde X_t, \quad t \in \R.
$$
\item[(3)]
\emph{Markov property:}
$(\widetilde{X}_t)_{t \geq 0}$ is a Markov process with respect to $\left(\cF_t^{+}\right)_{t \geq 0}$,
and $(\widetilde{X}_t)_{t \leq 0}$ is a Markov processes with respect to $\left(\cF_t^{-}\right)_{t \leq 0}$.

\item[(4)]
\emph{Shift invariance:}
Let $-\infty < t_0 \leq t_1 \leq ... \leq t_n < \infty$. Then the finite dimensional distributions with respect to
the stationary distribution $\varphi_0^2 dx$ are given by
\begin{align}
\label{eq:fddist}
\int_{\R^d} \ex_{\mu^x}\left[\prod_{j=0}^n f_j(\widetilde{X}_{t_j})\right] \varphi^2_0(x) dx =
\left(f_0,\: \widetilde{T}_{t_1-t_0}\: f_1 ...\: \widetilde{T}_{t_n-t_{n-1}}\: f_n \right)_
{L^2(\R^d, \varphi_0^2 dx)}
\end{align}
for $f_j \in L^{\infty}(\R^d)$, $j=0,..., n$, and are shift invariant, i.e.,
$$
\int_{\R^d} \ex_{\mu^x}\left[\prod_{j=0}^n f_j(\widetilde{X}_{t_j})\right] \varphi^2_0(x) dx =
\int_{\R^d} \ex_{\mu^x}\left[\prod_{j=0}^n f_j(\widetilde{X}_{t_j+s})\right] \varphi^2_0(x) dx, \quad s \in \R.
$$
\end{itemize}
\end{theorem}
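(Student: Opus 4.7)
The plan is to construct $\mu^x$ as a change of measure with respect to the symmetric $\alpha$-stable path measure of Section 2.2, using the Doob $h$-transform determined by the positive eigenfunction $\varphi_0$. On the half-line $[0,\infty)$ the measure arises from a martingale modification of $\pr^x$, and on the negative half-line it is obtained by gluing an independent copy, exactly as in the construction of the two-sided stable process.

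I would first observe that $\widetilde{u}(t,x,y)\varphi_0^2(y)\,dy$ is a conservative Markov transition kernel. The identity $T_t\varphi_0 = e^{-\lambda_0 t}\varphi_0$ combined with Lemma~\ref{lm:kernel} shows $\widetilde{T}_t 1 = 1$, Chapman--Kolmogorov follows from $T_sT_t=T_{s+t}$, and the symmetry $\widetilde{u}(t,x,y)=\widetilde{u}(t,y,x)$ yields that $\varphi_0^2\,dx$ is a reversible stationary measure for $\{\widetilde{T}_t: t\ge 0\}$. Next I would introduce the positive process
$$
M_t \,:=\, \frac{e^{\lambda_0 t}}{\varphi_0(x)}\, e_V(t)\, \varphi_0(X_t),\qquad t\ge 0,
$$
and check that it is a $\pr^x$-martingale with respect to $(\cF_t^+)_{t\geq 0}$ by combining the Markov property of $(X_t,\pr^x)_{t\ge 0}$ with $(T_{t-s}\varphi_0)(X_s)=e^{-\lambda_0(t-s)}\varphi_0(X_s)$. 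Setting $d\mu^{x,+}|_{\cF_t^+}:=M_t\,d\pr^x|_{\cF_t^+}$ produces a consistent family of probability measures on $(\cF_t^+)_{t\ge 0}$ which extends, by Kolmogorov's theorem, to a single probability measure $\mu^{x,+}$ on $(\Omega_{[0,\infty)},\sigma(\cF_t^+:t\ge 0))$. A direct calculation using Lemma~\ref{lm:kernel}(5) shows that the finite-dimensional marginals of $\mu^{x,+}$ coincide with those generated by the transition kernel $\widetilde{u}(t,x,y)\varphi_0^2(y)\,dy$ starting from $\delta_x$; this yields the Markov property (3) on the positive half-line.

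To extend to a measure on $(\Omega,\cF)$, I would take an independent copy of $\mu^{x,+}$ along the time-reversed half-line $(-\infty,0]$ and identify the two paths at time $0$ with common value $x$, mirroring the construction of $(X_t,\pr^x)_{t\in \R}$ in Section 2.2. Property (1) then holds by construction, (2) follows from the independence of the two halves together with $\widetilde{u}(t,x,y)=\widetilde{u}(t,y,x)$, and the Markov property on the negative axis transfers from the positive one by reflection. To obtain (4), I would use that $\widetilde{T}_t$ is self-adjoint on $L^2(\R^d,\varphi_0^2\,dx)$ with $\widetilde{T}_t 1=1$, so that $\varphi_0^2\,dx$ is stationary and the finite-dimensional identity \eqref{eq:fddist} reduces to a telescoping product of $\widetilde{T}_{t_j-t_{j-1}}$-actions. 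C\`adl\`ag regularity of the coordinate process under $\mu^x$ is inherited from that under $\pr^x$ since the change of measure is absolutely continuous on each $\cF_t^+$.

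The main technical obstacle will be the martingale change-of-measure step: one must verify that $M_t$ is a genuine martingale (and not merely a local one) and that the resulting projective family extends to a path-space measure while preserving c\`adl\`ag regularity. This relies on the Kato-type uniform bound \eqref{eq:FKFest}, on the boundedness and continuity of $\varphi_0$ established just before Lemma~\ref{lm:proj}, and on the observation that absolute continuity on each finite-horizon $\sigma$-field transports sample-path regularity from $(X_t,\pr^x)_{t\ge 0}$ to $(\widetilde X_t,\mu^{x,+})_{t\ge 0}$.
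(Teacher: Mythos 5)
Your proposal is correct but genuinely different from the paper's construction. The paper's argument is a four-step classical one: it defines the finite-dimensional marginals via the intrinsic kernel, applies the Kolmogorov extension theorem on the product space $(\R^d)^{[0,\infty)}$, then proves by hand the existence of a c\`adl\`ag modification using stochastic continuity and a Dynkin--Kinney-type oscillation estimate (Lemma \ref{lm:sc}, following \cite[Lm. 11.2--11.4]{bib:Sat}), and finally conditions on $\widetilde Y_0=x$ and glues two half-lines. Your route instead goes through the Doob $h$-transform: the process $M_t = e^{\lambda_0 t}\varphi_0(x)^{-1}\,e_V(t)\,\varphi_0(X_t)$ is a true nonnegative $\pr^x$-martingale with $\ex^x[M_t]=1$ (the eigenvalue relation $T_t\varphi_0=e^{-\lambda_0 t}\varphi_0$ makes the martingale identity a one-line Markov-property computation), and defining $d\mu^{x,+}|_{\cF_t^+}=M_t\,d\pr^x|_{\cF_t^+}$ gives a projective family on the fixed Polish path space $D_{\rm r}([0,\infty),\R^d)$, so the measure lives on c\`adl\`ag paths by fiat and the regularization step of the paper is skipped. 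The finite-dimensional marginals then telescope, via $u(s,y,z)=e^{-\lambda_0 s}\widetilde u(s,y,z)\varphi_0(y)\varphi_0(z)$, into the Markov chain generated by $\widetilde u(t,x,y)\varphi_0^2(y)\,dy$, which is the paper's $\nu_{t_0,\dots,t_n}$; reflection, Markov, and shift invariance follow exactly as you indicate.

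One point deserves a sharper statement than you give. The extension of the consistent family $(\mu^{x,+}|_{\cF_t^+})_{t>0}$ to a single measure on $(\Omega_{[0,\infty)},\sigma(\cF_t^+:t\ge 0))$ is not literally the Kolmogorov extension theorem (which lives on product $\sigma$-fields over index sets), since $\mu^{x,+}$ is in general \emph{not} absolutely continuous with respect to $\pr^x$ on the terminal $\sigma$-field $\cF_\infty^+$, so you cannot simply write $M_\infty\,d\pr^x$. What is needed is the projective-limit extension theorem for an increasing family of $\sigma$-fields on a standard Borel space (Parthasarathy's theorem, or the form used in Stroock--Varadhan and F\"ollmer's exit-measure construction). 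That result is exactly tailored to this situation and closes the gap; but it should be named, since the slogan ``absolute continuity on each finite horizon transports path regularity'' is a consequence of working on the fixed Skorokhod space from the start, not of any global absolute continuity. With that citation added, your argument is complete and is in fact shorter than the paper's, at the cost of invoking a slightly less elementary extension theorem than plain Kolmogorov.
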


\medskip
We proceed now to prove Theorem \ref{th:exphi1} in several steps. Let $0 \leq t_0 \leq t_1 \leq ... \leq t_n$
and let the set function $\nu_{t_0, ..., t_n} : \times_{j=0}^n \cB(\R^d) \rightarrow \R$ be defined by
\begin{align}
\label{def:fdd1}
\nu_{t_0, ..., t_n}(\times_{j=0}^n A_j):= \left(\1_{A_0},\: \widetilde{T}_{t_1-t_0}\: \1_{A_1} ...\:
\widetilde{T}_{t_n-t_{n-1}}\: \1_{A_n} \right)_{L^2(\R^d, \varphi_0^2 dx)}
\end{align}
and
\begin{align}
\label{def:fdd2}
\nu_{t_0}(A) = \left(\1,\: \widetilde{T}_{t_0}\: \1_A \right)_{L^2(\R^d, \varphi_0^2 dx)} =
\left(\1,\: \1_A \right)_{L^2(\R^d, \varphi_0^2 dx)}.
\end{align}

\medskip
\noindent
\emph{(Step 1)} Denote $\cL = \left\{L \subset \R: \card(L) < \infty\right\}$. It can be verified 
directly that the family of set functions $(\nu_L)_{L \in \cL}$ given above satisfies the consistency 
condition
$$
\nu_{t_0, ..., t_{n+m}}\left((\times_{j=0}^n A_j) \times (\times_{j=n+1}^{n+m} \R^d)\right) =
\nu_{t_0, ..., t_n}(\times_{j=0}^n A_j).
$$
Hence by the Kolmogorov extension theorem there exists a probability measure $\nu_\infty$ on the space
$\left((\R^d)^{[0,\infty)}, \cM \right)$, where $\cM$ is the $\sigma$-field on $(\R^d)^{[0,\infty)}$ 
generated by all cylinder sets, such that
\begin{align*}
& \nu_t(A) = \ex_{\nu_\infty}[\1_A(Y_t)], \\
& \nu_{t_0,...,t_n}\left( \times_{j=0}^n A_j \right) = \ex_{\nu_\infty}\left[\prod_{j=0}^n \1_{A_j}(Y_{t_j})\right],
\quad n \geq 1 \ ,
\end{align*}
where $Y_t(\omega) = \omega(t)$, $\omega \in (\R^d)^{[0,\infty)}$, is the coordinate process. Thus the stochastic
process $(Y_t)_{t\geq0}$ on the probability space $\left((\R^d)^{[0,\infty)}, \cM, \nu_\infty \right)$
satisfies
\begin{align}
\label{eq:fdd1}
& \ex_{\nu_\infty}\left[\prod_{j=0}^n f_j(Y_{t_j})\right] = \left(f_0,\: \widetilde{T}_{t_1-t_0}\: f_1 ...\:
\widetilde{T}_{t_n-t_{n-1}}\: f_n \right)_{L^2(\R^d, \varphi_0^2 dx)} \\
\label{eq:fdd2}
& \ex_{\nu_\infty}[f_0(Y_{t_0})] = \left(\1,\: \widetilde{T}_{t_0}\: f_0 \right)_{L^2(\R^d, \varphi_0^2 dx)} =
\left(\1,\: f_0 \right)_{L^2(\R^d, \varphi_0^2 dx)}
\end{align}
for $f_j \in L^{\infty}(\R^d)$, $j=0,1,...,n$, $0 \leq t_0 \leq t_1 \leq ... \leq t_n$.
Notice that the right hand side of \eqref{eq:fdd1} can be expressed directly in terms of symmetric $\alpha$-stable
process $(X_t,\pr^x)_{t \geq 0}$, i.e.,
\begin{align}
\label{eq:fdddir}
\ex_{\nu_\infty}\left[\prod_{j=0}^n f_j(Y_{t_n})\right] =
\int_{\R^d} \varphi_0(x) \ex^x\left[e^{-\int_0^{t_n} (V(X_s)-\lambda_0) ds}
\prod_{j=0}^n f_j(X_{t_j})\varphi_0(X_{t_n})\right]dx.
\end{align}

\medskip
\noindent
\emph{(Step 2)} Next we prove the existence of a c\`adl\`ag and a c\`agl\`ad version of the above process.
In this step we check the standard Dynkin-Kinney type condition \cite[p. 59-62]{bib:Sat} for this process. Let $M
\subset [0,\infty)$ and $\varepsilon >0$ and fix $\omega$. The function $Y_t(\omega)$ is said to have
$\varepsilon$-oscillation $n$ times in $M$ if there are $t_0 < t_1 < ... < t_n$ in $M$ such that $|Y_{t_j}(\omega)
- Y_{t_{j-1}}(\omega)|>\varepsilon$ for $j=1,...,n$. Also, $Y_t(\omega)$ has $\varepsilon$-oscillation infinitely
often in $M$ if for every $n$, $Y_t(\omega)$ has $\varepsilon$-oscillation $n$ times in $M$. Let
\begin{align*}
& \Omega_2 = \left\{\omega: \lim_{s \in \qpr, s \downarrow t} Y_s(\omega) \ \text{exists in} \ \R^d \
\text{for all} \ t\geq0 \ \text{and} \ \lim_{s \in \qpr, s \uparrow t} Y_s(\omega) \ \text{exists in} \ \R^d
\ \text{for all} \ t>0 \right\}, \\
& A_{N, k} = \left\{\omega: Y_t(\omega) \ \text{does not have} \ \frac{1}{k} \text{-oscillation infinitely often in}
\ [0,N] \cap \qpr \right\}, \\
& \Omega_2^{'} = \bigcap_{N=1}^{\infty} \bigcap_{k=1}^{\infty} A_{N,k}.
\end{align*}
Clearly, $\Omega_2^{'} \in \cF$. Moreover, it is proven in \cite[Lemma 11.2]{bib:Sat} that $\Omega_2^{'} \subset
\Omega_2$. Define
$$
B(p,\varepsilon,M) = \left\{\omega: Y_t(\omega) \ \text{has} \ \varepsilon \text{-oscillation} \ p \ \text{times in}
\ M\right\}.
$$

\begin{lemma}
\label{lm:sc}
The following assertions follow:
\begin{itemize}
\item[(1)] For every $\varepsilon > 0$ we have
$$
\nu_\infty\left(\left\{\omega: |Y_t(\omega) - Y_s(\omega)| > \varepsilon \right\}\right) \to 0
\quad  \text{as} \quad  |t-s| \to 0.
$$
\item[(2)] $\nu_\infty\left(\Omega^{'}_2\right) = 1.$
\end{itemize}
\end{lemma}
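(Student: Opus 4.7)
The plan is to exploit the closed form of the two-time distribution of $(Y_t)_{t\ge 0}$ implicit in (\ref{eq:fdd1}) and (\ref{eq:fdddir}), and to reduce both claims to stochastic continuity of the free symmetric $\alpha$-stable process $\proo X$.

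For (1), set $h=t-s$ with $0\le s\le t$. The two-time density of $(Y_s,Y_t)$ under $\nu_\infty$ is $\varphi_0(x)\,e^{\lambda_0 h}\,u(h,x,y)\,\varphi_0(y)$, so
\[
\nu_\infty(|Y_t-Y_s|>\varepsilon) \;=\; e^{\lambda_0 h}\!\!\int\!\!\!\int \varphi_0(x)\,u(h,x,y)\,\varphi_0(y)\,\1_{\{|y-x|>\varepsilon\}}\,dy\,dx.
\]
I would dominate $u(h,x,y)\le p(h,y-x)\,\ex^{x,0}_{y,h}[e_{-V_-}(h)]$ using $V_+\ge 0$ together with (\ref{eq:FKF}) and (\ref{eq:xmeas}), then collapse the $y$-integral by (\ref{eq:cnd}) (with $Y=e_{-V_-}(h)$ and $g(y)=\varphi_0(y)\1_{\{|y-x|>\varepsilon\}}$) into $\ex^x[e_{-V_-}(h)\,\varphi_0(X_h)\,\1_{\{|X_h-x|>\varepsilon\}}]$. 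A Cauchy--Schwarz on the path expectation yields the factor $\pr^0(|X_h|>\varepsilon)^{1/2}$ (by translation invariance of $\proo X$), and a further Cauchy--Schwarz on the $dx$-integral with $\|\varphi_0\|_2=1$ reduces the remaining term to $\bigl(\int \ex^x[e_{-2V_-}(h)\,\varphi_0^2(X_h)]\,dx\bigr)^{1/2}$. By symmetry of the Feynman--Kac kernel of the Kato-decomposable potential $-2V_-$ this last integral equals $\int\varphi_0^2(y)\,\ex^y[e_{-2V_-}(h)]\,dy$, which is bounded via (\ref{eq:FKFest}). The upshot is
\[
\nu_\infty(|Y_t-Y_s|>\varepsilon) \;\le\; C_{V,h}\,\pr^0(|X_h|>\varepsilon)^{1/2},
\]
with $C_{V,h}$ locally bounded in $h$, and (1) follows from stochastic continuity of $\proo X$.

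For (2), the estimate of (1) is uniform in $s$ because the two-time density depends only on $h$; moreover $(Y_t)_{t\ge 0}$ under $\nu_\infty$ is a Markov process with transition semigroup $\widetilde T_h$ and stationary distribution $\varphi_0^2\,dx$, by (\ref{eq:fdd1}) together with the semigroup property of $\widetilde T_h$. These two properties are precisely the hypotheses of the Dynkin--Kinney criterion in \cite[pp.~59--62]{bib:Sat}, of which the deterministic inclusion $\Omega_2'\subset\Omega_2$ cited in the excerpt is the first half; the probabilistic half then yields $\nu_\infty(A_{N,k})=1$ for each $N,k\in\N$, and intersecting over $N,k$ gives $\nu_\infty(\Omega_2')=1$. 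The verification of the probabilistic half proceeds by controlling $\nu_\infty(B(p,1/k,L))$ for finite $L\subset[0,N]\cap\qpr$ via iterated applications of the Markov property combined with the one-step bound from (1), followed by a limit over finite $L$ and then $p\to\infty$. The main technical subtlety lies in balancing the combinatorial count of the $p$ oscillation sites against the small-$h$ decay $\pr^0(|X_h|>1/k)^{1/2}$, which is the content of the classical Dynkin--Kinney argument and which I would cite rather than re-prove.
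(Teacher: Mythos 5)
Your argument for assertion (1) is essentially the same as the paper's: both express $\nu_\infty(|Y_t-Y_s|>\varepsilon)$ via (\ref{eq:fdddir}), apply Cauchy--Schwarz to the Feynman--Kac expectation, and reduce to the stochastic continuity of the free stable process through $\pr^0(|X_{t-s}|>\varepsilon)^{1/2}$. Your variant, which does a second Cauchy--Schwarz in $dx$ against $\|\varphi_0\|_2=1$ and then uses symmetry of the kernel of $e_{-2V_-}$, is fine and has the mild advantage of not invoking $\|\varphi_0\|_1<\infty$, which the paper's version does; this is a cosmetic difference, not a gap.

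For assertion (2), however, your sketch does not close. You propose to regard $(Y_t)_{t\geq 0}$ under $\nu_\infty$ as a Markov process with semigroup $\widetilde T_h$ and then invoke the Dynkin--Kinney criterion, feeding it the one-step estimate from (1) together with iterations of the Markov property. The difficulty is that (1) is an estimate under the \emph{stationary} measure $\varphi_0^2\,dx$, whereas the Markov-chain iteration behind the Dynkin--Kinney bound requires a one-step estimate that is uniform in the starting point, namely
$\sup_x \mu^x\bigl(|Y_h - x| > \varepsilon\bigr) \to 0$ as $h \to 0$.
Writing this quantity out gives $e^{\lambda_0 h}\varphi_0(x)^{-1}\ex^x\bigl[e^{-\int_0^h V}\varphi_0(X_h)\1_{\{|X_h-x|>\varepsilon\}}\bigr]$, and after Cauchy--Schwarz one is left to control $\varphi_0(x)^{-1}\bigl(\ex^x[e^{-2\int_0^h V}\varphi_0^2(X_h)]\bigr)^{1/2}$ uniformly in $x$; this is essentially a version of the intrinsic ultracontractivity bound $T_h\1 \lesssim \varphi_0$ (cf.\ Theorem \ref{th:charIUC}), which is \emph{not} part of the standing hypotheses for Lemma \ref{lm:sc}. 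For Lévy processes the corresponding uniformity is automatic by translation invariance, but $(Y_t,\widetilde T_h)$ has no such structure, so the classical argument does not apply verbatim. The paper sidesteps this entirely: it never attempts a Dynkin--Kinney estimate for $(Y_t)$ directly, but instead writes $\nu_\infty(B(p,1/k,\{t_1,\dots,t_n\}))$ via (\ref{eq:fdddir}), applies Cauchy--Schwarz to the Feynman--Kac weight, and thereby bounds the whole expression by a constant times $\sup_x\bigl(\pr^x(B(p,1/k,\{t_1,\dots,t_n\}))\bigr)^{1/2}$ for the \emph{free} process $\proo X$. For $\proo X$ the iterated oscillation bound of \cite[Lemma~11.4]{bib:Sat}, namely $\pr^x(B(p,1/k,\cdot)) \le \bigl(2\pr^0(X_{N/l}\in B^c(0,1/(4k)))\bigr)^p$, is genuinely uniform in $x$ by translation invariance, and choosing $l$ so that the base is $<1$ finishes the argument as $p\to\infty$. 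You should incorporate this Cauchy--Schwarz transfer to $\proo X$ rather than relying on a direct Dynkin--Kinney argument for $(Y_t)$.
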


\begin{proof}
To show (1) let $0 \leq s < t$. By \eqref{eq:fdddir}
\begin{align*}
\nu_\infty& \left(\left\{\omega: |Y_t(\omega) - Y_s(\omega)| > \varepsilon \right\}\right)  =
\int_{\R^d} \varphi_0(x) \ex^x\left[e^{-\int_0^{t-s} (V(X_r) - \lambda_0) dr}  \varphi_0(X_{t-s})
\1_{B^c(x, \varepsilon)}(X_{t-s})\right] dx \\
& \leq
\int_{\R^d} \varphi_0(x) dx \sup_{x \in \R^d} \left(\ex^x\left[e_{2(V-\lambda_0)}(t-s) \varphi_0^2(X_{t-s})\right]\right)^{1/2}
\left(\pr^x(X_{t-s} \in B^c(x,\varepsilon))\right)^{1/2}\\
& \leq
\left\|\varphi_0\right\|_1 \left\|\varphi_0^2\right\|_{\infty} \left(e^{C^{(1)}_V + C^{(2)}_V(t-s)}\right)^{1/2}
\left(\pr^0(X_{t-s} \in B^c(0,\varepsilon))\right)^{1/2},
\end{align*}
which goes to $0$ as $|t-s| \to 0$ by stochastic continuity of the symmetric stable process $\pro X$.

To prove (2) observe that it suffices to show that $\nu_\infty(A_{N,k}^c) = 0$ for any fixed $N$ and $k$. Again,
using stochastic continuity of $\pro X$, choose $l$ large enough so that
$$
\pr^0\left(X_{N/l} \in B^c\left(0,\frac{1}{4k}\right)\right) < 1/2.
$$
We have
\begin{align*}
\nu_\infty(A_{N,k}^c) & = \nu_\infty\left(\left\{\omega: Y_t(\omega) \ \text{has $\frac{1}{k}$-oscillation infinitely
often in} \ [0,N]\cap \qpr \right\}\right) \\
& \leq\sum_{j=1}^l \nu_\infty\left(\left\{\omega: Y_t(\omega) \ \text{has $\frac{1}{k}$-oscillation infinitely often
in} \ \left[\frac{j-1}{l}N,\frac{j}{l}N \right]\cap \qpr \right\}\right) \\
& = \sum_{j=1}^l \lim_{p \to \infty} \nu_\infty\left(B\left(p, \frac{1}{k}, \left[\frac{j-1}{l}N,\frac{j}{l}N \right]
\cap \qpr \right)\right).
\end{align*}
Enumerating the elements of $\left[\frac{j-1}{l}N,\frac{j}{l}N \right] \cap \qpr$ as $t_1, t_2, ...$, we have
$$
\nu_\infty \left(B\left(p, \frac{1}{k}, \left[\frac{j-1}{l}N,\frac{j}{l}N \right] \cap \qpr \right)\right) =
\lim_{n \to \infty} \nu_\infty\left(B\left(p, \frac{1}{k}, \left\{t_1, ..., t_n \right\} \right)\right).
$$
Moreover, by \eqref{eq:fdddir} we get
\begin{align*}
\nu_\infty & \left(B\left(p, \frac{1}{k}, \left\{t_1, ..., t_n \right\} \right)\right) \\
& = \int_{\R^d} \varphi_0(x) \ex^x\left[e^{-\int_0^{N/l} (V(X_s) - \lambda_0) ds}
\1_{B\left(p, \frac{1}{k}, \left\{t_1, ..., t_n \right\} \right)} \varphi_0\left(X_{N/l}\right)\right] dx\\
& \leq \int_{\R^d} \varphi_0(x) dx \sup_{x \in \R^d} \left(\ex^x\left[e_{2(V-\lambda_0)}
\left(N/l\right) \varphi_0^2(X_{N/l})\right]\right)^{1/2} \left(\pr^x\left(B\left(p, \frac{1}{k},
\left\{t_1, ..., t_n \right\} \right)\right)\right)^{1/2}\\
& \leq \left\|\varphi_0\right\|_1 \left\|\varphi_0^2\right\|_{\infty}
\left(e^{C^{(1)}_V + C^{(2)}_V(N/l)}\right)^{1/2} \sup_{x \in \R^d} \left(\pr^x\left(B\left(p, \frac{1}{k},
\left\{t_1, ..., t_n \right\} \right)\right)\right)^{1/2}.
\end{align*}
Since by \cite[Lm. 11.4]{bib:Sat}
$$
\pr^x\left(B\left(p, \frac{1}{k}, \left\{t_1, ..., t_n \right\} \right)\right) \leq \left(2
\pr^0\left(X_{N/l} \in B^c\left(0,\frac{1}{4k}\right)\right) \right)^p,
$$
we have $\nu_\infty(A_{N,k}^c) = 0$ and the lemma is proved.
\end{proof}

\begin{lemma}
The process $(Y_t)_{t \geq 0}$ has a right continuous version with left limits (i.e., c\`adl\`ag) and a left
continuous version with right limits (i.e., c\`agl\`ad) with respect to the measure $\nu_\infty$.
\end{lemma}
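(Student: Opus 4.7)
The plan is to follow the standard Dynkin--Kinney regularization procedure, which is made available by the two conclusions of Lemma \ref{lm:sc} (stochastic continuity of $(Y_t)_{t\geq 0}$ and full measure of the no-infinite-oscillation set $\Omega_2'$). The key observation is that the inclusion $\Omega_2' \subset \Omega_2$ quoted from \cite[Lemma 11.2]{bib:Sat} already guarantees that for $\nu_\infty$-almost every $\omega$, the one-sided limits along rational times exist at every $t\geq 0$. So on a set of full measure we can \emph{define} candidate modifications pathwise.

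First I would put, for each $\omega \in \Omega_2'$,
\begin{equation*}
\widetilde{Y}_t(\omega) = \lim_{\substack{s\in\mathbb{Q}\\ s\downarrow t}} Y_s(\omega), \quad t \geq 0,
\qquad
\widehat{Y}_t(\omega) = \lim_{\substack{s\in\mathbb{Q}\\ s\uparrow t}} Y_s(\omega), \quad t > 0,
\end{equation*}
and extend $\widetilde{Y}$ and $\widehat{Y}$ arbitrarily (say by $0$) on the $\nu_\infty$-null set $(\Omega_2')^c$. Basic properties of right/left limits through a dense set then force $\widetilde{Y}$ to be c\`adl\`ag and $\widehat{Y}$ to be c\`agl\`ad on $\Omega_2'$, so everywhere up to a $\nu_\infty$-null set; this is exactly the content of the argument surrounding \cite[Lemma 11.2]{bib:Sat}, adapted here to our measure $\nu_\infty$.

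The remaining point, which is where the work actually lives, is to verify that $\widetilde{Y}$ and $\widehat{Y}$ are genuine modifications of $Y$, i.e., $\nu_\infty(\widetilde{Y}_t = Y_t) = 1$ and $\nu_\infty(\widehat{Y}_t = Y_t) = 1$ for each fixed $t$. For $\widetilde{Y}$ I pick a sequence $s_n \in \mathbb{Q}$ with $s_n \downarrow t$ and write, for arbitrary $\varepsilon>0$,
\begin{equation*}
\nu_\infty\bigl(|\widetilde{Y}_t - Y_t| > \varepsilon\bigr) \le \liminf_{n\to\infty} \nu_\infty\bigl(|Y_{s_n} - Y_t| > \varepsilon/2\bigr),
\end{equation*}
which tends to $0$ by Lemma \ref{lm:sc}(1). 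Letting $\varepsilon\downarrow 0$ gives the claim. The argument for $\widehat{Y}$ with $s_n \uparrow t$ is identical.

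The main obstacle is simply bookkeeping: one must be careful that the pathwise constructions make sense (rational limits exist on $\Omega_2'$, and $\Omega_2'$ is $\cF$-measurable, which is already noted in the excerpt) and that the modification identity is first checked on a countable dense set and then propagated to every $t$ via stochastic continuity. Once this is done, $\widetilde{Y}$ is the desired c\`adl\`ag version and $\widehat{Y}$ is the desired c\`agl\`ad version of $(Y_t)_{t\geq 0}$ under $\nu_\infty$.
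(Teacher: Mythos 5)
Your proposal is correct and matches the paper's approach: the paper's proof is a one-line deferral to Lemma \ref{lm:sc} together with ``the standard arguments in the proof of \cite[Lm. 11.3]{bib:Sat}'', and your argument is exactly an unpacking of that standard Dynkin--Kinney regularization (pathwise definition of right/left rational limits on the full-measure set $\Omega_2'$, then verification via stochastic continuity that these are genuine modifications). One minor remark: your closing sentence about first checking the modification identity on a countable dense set and then propagating is unnecessary, since the Fatou/stochastic-continuity argument you already gave works directly for each fixed $t$.
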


\begin{proof}
The existence of a c\`adl\`ag version is a consequence of Lemma \ref{lm:sc} and the standard arguments in the
proof of \cite[Lm. 11.3]{bib:Sat}. In the same way we show the existence of a c\`agl\`ad version of the process
$(Y_t)_{t \geq 0}$.
\end{proof}
\noindent
Let now $(Y^{'}_t)_{t \geq 0}$ be the c\`adl\`ag version of $(Y_t)_{t \geq 0}$ on
$\left((\R^d)^{[0,\infty)}, \cM, \nu_\infty \right)$. Recall that $\Omega_{[0,\infty)} =
D_{\rm r}(\R^{+}, \R^d)$. Denote the image measure of $\nu_\infty$ on $(\Omega_{[0,\infty)}, \cF_{[0,\infty)})$
by
$$
\cQ  = \nu_\infty \circ (Y^{'}_t)^{-1}.
$$
We identify the coordinate process by $\widetilde{Y}_t(\omega) = \omega(t)$, for $\omega \in \Omega_{[0,\infty)}$.
Thus we have constructed a random process $(\widetilde{Y}_t)_{t \geq 0}$ on
$(\Omega_{[0,\infty)}, \cF_{[0,\infty)}, \cQ)$ such that $Y'_t \stackrel{\rm d}{=} \widetilde{Y_t}$. Then
\eqref{eq:fdd1} and \eqref{eq:fdd2} can be expressed in terms of
$\pro {\widetilde{Y}}$ as
\begin{align*}
& \left(f_0,\: \widetilde{T}_{t_1-t_0}\: f_1 ...\: \widetilde{T}_{t_n-t_{n-1}}\: f_n \right)_
{L^2(\R^d, \varphi_0^2 dx)} = \ex_{\cQ}\left[\prod_{j=0}^n f_j(\widetilde{Y}_{t_n})\right] ,\\
& \left(\1,\: \widetilde{T}_{t_0}\: f_0 \right)_{L^2(\R^d, \varphi_0^2 dx)} =
\left(\1,\: f_0 \right)_{L^2(\R^d, \varphi_0^2 dx)} = \ex_{\cQ}[f_0(\widetilde{Y}_t)].
\end{align*}
Note that by considering the c\`agl\`ad version of the process $(Y_t)_{t \geq 0}$, we can also construct a
random process on the space of the left continuous functions with left limits $D_l(\R^{+},\R^d)$ satisfying
the above equalities.

\medskip
\noindent
\emph{(Step 3)} Define a family of measures on $\left(\Omega_{[0,\infty)}, \cF_{[0,\infty)} \right)$ by
$$
\cQ^x (\;\cdot\;) = \cQ(\;\cdot\;|\widetilde{Y}_0=x), \quad x \in \R^d.
$$
Since the distribution of $Y_0$ is $\varphi^2_0(x)dx$, we have $\cQ(A)=\int_{\R^d} \varphi_0^2(x)
\ex_{\cQ^x}[\1_A] dx$. Then the process $(\widetilde{Y}_t)_{t \geq 0}$ on
$\left(\Omega_{[0,\infty)}, \cF_{[0,\infty)}, \cQ^x \right)$ satisfies
\begin{align}
\label{eq:fdd3}
& \left(f_0,\: \widetilde{T}_{t_1-t_0}\: f_1 ...\: \widetilde{T}_{t_n-t_{n-1}}\: f_n \right)_
{L^2(\R^d, \varphi_0^2 dx)}
= \int_{\R^d} \varphi_0^2(x)\ex_{\cQ^x}\left[\prod_{j=0}^n f_j(\widetilde{Y}_{t_j})\right] dx,\\
\label{eq:fdd4}
& \left(\1,\: \widetilde{T}_{t_0}\: f_0 \right)_{L^2(\R^d, \varphi_0^2 dx)} =
\left(\1,\: f_0 \right)_{L^2(\R^d, \varphi_0^2 dx)} =
\int_{\R^d} \varphi_0^2(x)  \ex_{\cQ^x}[f_0(\widetilde{Y}_t)] dx.
\end{align}

\begin{lemma}
$(\widetilde{Y}_t)_{t \geq 0}$ is a Markov process on
$\left(\Omega_{[0,\infty)}, \cF_{[0,\infty)}, \cQ^x \right)$ with respect to the natural filtration
$(\cG_t)_{t \geq 0}$, where $\cG_t = \sigma\left(\widetilde{Y}_s, 0 \leq s \leq t \right)$.
\end{lemma}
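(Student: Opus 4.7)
The plan is to derive the Markov property from the explicit form of the finite-dimensional distributions \eqref{eq:fdd3}--\eqref{eq:fdd4} together with the semigroup law of $\widetilde T_t$. Concretely, I aim to show that for every $0 \le s \le t$ and every bounded Borel $f$,
$$
\ex_{\cQ^x}\left[f(\widetilde Y_t) \mid \cG_s\right] = (\widetilde T_{t-s}f)(\widetilde Y_s) \quad \cQ^x\text{-a.s.},
$$
and then extend from $\varphi_0^2\,dx$-a.e.\ $x$ to every $x \in \R^d$ by continuity.

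First I would rewrite \eqref{eq:fdd3} in terms of the kernel $\widetilde u(t,x,y) = e^{\lambda_0 t} u(t,x,y)/(\varphi_0(x)\varphi_0(y))$ of $\widetilde T_t$. For $0 = t_0 \le t_1 \le \cdots \le t_n$ and bounded Borel $f_0,\ldots,f_n$, this gives
$$
\int_{\R^d} \varphi_0^2(x)\,\ex_{\cQ^x}\!\left[\prod_{j=0}^n f_j(\widetilde Y_{t_j})\right] dx \;=\; \int_{(\R^d)^{n+1}} f_0(x_0)\varphi_0^2(x_0) \prod_{j=1}^n \widetilde u(t_j - t_{j-1},x_{j-1},x_j)\,\varphi_0^2(x_j)\,f_j(x_j)\,dx_0\cdots dx_n.
$$
Since $\cQ^x$ is a regular conditional probability given $\widetilde Y_0 = x$, this identity pins down the joint law of $(\widetilde Y_{t_1},\ldots,\widetilde Y_{t_n})$ under $\cQ^x$ (for $\varphi_0^2\,dx$-a.e.\ $x$) as having density $\prod_{j=1}^n \widetilde u(t_j - t_{j-1},x_{j-1},x_j)\,\varphi_0^2(x_j)$ with $x_0 = x$.

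Next I would exploit this product structure: for any partition $0 \le t_1 < \cdots < t_k = s < t$ the integrand factorises at $s$, and the identity $(\widetilde T_{t-s}f)(x_k) = \int_{\R^d} \widetilde u(t-s,x_k,y) f(y) \varphi_0^2(y)\,dy$ shows that $\ex_{\cQ^x}[f(\widetilde Y_t) \mid \widetilde Y_{t_1},\ldots,\widetilde Y_{t_k}] = (\widetilde T_{t-s} f)(\widetilde Y_s)$ for every cylinder conditioning at times in $[0,s]$. Extending from cylinder events to all of $\cG_s$ by the monotone class theorem yields the claimed Markov property. A parallel computation shows the consistency of the one-dimensional marginal with \eqref{eq:fdd4}, so $\widetilde Y_0 = x$ under $\cQ^x$.

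The only delicate point, and the one worth dwelling on, is the ``a.e.\ $x$'' nature of $\cQ^x$ as a regular conditional probability: the above identity only determines $\cQ^x$ outside a $\varphi_0^2\,dx$-null set. To promote the Markov identity to every $x \in \R^d$ I would pass to the continuous version of $x \mapsto \ex_{\cQ^x}[F]$ for cylinder functionals $F$, which is available because $u(t,\cdot,\cdot)$ is jointly continuous and strictly positive by Lemma \ref{lm:kernel}(3)--(4), $\varphi_0 \in C_b(\R^d)$ is strictly positive, and $\widetilde T_t$ preserves $C_b(\R^d)$. Beyond this technicality, the argument is essentially bookkeeping: the semigroup law of $\widetilde T_t$ (inherited from that of $T_t$ via the ground state transform \eqref{intrkernel}) is the only nontrivial ingredient, and no further probabilistic input is needed beyond what Steps 1--3 have already constructed.
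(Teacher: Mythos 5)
Your proposal is correct and takes essentially the same route as the paper: both read off the product form of the finite-dimensional distributions of $(\widetilde Y_t)_{t\ge 0}$ under $\cQ^x$ from \eqref{eq:fdd3}--\eqref{eq:fdd4} and observe that $\widetilde u_t(x,dy) = \widetilde u(t,x,y)\varphi_0^2(y)\,dy$ is a Markov transition kernel with the semigroup property inherited from $\widetilde T_t$, from which the Markov property follows. Your explicit attention to the ``a.e.~$x$'' issue --- that $\cQ^x$ as a regular conditional probability is only pinned down up to a $\varphi_0^2\,dx$-null set, remedied by passing to the continuous version of the finite-dimensional distributions furnished by the jointly continuous strictly positive kernel $u(t,\cdot,\cdot)$ and continuous strictly positive $\varphi_0$ --- is a worthwhile elaboration of a step the paper dispatches with ``Clearly''.
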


\begin{proof}
Let
$$
\widetilde{u}_t(x,A) = \widetilde{T}_t \1_A(x),
$$
for every $A \in \cB(\R^d)$, $x \in \R^d$ and $t\geq 0$. Clearly, $\widetilde{u}_t(x,A) =
\ex_{\cQ^x}[\1_A(\widetilde{Y}_t)]$ and, by \eqref{eq:fdd3} and \eqref{eq:fdd4}, the finite dimensional
distributions of $(\widetilde{Y}_t)_{t \geq 0}$ are given by
\begin{align}
\label{eq:fdd5}
\ex_{\cQ^x}\left[\prod_{j=0}^n \1_{A_j}(\widetilde{Y}_{t_j})\right] = \int \prod_{j=0}^n \1_{A_j}(x_j)
\prod_{j=0}^n \widetilde{u}_{t_j - t_{j-1}}(x_{j-1},dx_j), \quad t_0 = 0, \ \ x_0 = x.
\end{align}
By using the properties of the intrinsic fractional semigroup $(\widetilde{T}_t)_{t \leq 0}$ it can be
checked directly that $\widetilde{u}_t(x,A)$ is a probability transition kernel, thus
$(\widetilde{Y}_t)_{t \geq 0}$ is a Markov process with finite dimensional distributions given by
\eqref{eq:fdd5}.
\end{proof}

\medskip
\noindent
\emph{(Step 4)} We now extend $(\widetilde{Y}_t)_{t \geq 0}$ to a process on the whole real line $\R$. Consider
$\widehat{\Omega}= D_r(\R^{+}, \R^d) \times D_l(\R^{+}, \R^d)$ with an appropriate product $\sigma$-field 
$\widehat{\cF}$ and product measure $\widehat{\cQ^x}$, respectively. Let $\widehat{X}_t$ be the coordinate 
process given by
$$
\widehat X_t(\omega)=
\left\{
\begin{array}{ll}\omega_1(t), & t\geq0\\
\omega_2(-t),                 & t<0
\end{array} \right.
$$
for $\omega =(\omega_1, \omega_2) \in \widehat{\Omega}$. We thus defined a stochastic process
$(\widehat{X}_t)_{t \in \R}$ on the product space $(\widehat{\Omega}, \widehat{\cF}, \widehat{\cQ^x})$ such
that $\widehat{\cQ^x}(\widehat{X}_0 = x)=1$ and $\R \ni t \mapsto \widehat{X}_t(\omega)$ is right continuous
with left limits. It is easy to see that $\widehat{X}_t$, $t \geq 0$, and $\widehat{X}_s$, $s \leq 0$, are
independent, and $\widehat{X}_t \stackrel{\rm d}{=}\widehat{X}_{-t}$.

\medskip
\noindent
\emph{(Step 5)} We now prove Theorem \ref{th:exphi1}.
\begin{proof}[Proof of Theorem \ref{th:exphi1}]
Recall that $\Omega = D_{\rm r}(\R,\R^d)$. Denote the image measure of $\widehat{\cQ}^x$ on $(\Omega, \cF)$
with respect to $\widehat{X}$ by
$$
\mu^x = \widehat{\cQ}^x \circ \widehat{X}^{-1}.
$$
Let $\widetilde{X}_t(\omega) = \omega(t)$, $t \in \R$, $\omega \in \Omega$, denote the coordinate process.
Clearly, we have
$$
\widetilde{X}_t \stackrel{\rm d}{=} \widetilde{Y}_t, \;\; t \geq 0, \quad \mbox{and} \quad
\widetilde{X}_t \stackrel{\rm d}{=} \widetilde{Y}_{-t}, \; \; t \leq 0.
$$
Thus we see that $\widetilde{X}_t \stackrel{\rm d}{=} \widetilde{X}_{-t}$ and by Step 4, $(\widetilde{X}_t)_
{t \geq 0}$ and $(\widetilde{X}_t)_{t \leq 0}$ are independent. Furthermore, by Step 2, $(\widetilde{Y}_t)_
{t \geq 0}$ and $(\widetilde{Y}_{-t})_{t \leq 0}$ are Markov processes respectively under the natural
filtrations $\sigma(\widetilde{Y}_s, 0 \leq s \leq t)$ and $\sigma(\widetilde{Y}_s, 0 \leq s \leq -t)$. Thus
$(\widetilde{X}_t)_{t \geq 0}$ and $(\widetilde{X}_{t})_{t \leq 0}$ are also Markov processes with respect to
$(\cF^{+}_t)_{t \geq 0}$ and $(\cF^{-}_t)_{t \leq 0}$.

It remains to show assertion (4) of the theorem. Let $t_0 \leq t_1 \leq ... \leq t_n \leq 0 \leq t_{n+1} \leq ...
\leq t_{n+m}$ and $f_j \in L^{\infty}(\R^d)$ for $j=0,1,...,n+m$. By independence of $(\widetilde{X}_t)_{t \geq 0}$
and $(\widetilde{X}_{t})_{t \leq 0}$ we have
$$
\int_{\R^d} \ex_{\mu^x}\left[\prod_{j=0}^{n+m} f_j(\widetilde{X}_{t_j})\right] \varphi^2_0(x) dx =
\int_{\R^d} \ex_{\mu^x}\left[\prod_{j=0}^n f_j(\widetilde{X}_{t_j})\right] \ex_{\mu^x}\left[\prod_{j=n+1}^{n+m}
f_j(\widetilde{X}_{t_j})\right] \varphi^2_0(x) dx.
$$
Moreover,
$$
\ex_{\mu^x}\left[\prod_{j=n+1}^{n+m} f_j(\widetilde{X}_{t_j})\right] =
\left(\widetilde{T}_{t_{n+1}}\: f_{n+1}\widetilde{T}_{t_{n+2}-t_{n+1}}\: f_{n+2} ...\:
\widetilde{T}_{t_{n+m}-t_{n+m-1}}\: f_{n+m} \right)(x)
$$
and
$$
\ex_{\mu^x}\left[\prod_{j=0}^{n} f_j(\widetilde{X}_{t_j})\right] =
\ex_{\mu^x}\left[\prod_{j=0}^{n} f_j(\widetilde{X}_{-t_j})\right] =
\left(\widetilde{T}_{-t_n}\: f_n \widetilde{T}_{t_n-t_{n-1}}\: f_{n-1} ...\: \widetilde{T}_{t_1-t_0}
\: f_0 \right)(x).
$$
Hence
\begin{align*}
& \int_{\R^d} \ex_{\mu^x}\left[\prod_{j=0}^{n+m} f_j(\widetilde{X}_{t_j})\right] \varphi^2_0(x) dx \\
& =  \left(\widetilde{T}_{-t_n} f_n \widetilde{T}_{t_n-t_{n-1}} f_{n-1} ...\:
\widetilde{T}_{t_1-t_0} f_0, \widetilde{T}_{t_{n+1}} f_{n+1}\widetilde{T}_{t_{n+2}-t_{n+1}} f_{n+2} ...\:
\widetilde{T}_{t_{n+m}-t_{n+m-1}} f_{n+m} \right)_{L^2(\R^d, \varphi_0^2 dx)}\\
& = \left(f_0, \widetilde{T}_{t_1-t_0} f_1 ...\: \widetilde{T}_{t_{n+m}-t_{n+m-1}} f_{n+m} \right)_
{L^2(\R^d, \varphi_0^2 dx)}
\end{align*}and \eqref{eq:fddist} follows. Shift invariance is a simple consequence of the above equality.
\end{proof}

\begin{definition}[\textbf{Fractional $P(\phi)_1$-process}]
\rm{
We call the process $(\widetilde{X}_t, \mu^x)_{t \in \R}$ obtained in Theorem \ref{th:exphi1} the
\emph{fractional $P(\phi)_1$-process} for the Kato-decomposable potential $V$. We call the measure
$\mu$ on $(\Omega, \cF)$ with
$$
\mu(A)= \int_{\R^d} \ex_{\mu^x}\left[\1_A \right] \varphi^2_0(x) dx
$$
\emph{fractional $P(\phi)_1$-measure} for the Kato-decomposable potential $V$.
}
\end{definition}
\noindent
For our purposes below it will be useful to see $\mu$ as the measure with respect to the stable bridge.

\begin{lemma} We have for $A \in \cF_{[s,t]}$, $s, t \in \R$,
\begin{align}
\label{eq:useexp}
\mu(A) = \int_{\R^d}dx  \varphi_0(x) \int_{\R^d} dy  \varphi_0(y)
\int_{\Omega} e^{-\int_s^t (V(X_r(\omega))-\lambda_0) dr} \1_A d\nu_{[s,t]}^{x,y}(\omega).
\end{align}
\end{lemma}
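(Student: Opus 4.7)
The plan is to verify \eqref{eq:useexp} by a monotone class argument. Finite intersections of basic cylinder events $\{\omega(r) \in A\}$ for $r \in [s,t]$ form a $\pi$-system generating $\cF_{[s,t]}$, and both sides of \eqref{eq:useexp} define finite measures in $A$, so it suffices to prove the identity for cylinders $A = \bigcap_{i=0}^n \{\omega(s_i) \in A_i\}$ with $s < s_0 < s_1 < \cdots < s_n < t$ and Borel $A_i \subset \R^d$; the boundary cases $s_0 = s$ or $s_n = t$ follow by enlarging $[s,t]$ slightly, since the resulting expression will be seen to be independent of the enclosing interval.

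For the left-hand side, Theorem \ref{th:exphi1}(4) gives
\begin{equation*}
\mu(A) = \bigl(\1_{A_0},\, \widetilde{T}_{s_1-s_0} \1_{A_1} \cdots \widetilde{T}_{s_n-s_{n-1}} \1_{A_n} \bigr)_{L^2(\R^d,\varphi_0^2 dx)}.
\end{equation*}
Inserting the explicit kernel $\widetilde{u}(r,x,y) = e^{\lambda_0 r} u(r,x,y)/(\varphi_0(x)\varphi_0(y))$ from \eqref{intrkernel}--\eqref{condisemigroup} and telescoping the intermediate $\varphi_0^2(x_i)$ factors (for $1 \le i \le n-1$) against the two adjacent $\varphi_0(x_i)^{-1}$ factors coming from the neighbouring kernels reduces this to
\begin{equation*}
\mu(A) = e^{\lambda_0(s_n-s_0)} \int \prod_{i=0}^n \1_{A_i}(x_i)\, \varphi_0(x_0)\varphi_0(x_n) \prod_{i=1}^n u(s_i-s_{i-1}, x_{i-1}, x_i)\, dx_0\cdots dx_n.
\end{equation*}

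For the right-hand side the key step is a Chapman--Kolmogorov decomposition of the bridge integral, namely
\begin{equation*}
\int_\Omega e^{-\int_s^t V(X_r)dr} \prod_i \1_{A_i}(\omega(s_i))\, d\nu^{x,y}_{[s,t]}(\omega) = \int \prod_i \1_{A_i}(x_i)\, u(s_0-s, x, x_0) \prod_{i=1}^n u(s_i-s_{i-1}, x_{i-1}, x_i)\, u(t-s_n, x_n, y)\, dx_0\cdots dx_n.
\end{equation*}
To derive this I would apply \eqref{eq:cnd} to the functional $Y = e^{-\int_s^t V(X_r)dr}\prod_i \1_{A_i}(X_{s_i})$ together with an arbitrary bounded test function $g(X_t)$, and then iterate the ordinary Markov property of $(X_r,\pr^{x,s})_{r\geq s}$ and Lemma \ref{lm:kernel}(5) across each subinterval determined by $s < s_0 < \cdots < s_n < t$, using the multiplicativity $e^{-\int_s^t V(X_r)dr} = \prod_{i=0}^n e^{-\int_{s_{i-1}}^{s_i} V(X_r)dr}$ with $s_{-1}=s$ and $s_{n+1}=t$; since $g$ is arbitrary, the desired identity follows. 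Substituting this into the right-hand side of \eqref{eq:useexp} and integrating $x$ and $y$ out by means of the eigenfunction relation $T_r\varphi_0 = e^{-\lambda_0 r}\varphi_0$ produces the boundary factors $e^{-\lambda_0(s_0-s)}\varphi_0(x_0)$ and $e^{-\lambda_0(t-s_n)}\varphi_0(x_n)$, which combine with the prefactor $e^{\lambda_0(t-s)}$ coming from $e^{-\int_s^t(V-\lambda_0)dr}$ to give precisely $e^{\lambda_0(s_n-s_0)}$, matching the formula for $\mu(A)$ above.

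The only non-routine step is the Chapman--Kolmogorov decomposition of the bridge integral; once that is in place, the rest is algebraic bookkeeping with the $\varphi_0$ and exponential factors. An equivalent and perhaps more transparent route would be to read off from \eqref{eq:xmeas1} and Lemma \ref{lm:kernel}(5) that the joint distribution of $(\omega(s_0),\dots,\omega(s_n))$ under the weighted bridge measure $e^{-\int_s^t V(X_r)dr}d\nu^{x,y}_{[s,t]}$ has density $u(s_0-s,x,x_0)\prod_{i=1}^n u(s_i-s_{i-1},x_{i-1},x_i)\, u(t-s_n,x_n,y)$, from which the claim drops out immediately.
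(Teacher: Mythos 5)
Your proof is correct and takes essentially the same route as the paper: the paper's own argument is a sketch reducing to cylinder sets and invoking \eqref{eq:fddist}, the Markov property, the time-translation identity, and \eqref{eq:cnd}--\eqref{eq:xmeas}, which are exactly the ingredients you spell out in detail. The Chapman--Kolmogorov decomposition of the weighted bridge integral, the telescoping of the intrinsic kernel factors, and the bookkeeping of the $e^{\lambda_0(\cdot)}$ exponents all check out and match the quantity $e^{\lambda_0(s_n-s_0)}\int\prod\1_{A_i}(x_i)\varphi_0(x_0)\varphi_0(x_n)\prod u(s_i-s_{i-1},x_{i-1},x_i)\,dx_0\cdots dx_n$ on both sides.
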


\begin{proof}
It is enough to check that the equality \eqref{eq:useexp} holds for cylinder sets of the form
$A = \left\{\omega(t_0) \in B_0, ..., \omega(t_n) \in B_n \right\}$, where $s \leq t_0 < t_1
< ... < t_n < t$ and $B_1, B_2, ..., B_n$ are Borel sets. This can be seen directly by
\eqref{eq:fddist}, the Markov property of the symmetric stable process $(X_t)_{t \geq 0}$, the
fact that $(X_t, \pr^{s,x}) \stackrel{\rm d}{=} (X_{t-s},\pr^x)$, and the equalities \eqref{eq:cnd},
\eqref{eq:xmeas}.
\end{proof}

\subsection{Properties of fractional $P(\phi)_1$-processes}
\noindent
In this subsection we show that the behaviour of Kato-decomposable potentials $V$ at infinity
(in particular, AIUC semigroups) has a direct influence on the properties of $P(\phi)_1$-processes.
A consequence of the construction in the previous subsection is that a $P(\phi)_1$-process is
a stationary Markov process with stationary distribution $\rho(A)=\int_A \varphi_0^2(y) dy$,
i.e., $\mu(\widetilde X_t \in A) = \rho(A)$ for every $t \in \R$ and Borel set $A$.

\begin{theorem}
Let $V$ be a Kato-decomposable potential, and consider the following properties:
\begin{itemize}
\item[(1)]
The semigroup $(T_t)_{t \geq 0}$ is AIUC.
\item[(2)]
There exists $t_0 > 0$ such that for every $t \geq t_0$ we have
$$
\sup_{x \in \R^d} \ex_{\mu^x}\left[\varphi_0^{-1}(\widetilde X_t)\right] < \infty.
$$
\item[(3)]
For every Borel set $A \in \R^d$
$$
\lim_{t \to \infty} \mu^x (\widetilde X_t \in A) = \rho(A)
$$
holds, uniformly in $x \in \R^d$.
\end{itemize}
Then we have $(1) \Longleftrightarrow (2) \Longrightarrow (3)$.
\end{theorem}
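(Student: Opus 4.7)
The strategy is to reduce everything to a pointwise comparison between $T_t\1$ and $\varphi_0$. First I would rewrite the expectation in (2) using the definition of the intrinsic semigroup (\ref{condisemigroup}) and the formula (\ref{intrkernel}) for its kernel. Taking $f = 1/\varphi_0$ and carrying out a direct calculation yields
\begin{equation}
\label{eq:charid}
\ex_{\mu^x}\left[\varphi_0^{-1}(\widetilde X_t)\right] = \int_{\R^d} \widetilde u(t,x,y)\varphi_0(y)\,dy = \frac{e^{\lambda_0 t}}{\varphi_0(x)}\,T_t\1(x),
\end{equation}
so (2) is equivalent to the pointwise bound $T_t\1(x) \leq C_{V,t}\, e^{-\lambda_0 t} \varphi_0(x)$ holding uniformly in $x \in \R^d$ for each $t \geq t_0$.

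For $(1) \Rightarrow (2)$ I would use AIUC directly in the middle expression of (\ref{eq:charid}): if $\widetilde u(t,x,y) \leq C_{V,t}$ for $t \geq t_0$, then by the remark following Lemma \ref{lm:iucons} we have $\varphi_0 \in L^1(\R^d)$, so the integral is bounded by $C_{V,t}\|\varphi_0\|_1$ uniformly in $x$. The converse $(2) \Rightarrow (1)$ requires upgrading the single-variable estimate of (\ref{eq:charid}) to a pointwise bound on the two-point kernel $u(t,x,y)$. I would exploit the semigroup property together with the boundedness of each integral kernel $u(t,\cdot,\cdot)$ provided by Lemma \ref{lm:kernel}, which supplies $M_t := \sup_{z,w} u(t,z,w) < \infty$. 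For $s \geq 3t_0$, decomposing $u(s,x,y)$ as an integral over three equal time steps of length $s/3$ and using symmetry of $u$ gives
\begin{equation*}
u(s,x,y) \leq M_{s/3}\, T_{s/3}\1(x)\, T_{s/3}\1(y) \leq M_{s/3}\, C^2_{V,s/3}\, e^{-2\lambda_0 s/3}\, \varphi_0(x)\varphi_0(y),
\end{equation*}
hence $\widetilde u(s,x,y) \leq M_{s/3}\, C^2_{V,s/3}\, e^{\lambda_0 s/3}$ uniformly in $(x,y)$, which is AIUC with threshold $3t_0$.

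Finally, for the implication to (3) I would invoke Lemma \ref{lm:iucons}, which upgrades AIUC to the uniform convergence $\widetilde u(t,x,y) \to 1$ as $t \to \infty$. Since $\mu^x(\widetilde X_t \in A) = \widetilde T_t \1_A(x) = \int_A \widetilde u(t,x,y)\varphi_0^2(y)\,dy$ and $\rho(A) = \int_A \varphi_0^2(y)\,dy$, the estimate
\begin{equation*}
\left|\mu^x(\widetilde X_t \in A) - \rho(A)\right| \leq \sup_{y \in \R^d}|\widetilde u(t,x,y)-1| \cdot \rho(A) \longrightarrow 0 \quad \text{as } t \to \infty
\end{equation*}
is uniform in $x$ (and even in $A$, since $\rho(A) \leq 1$). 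The main obstacle I anticipate is the bootstrap in $(2) \Rightarrow (1)$, since one must upgrade a pointwise control of the single-variable function $T_t\1$ to a pointwise control of the two-point kernel $u(t,x,y)$; the three-step decomposition handles this cleanly by absorbing the boundedness constant $M_{s/3}$ in the middle factor, at the mild cost of enlarging the threshold from $t_0$ to $3t_0$.
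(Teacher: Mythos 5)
Your proof is correct and follows essentially the same route as the paper: the key identity $\ex_{\mu^x}[\varphi_0^{-1}(\widetilde X_t)] = e^{\lambda_0 t}T_t\1(x)/\varphi_0(x)$, showing that (2) is precisely the pointwise bound $T_t\1 \le C_{V,t}e^{-\lambda_0 t}\varphi_0$, the three-way semigroup splitting to bootstrap that one-sided bound up to AIUC, and Lemma \ref{lm:iucons} for the convergence in (3). The paper compresses the equivalence $(1)\Leftrightarrow(2)$ into a single assertion that AIUC is equivalent to $T_t\1 \le C_{V,t}\varphi_0$, whereas you spell out both directions (using $\varphi_0 \in L^1$ from the remark preceding, not following, Lemma \ref{lm:iucons}, and the boundedness of $u(t,\cdot,\cdot)$ from Lemma \ref{lm:kernel}); that is a welcome filling-in of detail rather than a different argument.
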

\begin{proof}
The implication $(1) \Longrightarrow (3)$ is a direct consequence of Lemma \ref{lm:iucons}. To
prove equivalence of (1) and (2) it suffices to see that AIUC is equivalent to the property
that there exists $t_0 > 0$ such that for every $t \geq t_0$ there exists a constant $C_{V,t}$
such that for every $x \in \R^d$ we have $T_t \1 (x) \leq C_{V,t} \varphi_0(x)$. However, this
is trivially equivalent to (2).
\end{proof}

The asymptotic behaviour of the ground state allows to estimate the actual support of $\mu$.

\begin{theorem}[\textbf{Typical path behaviour}]
\label{th:subspaceom}
Let $V$ be Kato-decomposable and $\varphi_0 \in L^2(\R^d) \cap L^1(\R^d)$. Also, let $(a_n)_{n \geq 1}$
be a sequence of positive real numbers such that $\sum_{n=1}^{\infty} a_n < \infty$. Then
\begin{align}
\label{eq:subspaceom}
\lim_{|N| \rightarrow \infty}
\frac{a_{|N|}}{\varphi_0(\omega(N))} & = 0, \quad \mbox{$\mu$-a.s.}
\end{align}
\end{theorem}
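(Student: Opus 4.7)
The plan is to deduce \eqref{eq:subspaceom} from the Borel--Cantelli lemma applied to the events
$$
A_N(\varepsilon) := \bigl\{\omega \in \Omega : a_{|N|}/\varphi_0(\omega(N)) > \varepsilon\bigr\}, \qquad N \in \Z \setminus \{0\}, \ \varepsilon > 0,
$$
using the stationarity of the fractional $P(\phi)_1$-measure $\mu$ and the assumption $\varphi_0 \in L^1(\R^d)$.

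First I would invoke shift invariance from Theorem \ref{th:exphi1} (4): the one-dimensional marginal of $\widetilde X_t = \omega(t)$ under $\mu$ equals the stationary distribution $\varphi_0^2(y)\,dy$ for every $t \in \R$, in particular for every integer $N$. Hence
$$
\mu\bigl(A_N(\varepsilon)\bigr) \,=\, \mu\bigl(\varphi_0(\omega(N)) < a_{|N|}/\varepsilon\bigr) \,=\, \int_{\{\varphi_0 < a_{|N|}/\varepsilon\}} \varphi_0^2(y)\,dy.
$$

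Next, the elementary pointwise bound $\varphi_0^2 \le (a_{|N|}/\varepsilon)\,\varphi_0$ on the set $\{\varphi_0 < a_{|N|}/\varepsilon\}$ gives
$$
\mu\bigl(A_N(\varepsilon)\bigr) \,\le\, \frac{a_{|N|}}{\varepsilon}\,\|\varphi_0\|_{L^1(\R^d)}.
$$
Summing over $N \in \Z \setminus \{0\}$ and using $\sum_{n\geq 1} a_n < \infty$ yields
$$
\sum_{N \in \Z \setminus\{0\}} \mu\bigl(A_N(\varepsilon)\bigr) \,\le\, \frac{2\,\|\varphi_0\|_{L^1}}{\varepsilon} \sum_{n \geq 1} a_n \,<\, \infty.
$$

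By the Borel--Cantelli lemma, for $\mu$-almost every $\omega$ the event $A_N(\varepsilon)$ occurs only for finitely many $N$, which means $\limsup_{|N| \to \infty} a_{|N|}/\varphi_0(\omega(N)) \leq \varepsilon$. Applying this to $\varepsilon = 1/k$ for $k = 1, 2, \dots$ and intersecting the corresponding countable family of full-$\mu$-measure sets gives \eqref{eq:subspaceom}. The argument is quite short once the right viewpoint is taken; the only substantive input beyond the definition of $\mu$ is that $\varphi_0 \in L^1$ (not merely $L^2$), which is precisely what ensures $\int_{\{\varphi_0 < c\}} \varphi_0^2 \to 0$ with an explicit linear rate in $c$ and thus makes the Borel--Cantelli summability compatible with any summable weight sequence $(a_n)$.
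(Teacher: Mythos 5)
Your proof is correct and follows essentially the same approach as the paper: stationarity of $\mu$ reduces the marginal to $\varphi_0^2\,dy$, the pointwise bound $\varphi_0^2 \le c\,\varphi_0$ on $\{\varphi_0 < c\}$ together with $\varphi_0 \in L^1$ gives the linear-in-$a_{|N|}$ estimate, and Borel--Cantelli finishes. The only cosmetic difference is that the paper first invokes time reversibility to reduce to $N>0$ rather than summing over all $N \in \Z\setminus\{0\}$.
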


\begin{proof}
By time reversibility of $\mu$ it suffices to show that for every $\varepsilon > 0$
\begin{align}
\label{eq:BCL}
\mu\left(\limsup_{N \rightarrow \infty} \frac{a_N}{\varphi_0(\omega(N))} > \varepsilon \right)  = 0.
\end{align}
The fact that $\varphi_0 \in L^1(\R^d)$ and stationarity of $\mu$ give
\begin{align*}
\mu\left(\frac{a_N}{\varphi_0(\omega(N))} > \varepsilon \right) =
\mu\left(\frac{a_N}{\varepsilon} > \varphi_0(\omega(0)) \right)
= \int_{\R^d} \1_{\left\{\varphi_0< a_N/\varepsilon \right\}}(x) \varphi_0^2(x)dx
\leq \frac{a_N}{\varepsilon} \left\|\varphi_0\right\|_1 .
\end{align*}
Since the right hand side of the above inequality is summable with respect to $N$ for every $\varepsilon >0$,
the Borel-Cantelli Lemma gives \eqref{eq:BCL} for every $\varepsilon > 0$, and \eqref{eq:subspaceom} follows.
\end{proof}

\begin{corollary}
Under the assumptions of the above theorem, by taking $a_n =  n^{-1-\theta}$, $\theta>0$, we obtain that the
measure $\mu$ is supported by a subset of paths such that for every $\theta>0$
\begin{align}
\label{eq:theta}
\lim_{|N| \rightarrow \infty} \frac{1}{|N|^{1+\theta}\varphi_0(\omega(N))} = 0.
\end{align}
\end{corollary}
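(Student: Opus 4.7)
The plan is to deduce the corollary as a direct application of Theorem \ref{th:subspaceom}, together with a short quantifier-swap that upgrades the ``for each fixed $\theta>0$'' statement into the stronger ``for every $\theta>0$ simultaneously'' statement claimed here.

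First, I would observe that for every $\theta>0$ the sequence $a_n=n^{-1-\theta}$ is summable, so the hypotheses of Theorem \ref{th:subspaceom} are satisfied. For each fixed $\theta>0$, define
\begin{equation*}
\Omega_\theta := \left\{\omega\in\Omega : \lim_{|N|\to\infty} \frac{1}{|N|^{1+\theta}\varphi_0(\omega(N))}=0\right\}.
\end{equation*}
Theorem \ref{th:subspaceom}, applied with $a_n=n^{-1-\theta}$, gives $\mu(\Omega_\theta)=1$.

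Next, I would fix a specific countable sequence $\theta_k\downarrow 0$ (for instance $\theta_k=1/k$) and set $\Omega_0:=\bigcap_{k\geq 1}\Omega_{\theta_k}$. Since this is a countable intersection of sets of full $\mu$-measure, $\mu(\Omega_0)=1$. The final step is to show that on $\Omega_0$ the limit in \eqref{eq:theta} holds for \emph{every} $\theta>0$, not merely for the countable family $(\theta_k)$. For this, given any $\theta>0$, pick $k$ with $\theta_k<\theta$; then for $|N|\geq 1$, using that $\varphi_0>0$ (the semigroup is positivity improving),
\begin{equation*}
0 \le \frac{1}{|N|^{1+\theta}\varphi_0(\omega(N))}
\le \frac{1}{|N|^{1+\theta_k}\varphi_0(\omega(N))},
\end{equation*}
and the right-hand side tends to $0$ as $|N|\to\infty$ by $\omega\in\Omega_{\theta_k}$. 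Hence \eqref{eq:theta} holds on $\Omega_0$ for all $\theta>0$.

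There is no real obstacle here beyond the quantifier manipulation: the single genuine input is Theorem \ref{th:subspaceom}, the summability of $\sum n^{-1-\theta}$, and the monotonicity of $|N|^{1+\theta}$ in $\theta$ which propagates the property from a countable dense set of $\theta$-values to all of $(0,\infty)$.
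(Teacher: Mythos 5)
Your proof is correct and is essentially the intended argument: the paper leaves this corollary unproved because it is a direct application of Theorem \ref{th:subspaceom}, and you correctly identify the one subtlety — upgrading the ``for each fixed $\theta$, $\mu$-a.s.'' conclusion to a single full-measure set on which the limit holds for all $\theta>0$ simultaneously, via a countable intersection over $\theta_k\downarrow 0$ and monotonicity of $|N|^{1+\theta}$ in $\theta$ for $|N|\ge 1$.
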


By using Theorem \ref{th:eig1}, a more explicit
description of the support for a wide class of potentials can be given.

\begin{corollary}
\label{cor:subspace1}
Let $V$ be Kato-decomposable such that $V(x) \to \infty$ as $|x| \to \infty$. Assume that there exists
a compact set $K \in \R^d$, possibly empty, such that
\begin{itemize}
\item[(1)]
$K^c \subset \left\{x \in \Rd: \overline{B}(x,1) \cap \supp(V_{-}) = \emptyset \right\}$,
\item[(2)]
there is a constant $M_{V,K}\geq1$ such that for every $x \in K^c$
\begin{align*}
V_{+}(y) & \le M_{V,K} V_{+}(z), \quad y, z \in B(x,1).
\end{align*}
\end{itemize}
Then for every $\theta>0$ we have
\begin{align*}
\lim_{|N| \rightarrow \infty} \frac{V_{+}(\omega(N))|\omega_N|^{d+\alpha}
\1_{K^c}(\omega(N))}{|N|^{1+\theta}} = 0,  \quad \mbox{$\mu$-a.s.}
\end{align*}
\end{corollary}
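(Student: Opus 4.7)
The plan is to reduce this statement to the preceding corollary (which gives $\lim_{|N| \to \infty} |N|^{-1-\theta} \varphi_0(\omega(N))^{-1} = 0$, $\mu$-a.s.) by inserting the upper bound on $\varphi_0$ from Corollary \ref{th:eig1}.

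First, I would argue that after possibly enlarging $K$ we may assume the set $A := K^c$ satisfies the hypothesis $A \subset \{x : V(y) \geq 1 \text{ for } y \in B(x,1)\}$ of Corollary \ref{th:eig1}. Indeed, the comparability condition (2) applied symmetrically gives
$$
\inf_{y \in B(x,1)} V_{+}(y) \ge M_{V,K}^{-1}\, V_{+}(x), \quad x \in K^c,
$$
and because $V = V_{+}$ on $K^c$ and $V(x) \to \infty$, the set $\{x \in K^c : V_{+}(x) < M_{V,K}\}$ is bounded. Hence $K' := K \cup \{x : V_{+}(x) < M_{V,K}\}$ is compact, still satisfies (1) and (2), and additionally $V(y) \geq 1$ on $B(x,1)$ for every $x \in (K')^c$. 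Replacing $K$ by $K'$, Corollary \ref{th:eig1} yields a constant $C_{V,K}^{(2)}$ such that
$$
\varphi_0(x) \le \frac{C_{V,K}^{(2)}}{V(x)(1+|x|)^{d+\alpha}}, \quad x \in K^c,
$$
and since $V = V_{+}$ on $K^c$, this is equivalent to $V_{+}(x)(1+|x|)^{d+\alpha} \le C_{V,K}^{(2)}/\varphi_0(x)$ for $x \in K^c$.

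Then I would substitute $x = \omega(N)$ and use $|\omega(N)|^{d+\alpha} \le (1+|\omega(N)|)^{d+\alpha}$ to obtain the pointwise pathwise inequality
$$
\frac{V_{+}(\omega(N)) |\omega(N)|^{d+\alpha} \1_{K^c}(\omega(N))}{|N|^{1+\theta}}
\le \frac{C_{V,K}^{(2)}}{|N|^{1+\theta}\, \varphi_0(\omega(N))},
$$
valid for every $\omega \in \Omega$ and $N \in \Z \setminus \{0\}$. Applying the preceding corollary with $a_n = n^{-1-\theta}$ (which is summable since $\theta > 0$), the right-hand side tends to zero as $|N| \to \infty$ for $\mu$-a.e.\ $\omega$, and the conclusion follows.

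The proof is essentially a bookkeeping combination of two prior results, so I do not expect a substantive obstacle. The only mildly technical point is verifying that $K$ can be enlarged to a compact set on which the full comparability/size hypothesis required by Corollary \ref{th:eig1} holds; but this is immediate from $V(x) \to \infty$ together with the comparability on unit balls.
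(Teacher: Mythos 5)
Your proposal is correct and follows essentially the same route as the paper's proof: apply the two-sided ground-state estimate of Corollary \ref{th:eig1} on $K^c$ to replace $\varphi_0(\omega(N))^{-1}$ by $V_{+}(\omega(N))|\omega(N)|^{d+\alpha}$, and then invoke Theorem \ref{th:subspaceom} with $a_n = n^{-1-\theta}$. You are in fact more careful than the paper in one respect: Corollary \ref{th:eig1} requires $A \subset \{x : V(y) \geq 1 \text{ for } y \in B(x,1)\}$, which is not literally part of hypotheses (1)--(2) here, and your enlargement $K \mapsto K'$ (using $V \to \infty$ plus comparability to ensure $V \geq 1$ on the relevant unit balls) supplies this cleanly; the paper glosses over this point.

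One small imprecision: after replacing $K$ by $K'$, Corollary \ref{th:eig1} gives the upper bound on $\varphi_0$ only for $x \in (K')^c$, not on all of $K^c$ as you write. To get the pointwise inequality with the indicator $\1_{K^c}$ you still need to cover the annulus $K' \setminus K$; this is immediate because that set is bounded, $\varphi_0$ is continuous and strictly positive so bounded below there, and $V_{+}(1+|\cdot|)^{d+\alpha}$ is bounded above there by the comparability condition (2) and the definition of $K'$, so the inequality extends to all of $K^c$ at the cost of enlarging the constant. (Also, $K \cup \{V_{+} < M_{V,K}\}$ need not itself be closed, so take, say, the closed ball $\overline{B}(0,R)$ containing $K \cup \{V_{+} < M_{V,K}\}$ as your $K'$.) With these routine patches your argument is complete and matches the paper's.
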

\begin{proof}
By Corollary \ref{th:eig1} we have that $\varphi_0(x)$ and $(V_{+}(x)|x|^{d+\alpha})^{-1}$ are
comparable on $K^c$. Since $0<C_1 \leq \varphi_0 \leq C_2 < \infty$ on $K$, the assertion
follows from the previous theorem.
\end{proof}
\noindent
Some examples illustrating the above typical path behaviour results are discussed below.

\subsection{Existence of Gibbs measures}
\noindent
In this section we show that the measure of a $P(\phi)_1$-process for a potential $V$ is a Gibbs
measure for the same potential.

Without restricting generality we consider symmetric intervals $I = [-T,T]$, $T>0$. We will use
the notations $\cF_T := \cF_{[-T,T]}$, $\cT_T := \cF_{(-\infty,-T] \cup [T, \infty)}$, $\nu^{x,y}_T
= \nu^{x,y}_{[-T,T]}$. Let $\bar\omega \in \Omega$, and consider the point measure $\delta^{\bar{\omega}}_T$
on $\Omega_{[-T,T]^c}$ concentrated on $\bar{\omega} \in \Omega$. For every $T>0$ we define a measure
on $(\Omega,\cF)$ by
\begin{align}
\label{eq:refmeas}
\nu^{\bar{\omega}}_T := \nu^{\bar{\omega}(-T),\bar{\omega}(T)}_T \otimes \delta^{\bar{\omega}}_T
\, .
\end{align}
In what follows we consider the family of measures $(\nu^{\bar{\omega}}_T)_{T>0}$ as reference measure.

Let $V$ be a Kato-decomposable potential and define
\begin{align}
\label{eq:integr}
Z_T(x,y) := \int_\Omega e^{-\int_{-T}^T V(X_s(\omega)) ds}d\nu^{x,y}_T(\omega)
\, ,
\end{align}
for all $T>0$ and all $x,y \in \R^d$. By Lemma \ref{lm:kernel} (5) we have
$$
Z_T(x,y) = u(2T,x,y) < \infty, \quad  x, y \in \R^d, \;T>0.
$$
For every $T>0$ define the conditional probability kernel
\begin{align}
\label{def:probkern}
\mu_T(A,\bar{\omega}) & = \frac{1}{Z_T(\bar{\omega}(-T),\bar{\omega}(T))} \int_\Omega \1_A(\omega)
e^{-\int_{-T}^T V(X_s(\omega)) ds}d\nu^{\bar{\omega}}_T(\omega), \quad A \in \cF, \; \bar{\omega}
\in \Omega.
\end{align}
We refer to $\bar\omega$ as a boundary path configuration.

\begin{definition}[\textbf{Gibbs measure}]
\rm{
A probability measure $\mu$ on $(\Omega,\cF)$ is called a \emph{Gibbs measure} for the fractional
$P(\phi)_1$-process $(\widetilde X_t)_{t \in \R}$ with potential $V$ if for every $A \in \cF$ and
every $T >0$ the function $\bar{\omega} \mapsto \mu_T(A,\bar{\omega})$ is a version of the conditional
probability $\mu(A | \cT_T)$, i.e.,
\begin{equation}
\label{DLR}
\mu(A | \cT_T)(\bar\omega) = \mu_T(A,\bar{\omega}), \quad A \in \cF, \; T > 0, \; \mbox{a.e.} \;
\bar\omega \in \Omega.
\end{equation}
\label{def:GibbsMeasure}
}
\end{definition}
\noindent
Condition (\ref{DLR}) is traditionally called \emph{Dobrushin-Lanford-Ruelle (DLR) equations}.

\begin{theorem}
\label{exist}
Let $\mu$ be the $P(\phi)_1$-measure for the Kato decomposable-potential $V$. For every $T>0$, $\bar{\omega}
\in \Omega$ and $A \in \cF$, $\bar{\omega} \mapsto \mu_T(A,\bar{\omega})$ is a version of the conditional
probability $\mu(A | \cT_T)(\bar{\omega})$, hence $\mu$ is a Gibbs measure for $V$.
\end{theorem}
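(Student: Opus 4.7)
The plan is to verify the Dobrushin--Lanford--Ruelle equations \eqref{DLR} on a $\cap$-stable generator of $\cF$ and extend by a monotone class argument, using the bridge representation \eqref{eq:useexp} of $\mu$ together with the Markov decomposition of the $\alpha$-stable bridge. Concretely, I would first reduce to showing that $\mu(A \cap C) = \int_C \mu_T(A,\bar\omega)\,d\mu(\bar\omega)$ for cylinder sets of the form $A \in \cF_T$ (depending on finitely many times inside $[-T,T]$) and $C \in \cT_T$ (depending on finitely many times in $[-S,-T] \cup [T,S]$ for some $S>T$). The case of a cylinder $A' = A \cap B$ with $A \in \cF_T$ and $B \in \cT_T$ reduces to this, because the factor $\delta_T^{\bar\omega}$ in \eqref{eq:refmeas} fixes the outside configuration to $\bar\omega$, yielding $\mu_T(A \cap B, \bar\omega) = \1_B(\bar\omega)\,\mu_T(A,\bar\omega)$.

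Next, I would apply \eqref{eq:useexp} on the interval $[-S,S]$ to both sides. The crucial ingredient is the Markov factorisation of the non-normalised bridge: by the finite-dimensional characterisation \eqref{eq:xmeas1}, inserting the times $-T$ and $T$ gives, after path concatenation,
$$
\nu^{x,y}_{[-S,S]}(\,\cdot\,) = \int_{\R^d}\int_{\R^d} \bigl(\nu^{x,u}_{[-S,-T]} \otimes \nu^{u,v}_{[-T,T]} \otimes \nu^{v,y}_{[T,S]}\bigr)(\,\cdot\,)\, du\, dv.
$$
Combined with the additive splitting $\int_{-S}^S V\,ds = \int_{-S}^{-T} V\,ds + \int_{-T}^{T} V\,ds + \int_T^S V\,ds$, this disintegrates $\mu(A \cap C)$ into an integral over $(u,v)$ whose middle factor is
$$
\int \1_A \, e^{-\int_{-T}^T (V-\lambda_0)\,ds}\, d\nu^{u,v}_{[-T,T]} = e^{2T\lambda_0}\, Z_T(u,v)\, \mu_T(A, \bar\omega)
$$
for any $\bar\omega$ with $\bar\omega(-T)=u$, $\bar\omega(T)=v$, using \eqref{eq:integr} and \eqref{def:probkern}.

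Performing the same disintegration on the other side $\int_C \mu_T(A,\bar\omega)\,d\mu(\bar\omega)$ by means of \eqref{eq:useexp}, the normaliser $Z_T(u,v)$ appearing in $\mu_T$ cancels precisely with the $Z_T(u,v)$ produced by the middle bridge integral, and one reads off that both sides equal the same expression
$$
\int du\, dv\; e^{2T\lambda_0}\, F_A(u,v) \int\!\!\int \varphi_0(x)\varphi_0(y)\,dx\,dy \int \1_C\, e^{-\int_{-S}^{-T}(V-\lambda_0) - \int_T^S (V-\lambda_0)}\, d\bigl(\nu^{x,u}_{[-S,-T]} \otimes \nu^{v,y}_{[T,S]}\bigr),
$$
where $F_A(u,v) := \int \1_A\, e^{-\int_{-T}^T V\,ds}\, d\nu^{u,v}_{[-T,T]}$. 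Once the DLR identity is established on such product cylinders, an application of the monotone class theorem extends it to all of $\cF$, proving that $\mu_T(A, \bar\omega)$ is a regular version of $\mu(A\,|\,\cT_T)(\bar\omega)$.

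The main technical obstacle is to justify the bridge-disintegration formula as an identity of $\sigma$-finite measures (beyond the obvious equality of finite-dimensional marginals) in a form strong enough to be integrated against the bounded measurable functions $e^{-\int_{-S}^{S}(V-\lambda_0)\,ds}\,\1_{A\cap C}$ and $e^{-\int_{-S}^{S}(V-\lambda_0)\,ds}\,\1_{C}\,\mu_T(A,\cdot)/Z_T$, and to ensure all applications of Fubini are valid; here the Kato-decomposability of $V$ together with the uniform integrability \eqref{eq:FKFest} of the Feynman--Kac functional and the boundedness of $\mu_T(A,\cdot)$ by $1$ provide the necessary domination.
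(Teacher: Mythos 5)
Your proposal follows essentially the same route as the paper: reduce to a generating $\pi$-system of product cylinders $A\cap B$ with $A$ supported inside $[-T,T]$ and $B$ outside, use the observation that the Dirac factor gives $\mu_T(A\cap B,\bar\omega)=\1_B(\bar\omega)\mu_T(A,\bar\omega)$, split the $\alpha$-stable bridge by the Markov property at the times $\pm T$, cancel the normaliser $Z_T$, and extend by a monotone class argument. The paper packages the $Z_T$-cancellation slightly differently (first showing $\int e^{-\int V}\mu_S(A,\cdot)\,d\nu_S^{\xi,\eta}=\int e^{-\int V}\1_A\,d\nu_S^{\xi,\eta}$ and then reassembling via the Markov property), but the underlying computation and all key ideas coincide.
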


\begin{proof}
Let $0<S<T$, $A \in \cF_S$, $B_1 \in \cF_{[-T,-S]}$, $B_2 \in \cF_{[S,T]}$, $B = B_1 \cap B_2 \in
\cF_{[-T,-S] \cup [S,T]}$. By a monotone class argument, it suffices to consider sets of
the form $A \cap B$. In order to show $\mu(\mu_S(A \cap B, \cdot)) = \mu(A \cap B)$ first note that since
$\nu_T^{\xi,\eta}(\{\bar\omega(-T) \neq \xi\}) = \nu_T^{\xi,\eta}(\{\bar\omega(T) \neq \eta\}) = 0$, we have
$$
\int_\Omega e^{-\int_{-S}^S V(X_s(\bar\omega)) \, ds} \mu_S(A,\bar\omega) \, d\nu_S^{\xi,\eta}(\bar\omega) =
\int_\Omega e^{-\int_{-S}^S V(X_s(\bar\omega)) \, ds} \1_A(\bar\omega) \, d\nu_S^{\xi,\eta}(\bar\omega).
$$
Then the Markov property of $(X_t)_{t \in \R}$ yields
\begin{eqnarray*}
\lefteqn{ \int_\Omega e^{-\int_{-T}^T V(X_s(\bar\omega)) \, ds} \mu_S(A \cap B,\bar\omega) \,
d\nu_T^{x,y}(\bar\omega) } \\
 & = &
 \int_{\R^d\times \R^d}
 \left( \int_\Omega e^{-\int_{-T}^{-S} V(X_s(\bar\omega)) \, ds} \1_{B_1}(\bar\omega) \, d\nu_{[-T,-S]}^{x,\xi}(\bar\omega)
        \right) \left( \int_\Omega e^{-\int_{-S}^S V(X_s(\bar\omega)) \, ds} \mu_S(A,\bar\omega) \, d\nu_S^{\xi,\eta}(\bar\omega)
        \right)\\
 &&  \times \left( \int_\Omega e^{-\int_S^T V(X_s(\bar\omega)) \, ds} \1_{B_2}(\bar\omega) \, d\nu_{[S,T]}^{\eta,y}(\bar\omega)
     \right) \, d\xi d\eta \\
 & = &
 \int_\Omega e^{-\int_{-T}^T V(X_s(\bar\omega)) \, ds} \1_{A \cap B}(\bar\omega) \, d\nu_T^{x,y}(\bar\omega)
\end{eqnarray*}
for all $x,y \in \R^d$. By \eqref{eq:useexp}, we plainly obtain
\begin{equation}
\int_\Omega \mu_S (A \cap B, \bar{\omega}) \, d\mu(\bar{\omega}) = \mu(A \cap B).
\end{equation}
As $\bar{\omega} \mapsto \mu_S(C,\bar{\omega})$ is $\cT_S$-measurable, the proposition is proven.
\end{proof}

\subsection{Uniqueness and support properties}

It is seen above that a $P(\phi)_1$-measure is a Gibbs measure for the given potential $V$. In fact, the
existence of a Gibbs measure $\mu$ follows from the existence of the ground state $\varphi_0$ of the
operator $(-\Delta)^{\alpha/2} + V$. However, it is not clear whether there are any other probability
measures on $(\Omega, \cF)$ satisfying the DLR equations for the potential $V$. This problem will be
discussed in this section.

In the case of the Schr\"odinger operator $(-1/2)\Delta + V$ the case of one-dimensional Ornstein-Uhlenbeck 
process obtained for $V(x) = \frac{1}{2}(x^2 - 1)$ shows that uniqueness need not hold in general (see 
\cite[Ex. 3.1]{bib:BL}). In fact, in this case there are uncountably many Gibbs measures supported on 
$C(\R,\R)$ for this potential.

We start with two lemmas concerning uniqueness, which were proved in \cite{bib:BL} in the case of Gibbs measures
on Brownian motion. The first lemma gives a simple criterion allowing to check if a Gibbs measure is the only
one supported on a given set. Its proof uses the same arguments as the classical one and we omit it. Recall that a
probability measure $P$ is said to be supported on a set $B$ if $P(B) = 1$.

\begin{lemma}
\label{lm:uniq}
Let $\Omega^{*} \subset \Omega$ be measurable and $\nu$ be a Gibbs measure for the potential $V$ such that
$\nu(\Omega^{*})=1$. Suppose that for every $T >0$, $B \in \cF_T$ and $\bar{\omega} \in \Omega^{*}$,
$\nu_N(B,\bar{\omega}) \rightarrow \nu(B)$ as $N \rightarrow \infty$, where $\nu_N(B,\bar{\omega})$ is the
probability kernel defined in \eqref{def:probkern}. Then $\nu$ is the only Gibbs measure for $V$ supported
on $\Omega^{*}$.
\end{lemma}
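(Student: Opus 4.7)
The plan is to exploit the fact that the probability kernel $\nu_N(B, \bar{\omega})$ defined by \eqref{def:probkern} depends only on the potential $V$ (through $Z_N$ and the reference measure $\nu_N^{\bar\omega}$) and not on the specific Gibbs measure one is testing against. Thus, if $\mu$ is any Gibbs measure for $V$ with $\mu(\Omega^{*})=1$, then the kernel appearing in its DLR equations \eqref{DLR} is precisely the same $\nu_N(\:\cdot\:,\bar\omega)$ as for $\nu$. The strategy is then to apply the DLR equations to $\mu$, take $N\to\infty$ using the hypothesized pointwise convergence on $\Omega^{*}$, and conclude $\mu(B)=\nu(B)$ on a generating $\pi$-system.

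First I would fix $T>0$ and $B \in \cF_T$. For every $N>T$ we have $B \in \cF_N \subset \cT_N^c \cup \cF_N$, so the DLR equations for $\mu$ give
\begin{equation*}
\mu(B) \;=\; \int_\Omega \mu(B\mid \cT_N)(\bar\omega)\, d\mu(\bar\omega) \;=\; \int_\Omega \nu_N(B,\bar\omega)\, d\mu(\bar\omega),
\end{equation*}
where in the last step I use that $\mu_N(B,\bar\omega)=\nu_N(B,\bar\omega)$ since the kernel depends on $V$ alone. The integrand satisfies $0 \le \nu_N(B,\bar\omega)\le 1$, and by hypothesis $\nu_N(B,\bar\omega)\to \nu(B)$ as $N\to\infty$ for every $\bar\omega\in\Omega^{*}$; since $\mu(\Omega^{*})=1$, the dominated convergence theorem yields
\begin{equation*}
\mu(B) \;=\; \lim_{N\to\infty}\int_\Omega \nu_N(B,\bar\omega)\, d\mu(\bar\omega) \;=\; \int_\Omega \nu(B)\, d\mu(\bar\omega) \;=\; \nu(B).
\end{equation*}

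Since $T>0$ and $B\in\cF_T$ were arbitrary, $\mu$ and $\nu$ agree on $\bigcup_{T>0}\cF_T$. This family is a $\pi$-system (it is increasing in $T$, hence closed under finite intersections), and the cylinder events it contains generate $\cF=\cF_\R$. Dynkin's $\pi$-$\lambda$ theorem then gives $\mu=\nu$ on all of $\cF$, proving uniqueness.

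The argument is essentially soft, so there is no serious analytic obstacle; the only point requiring a modicum of care is the observation that the kernel $\mu_N$ coincides with $\nu_N$ for every Gibbs measure, since both are built solely from $V$ and the stable bridge measure $\nu_N^{\bar\omega(-N),\bar\omega(N)}$. Once this is noted, the proof collapses to a dominated convergence argument followed by a $\pi$-system extension.
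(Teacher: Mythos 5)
Your proof is correct and is exactly the classical argument that the paper alludes to when it says the proof ``uses the same arguments as the classical one'' and cites the Brownian-motion analogue in \cite{bib:BL}. The key observations — that the specification kernel $\mu_N(\cdot,\bar\omega)$ in \eqref{def:probkern} is built from $V$ and the bridge measure alone and hence is the same for every Gibbs measure, that the DLR equations together with the tower property give $\mu(B)=\int_\Omega \mu_N(B,\bar\omega)\,d\mu(\bar\omega)$, that the pointwise convergence on $\Omega^{*}$ plus $\mu(\Omega^{*})=1$ allows dominated convergence, and that $\bigcup_{T>0}\cF_T$ is a generating $\pi$-system for $\cF$ — are precisely the points the standard proof relies on, so your argument matches the intended one.
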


\noindent

The next lemma characterizes a set of path functions $\bar{\omega} \in \Omega$ for which the convergence
$\mu_N(B,\hat{\omega}) \rightarrow \mu(B)$ holds. A sufficient condition is given in terms of the kernel
$u(t,x,y)$ and the ground state $\varphi_0$.

\begin{lemma}
\label{lm:conv}
Let $(-\Delta)^{\alpha/2} + V$ be a fractional Schr\"odinger operator with Kato-decomposable potential
$V$ and ground state eigenfunction $\varphi_0$. Suppose that for some $\bar{\omega} \in \Omega$
\begin{align}
\label{eq:convcond1}
\frac{u(N-T,\bar{\omega}(-N),x)u(N-T,y,\bar{\omega}(N))}{u(2N,\bar{\omega}(-N),\bar{\omega}(N))}
&\stackrel {N \to \infty} \longrightarrow  e^{2 \lambda_0 T} \varphi_0(x)\varphi_0(y)
\end{align}
holds uniformly in $(x,y) \in \R^d \times \R^d$ for every $T>0$. Then for all $T>0$ and $B \in \cF_T$,
$\mu_N(B,\bar{\omega}) \rightarrow \mu(B)$ as $N \rightarrow \infty$, where $\mu$ is the
$P(\phi)_1$-measure for $V$.
\end{lemma}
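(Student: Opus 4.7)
My plan is to proceed in three steps: decompose the bridge measure using the Markov property, identify the resulting ratio as $f_N$, and then pass to the limit. The main obstacle I anticipate is justifying the interchange of limit and integration, since $u(2T,\cdot,\cdot)$ need not be integrable on $\R^d \times \R^d$; I plan to resolve this using Scheff\'e's theorem together with the fact that the natural normalizations are preserved in the limit.

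First I would fix $T > 0$, $B \in \cF_T$ and $N > T$. Using the Markov-type disintegration of the $\alpha$-stable bridge implicit in \eqref{eq:xmeas1},
\begin{equation*}
\nu^{\bar{\omega}(-N),\bar{\omega}(N)}_{[-N,N]} = \int_{\R^d \times \R^d} dx\, dy \; \nu^{\bar{\omega}(-N),x}_{[-N,-T]} \otimes \nu^{x,y}_{[-T,T]} \otimes \nu^{y,\bar{\omega}(N)}_{[T,N]},
\end{equation*}
together with the facts that $\1_B$ is $\cF_{[-T,T]}$-measurable and that the Feynman-Kac functional factorizes across the three intervals, one applies Lemma~\ref{lm:kernel}~(5) to the two outer integrations to replace them by the kernels $u(N-T,\bar{\omega}(-N),x)$ and $u(N-T,y,\bar{\omega}(N))$. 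This yields
\begin{equation*}
\mu_N(B,\bar{\omega}) = \int_{\R^d \times \R^d} f_N(x,y)\, g(x,y)\, dx\, dy,
\end{equation*}
where $f_N(x,y)$ denotes the ratio on the left-hand side of \eqref{eq:convcond1} and
\begin{equation*}
g(x,y) := \int_{\Omega} \1_B(\omega)\, e^{-\int_{-T}^T V(X_r(\omega))\, dr}\, d\nu^{x,y}_{[-T,T]}(\omega).
\end{equation*}

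Next, I would exploit two normalization identities. Taking $B = \Omega$ in the above display and recalling that $\mu_N(\Omega,\bar{\omega}) = 1$ gives $\int_{\R^d \times \R^d} f_N(x,y)\, u(2T,x,y)\, dx\, dy = 1$. On the other hand, the uniform pointwise limit $f(x,y) := e^{2\lambda_0 T}\varphi_0(x)\varphi_0(y)$ supplied by \eqref{eq:convcond1} satisfies
\begin{equation*}
\int_{\R^d \times \R^d} f(x,y)\, u(2T,x,y)\, dx\, dy = e^{2\lambda_0 T}(\varphi_0, T_{2T}\varphi_0) = 1,
\end{equation*}
thanks to $T_{2T}\varphi_0 = e^{-2\lambda_0 T}\varphi_0$. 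With the non-negative densities $f_N \cdot u(2T,\cdot,\cdot)$ converging pointwise to $f \cdot u(2T,\cdot,\cdot)$ and preserving the integral $1$, Scheff\'e's theorem yields $L^1(\R^d \times \R^d)$-convergence.

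Finally, using the pointwise bound $0 \le g(x,y) \le u(2T,x,y)$,
\begin{equation*}
\left|\mu_N(B,\bar{\omega}) - \int_{\R^d \times \R^d} f(x,y)\, g(x,y)\, dx\, dy\right| \le \int_{\R^d \times \R^d} |f_N - f|\, u(2T,x,y)\, dx\, dy \longrightarrow 0,
\end{equation*}
and the limit integral coincides with $\mu(B)$ by a direct comparison with \eqref{eq:useexp}. The hard part will be precisely this last step: a crude supremum bound $\|f_N - f\|_\infty \int_{\R^d \times \R^d} g\, dx\, dy$ could fail because $\int_{\R^d \times \R^d} u(2T,x,y)\, dx\, dy$ may well be infinite, and the normalization-preserving structure combined with Scheff\'e's theorem is what turns the uniform convergence of $f_N$ into the desired convergence of the integrals.
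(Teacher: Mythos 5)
Your proof is correct, and it takes a genuinely different route from the paper's once the common Markov-property decomposition
$\mu_N(B,\bar\omega)=\int f_N(x,y)\,g(x,y)\,dx\,dy$
is in place. The paper handles the passage to the limit by first restricting to events $B\subset\Omega_M=\{\omega:\max(|\omega(-T)|,|\omega(T)|)<M\}$, so that the inner factor $g$ is bounded with compact support and uniform convergence of $f_N$ trivially suffices, and then treating general $B\in\cF_T$ by an $\varepsilon/4$-approximation using $B_M=B\cap\Omega_M$ and $\mu(\Omega_M^c)<\varepsilon/4$. You instead observe the two normalization identities $\int f_N\,u(2T,\cdot,\cdot)=1=\int f\,u(2T,\cdot,\cdot)$ (the first because $\mu_N(\Omega,\bar\omega)=1$, the second because $T_{2T}\varphi_0=e^{-2\lambda_0 T}\varphi_0$), invoke Scheff\'e's theorem to upgrade the uniform pointwise convergence $f_N\to f$ into $L^1(u(2T,\cdot,\cdot)\,dx\,dy)$-convergence, and then conclude from the domination $0\le g\le u(2T,\cdot,\cdot)$. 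Both arguments must sidestep the same pitfall (that $u(2T,\cdot,\cdot)$ may fail to be integrable on $\R^d\times\R^d$, so a crude supremum bound would be useless), which you identify explicitly. The paper's truncation avoids appealing to Scheff\'e at the price of an extra compactness/approximation step; your version is more compact and makes the role of the conserved normalization visible as the structural reason the limit goes through.
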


\begin{proof}
By the Markov property of the process $\proo X$ and (5) of Lemma \ref{lm:kernel} we have for $N > T$,
$B \in \cF_T$ and $\bar{\omega} \in \Omega$
\begin{equation}
\label{eq:convme}
\begin{split}
\mu_N(B,\bar{\omega})& = \frac{1}{Z_N(\bar{\omega}(-N), \bar{\omega}(N))}
\int_{\R^d} dx \:  \int_{\R^d} dy\:  \left( \int_{\Omega} e^{-\int_{-N}^{-T} V(X_s(\omega))ds}
d\nu^{\bar{\omega}(-N),x}_{[-N,-T)}(\omega) \right.\\
& \qquad \times \int_{\Omega} \textbf{1}_B(\omega) e^{-\int_{-T}^{T} V(X_s(\omega))ds}d\nu^{x,y}_{[-T,T)}(\omega)
\left.\int_{\Omega} e^{-\int_{T}^{N} V(X_s(\omega))ds}d\nu^{y,\bar{\omega}(N)}_{[T,N)}(\omega)\right) \\
& = \int_{\R^d} dx \:  \int_{\R^d} dy\: \frac{u(N-T,\bar{\omega}(-N),x)u(N-T,y,\bar{\omega}(N))}
{u(2N,\bar{\omega}(-N),\bar{\omega}(N))} \\
& \qquad \times \int_{\Omega} \textbf{1}_B(\omega) e^{-\int_{-T}^{T} V(X_s(\omega))ds}d\nu^{x,y}_{[-T,T)}(\omega).
\end{split}
\end{equation}
Put $\Omega_M := \left\{\omega \in \Omega: \max(|\omega(-T)|, |\omega(T)|) < M \right\}$,
$M \in \N$. Clearly, $\Omega_M \nearrow \Omega$ when $M \rightarrow \infty$. If $B \subset \Omega_M$ for
some $M>1$, then the last factor in the above integral is a bounded function of $x$ and $y$ with compact
support and the assertion of the lemma follows from \eqref{eq:convcond1}.

Let now $B \in \cF_T$ be arbitrary. Fix $\varepsilon >0$ and choose $M$ large enough such that
$\mu(\Omega^c_M) < \varepsilon/4$. Since the claim is true for all $\cF_T$-measurable subsets of $\Omega_M$,
in particular for $B_M = B \cap \Omega_M$ and $\Omega_M$, we find $N_0$ such that for all $N > N_0$
$$
|\mu_N(B_M, \bar{\omega}) - \mu(B_M)| < \varepsilon/4 \quad \textnormal{and} \quad
|\mu_N(\Omega_M, \bar{\omega}) - \mu(\Omega_M)| < \varepsilon/4.
$$
This gives $\mu_N(\Omega^c_M, \bar{\omega}) < \varepsilon/2$ for $N > N_0$, and hence
\begin{align*}
|\mu_N(B,\bar{\omega}) - \mu(B)|
& =
|\mu_N(B_M,\bar{\omega}) + \mu_N(B \backslash \Omega_M,\bar{\omega})- \mu(B_M) - \mu(B \backslash \Omega_M)| \\
& \leq
|\mu_N(B_M,\bar{\omega}) - \mu(B_M)| + \mu(\Omega^c_M) + \mu_N(\Omega^c_M, \bar{\omega}) \leq  \varepsilon,
\end{align*}
completing the proof.
\end{proof}

Note that the condition
\begin{align}
\label{eq:convcond2}
\lim_{N \rightarrow \infty} \sup_{(x,y) \in \R^d \times \R^d} \left(\left|\frac{\widetilde{u}(N-T,\bar{\omega}(-N),x)\widetilde{u}(N-T,y,\bar{\omega}(N))}
{\widetilde{u}(2N,\bar{\omega}(-N),\bar{\omega}(N))} - 1\right| e^{2 \lambda_0 T} \varphi_0(x)\varphi_0(y)\right) = 0
\end{align}
is equivalent to \eqref{eq:convcond1}, which will be useful below.

We now discuss uniqueness for potentials $V(x) \rightarrow \infty$ as $|x| \rightarrow \infty$. Our first main
result is the following sufficient condition.

\begin{theorem}[\textbf{Uniqueness on full space}]
\label{th:iugibbs}
Let $\mu$ be the $P(\phi)_1$-measure for the Kato-decomposable potential $V$. If the semigroup $\{T_t: t \geq 0\}$
is AIUC, then $\mu$ is the unique Gibbs measure for $V$ supported on the full space $\Omega$.
\end{theorem}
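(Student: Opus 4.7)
The plan is to combine Lemmas \ref{lm:uniq}, \ref{lm:conv} and \ref{lm:iucons} in a straightforward way, taking $\Omega^{*} = \Omega$. By Lemma \ref{lm:uniq}, it suffices to prove that, for every $T > 0$, every $B \in \cF_T$ and every $\bar\omega \in \Omega$, we have $\mu_N(B,\bar\omega) \to \mu(B)$ as $N \to \infty$. By Lemma \ref{lm:conv} (or its equivalent reformulation \eqref{eq:convcond2}), it is in turn enough to verify that
\begin{equation*}
\frac{u(N-T,\bar\omega(-N),x)\,u(N-T,y,\bar\omega(N))}{u(2N,\bar\omega(-N),\bar\omega(N))}
\;\longrightarrow\; e^{2\lambda_0 T}\varphi_0(x)\varphi_0(y)
\end{equation*}
as $N \to \infty$, uniformly in $(x,y) \in \R^d \times \R^d$, for every fixed $\bar\omega \in \Omega$ and every $T > 0$.

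To do this I would simply rewrite the kernel $u$ via the intrinsic kernel \eqref{intrkernel},
\begin{equation*}
u(t,x,y) = e^{-\lambda_0 t}\,\varphi_0(x)\,\varphi_0(y)\,\widetilde u(t,x,y),
\end{equation*}
and substitute this expression into the numerator and denominator of the ratio above. The prefactors involving $\varphi_0(\bar\omega(\pm N))$ cancel between numerator and denominator, the exponential factors combine into the correct $e^{2\lambda_0 T}$, and one is left with
\begin{equation*}
e^{2\lambda_0 T}\varphi_0(x)\varphi_0(y) \cdot
\frac{\widetilde u(N-T,\bar\omega(-N),x)\,\widetilde u(N-T,y,\bar\omega(N))}
     {\widetilde u(2N,\bar\omega(-N),\bar\omega(N))}.
\end{equation*}

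Now the AIUC hypothesis, through Lemma \ref{lm:iucons}, says precisely that $\widetilde u(t,\cdot,\cdot) \to 1$ uniformly on $\R^d \times \R^d$ as $t \to \infty$. Applying this to each of the three $\widetilde u$-factors above (noting that $N-T \to \infty$ together with $2N \to \infty$, and that the uniformity is in \emph{all} spatial arguments, so that the points $\bar\omega(\pm N)$, $x$, $y$ cause no trouble), the fraction converges to $1$ uniformly in $(x,y)$. This yields \eqref{eq:convcond1}, and the conclusion then follows from the sequence of Lemmas above applied to $\Omega^{*}=\Omega$: any Gibbs measure for $V$ must be supported on $\Omega$ and its local specifications are the same kernels $\mu_N(\,\cdot\,,\bar\omega)$, whose pointwise limits in $\bar\omega$ have just been shown to equal $\mu(\,\cdot\,)$, forcing equality with $\mu$.

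Since AIUC gives essentially by definition what is needed, there is no real obstacle; the only point requiring a moment of care is verifying that the uniform convergence in Lemma \ref{lm:iucons} survives the composition with the three varying spatial arguments $\bar\omega(-N),\bar\omega(N),x,y$, which however is immediate because the convergence is uniform over the whole product space $\R^d \times \R^d$.
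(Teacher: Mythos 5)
Your proof is correct and follows exactly the same route as the paper's: reduce to Lemma \ref{lm:uniq} with $\Omega^*=\Omega$, reduce further to \eqref{eq:convcond1}/\eqref{eq:convcond2} via Lemma \ref{lm:conv}, and conclude by the uniform convergence $\widetilde u(t,\cdot,\cdot)\to 1$ furnished by Lemma \ref{lm:iucons}. The only difference is cosmetic — you spell out the substitution $u=e^{-\lambda_0 t}\varphi_0\varphi_0\,\widetilde u$ and the cancellations explicitly, whereas the paper states directly that Lemma \ref{lm:iucons} yields \eqref{eq:convcond2}.
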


\begin{proof}
Lemma \ref{lm:iucons} implies that condition \eqref{eq:convcond2} is satisfied for every $\omega \in \Omega$.
The assertion of the theorem follows by Lemmas \ref{lm:conv} and \ref{lm:uniq}.
\end{proof}

\begin{corollary}[\textbf{Uniqueness criterion}]
\label{cor:iugibbs}
By using Theorem \ref{th:suff} we immediately conclude that if there exist $R>0$ and $C_{V,R}>0$ such that for
all $|x|>R$
\begin{align}
\label{cond:IU}
\frac{V(x)}{\log|x|} \geq C_{V,R},
\end{align}
holds, then $\mu$ is the unique Gibbs measure for $V$ supported on $\Omega$.
\end{corollary}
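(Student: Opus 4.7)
The plan is to chain together two results already established earlier in the paper, so the proof is essentially a one-line invocation, but let me lay out the logical steps and the single place where one must be a little careful.

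First I would verify that the hypothesis of Corollary \ref{cor:iugibbs} matches exactly the hypothesis of Theorem \ref{th:suff}(1): both ask for constants $R > 1$ and $C_{V,R} > 0$ such that $V(x)/\log|x| \geq C_{V,R}$ for all $|x| > R$. In particular, under this bound $V(x) \to \infty$ as $|x| \to \infty$, so by Lemma \ref{lm:compact} each operator $T_t$ is compact and a ground state $\varphi_0$ exists (Assumption \ref{gs}), giving access to the whole $P(\phi)_1$-framework of Section 5. Theorem \ref{th:suff}(1) then immediately yields that the Feynman--Kac semigroup $\{T_t : t \geq 0\}$ is asymptotically intrinsically ultracontractive.

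Next I would appeal to Theorem \ref{th:iugibbs}, which says that AIUC of $\{T_t : t \geq 0\}$ implies that the $P(\phi)_1$-measure $\mu$ associated with $V$ is the unique Gibbs measure (in the sense of Definition \ref{def:GibbsMeasure}, i.e.\ solving the DLR equations with the reference kernel (\ref{eq:refmeas})--(\ref{def:probkern})) supported on the full space $\Omega = D_{\rm r}(\R,\R^d)$. Combining these two steps gives the conclusion.

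The only subtle point, and what I would call the ``main obstacle'' if there is one, is making sure the hypotheses of Theorem \ref{th:iugibbs} are met in our setting: we must have already produced the $P(\phi)_1$-measure $\mu$ (from Theorem \ref{th:exphi1}), which in turn requires Assumption \ref{gs}. As noted above, the growth condition on $V$ forces $V(x) \to \infty$ and hence, via Lemma \ref{lm:compact} together with the Perron--Frobenius argument following Assumption \ref{gs}, the existence of an isolated simple ground state $\varphi_0$. With this in hand, the construction of $\mu$ and the implication AIUC $\Rightarrow$ uniqueness of $\mu$ on $\Omega$ both apply, and the corollary follows.
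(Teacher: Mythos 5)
Your proof is correct and takes essentially the same route as the paper: the growth condition \eqref{cond:IU} is exactly the hypothesis of Theorem \ref{th:suff}(1), which gives AIUC, and then Theorem \ref{th:iugibbs} gives uniqueness of the Gibbs measure on $\Omega$. The extra care you take in checking that the $P(\phi)_1$-framework is available (compactness of $T_t$ and existence of a ground state via Lemma \ref{lm:compact}) is a sound observation, though in the paper this is already covered by the standing Assumption \ref{gs} and is in any case guaranteed by the first conclusion of Theorem \ref{th:suff}(1).
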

\noindent
Since AIUC depends only on the behaviour of the potential at infinity (cf. Theorem \ref{th:suff}) local
singularities and perturbations on bounded sets have no effect on the uniqueness of the Gibbs measure for
this class of $V$. Recall that we denote by $\Lambda$ the spectral gap of the operator $H_\alpha$.


\begin{theorem}[\textbf{Uniqueness on full measure subspace}]
\label{th:subspace}
Let $V$ be a Kato-decomposable potential and assume $\varphi_0 \in L^2(\R^d) \cap L^1(\R^d)$. Then the
$P(\phi)_1$-measure $\mu$ is the unique Gibbs measure supported on the subspace
$$
\Omega^{*} := \left\{\omega \in \Omega: \lim_{|N| \rightarrow \infty}
\frac{e^{-\Lambda |N|}}{\varphi_0(\omega(N))} = 0\right\}.
$$
\end{theorem}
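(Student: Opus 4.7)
The plan is to combine Theorem \ref{th:subspaceom} (giving that $\mu$ already lives on $\Omega^*$) with the uniqueness criterion of Lemma \ref{lm:uniq}, feeding the latter through Lemma \ref{lm:conv}. The one nontrivial input is that the spectral gap estimate of Lemma \ref{lm:proj} forces the kernel ratio \eqref{eq:convcond2} to vanish on $\Omega^*$.

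First I would show that $\mu(\Omega^*) = 1$. The sequence $a_n = e^{-\Lambda n}$ is positive and summable, so Theorem \ref{th:subspaceom} with this choice of $a_n$ yields $e^{-\Lambda |N|}/\varphi_0(\omega(N)) \to 0$ for $\mu$-a.e.\ $\omega$, which is exactly the condition defining $\Omega^*$.

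Next I would verify the hypothesis \eqref{eq:convcond2} for every $\bar\omega\in\Omega^*$. Writing $\tilde u(t,x,y) = 1 + r(t,x,y)$, Lemma \ref{lm:proj} supplies the bound
\begin{equation*}
|r(t,x,y)| \;\le\; \frac{C_V\, e^{-\Lambda t}}{\varphi_0(x)\varphi_0(y)}, \qquad t>2.
\end{equation*}
Set $r_1 = r(N-T,\bar\omega(-N),x)$, $r_2 = r(N-T,y,\bar\omega(N))$, $r_3 = r(2N,\bar\omega(-N),\bar\omega(N))$. The ratio in \eqref{eq:convcond2} minus one equals $(r_1 + r_2 + r_1 r_2 - r_3)/(1+r_3)$. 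Since $\varphi_0$ is continuous and bounded, I can estimate each term multiplied by the weight $\varphi_0(x)\varphi_0(y)$ using $\varphi_0(x)|r_1| \le C_V e^{-\Lambda(N-T)}/\varphi_0(\bar\omega(-N))$, and similarly for $r_2$ and $r_1 r_2$; for $r_3$ itself one factors the bound as
\begin{equation*}
|r_3| \;\le\; C_V \cdot \frac{e^{-\Lambda N}}{\varphi_0(\bar\omega(-N))}\cdot\frac{e^{-\Lambda N}}{\varphi_0(\bar\omega(N))}.
\end{equation*}
For $\bar\omega \in \Omega^*$ both factors on the right tend to $0$, so $r_3 \to 0$ (hence $1+r_3 \ge 1/2$ for $N$ large) and each of the four terms vanishes uniformly in $(x,y)$. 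This yields \eqref{eq:convcond2}, which is equivalent to the hypothesis \eqref{eq:convcond1} of Lemma \ref{lm:conv}.

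The remaining step is routine: Lemma \ref{lm:conv} then gives $\mu_N(B,\bar\omega) \to \mu(B)$ for every $T>0$, every $B\in\cF_T$, and every $\bar\omega\in\Omega^*$, and Lemma \ref{lm:uniq} concludes that $\mu$ is the only Gibbs measure for $V$ supported on $\Omega^*$. The main conceptual point, and the only place where effort is required, is the uniform control of the weighted ratio in the second step; everything else is bookkeeping once the spectral gap estimate is in hand.
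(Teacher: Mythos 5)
Your proposal is correct and follows the same route as the paper: establish $\mu(\Omega^{*})=1$ via Theorem \ref{th:subspaceom} with $a_n=e^{-\Lambda n}$, use Lemma \ref{lm:proj} to bound $\tilde u-1$ and hence verify \eqref{eq:convcond2} on $\Omega^*$, then conclude by Lemmas \ref{lm:conv} and \ref{lm:uniq}. The only difference is presentational: you spell out the algebra of expanding $(1+r_1)(1+r_2)/(1+r_3)-1$, which the paper leaves implicit.
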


\begin{proof}
By Theorem \ref{th:subspaceom} $\mu(\Omega^{*}) = 1$. It suffices to show that it is the only Gibbs
measure with this property. Lemma \ref{lm:proj} implies that for every $0<t < N$, $N - t \geq 2$,
$$
\sup_{x,y \in \Rd} |e^{\lambda_0 (N-T)} u(N - t,x,y) - \varphi_0(x)\varphi_0(y)| \leq C_{V,t} e^{-\Lambda N}.
$$
Thus for all $\omega \in \Omega^{*}$ and every $x,y \in \Rd$ we clearly get
$$
|\widetilde{u}(N-T,\omega(-N),x)-1|\varphi_0(x) \leq C_{V,T} \frac{e^{-\Lambda N}}{\varphi_0(\omega(-N))}
\rightarrow 0,
$$
$$
|\widetilde{u}(N-T,y,\omega(N))-1|\varphi_0(y) \leq C_{V,T} \frac{e^{-\Lambda N}}{\varphi_0(\omega(N))}
\rightarrow 0,
$$
$$
|\widetilde{u}(2N,\omega(-N),\omega(N))-1| \leq C_{V,T} \frac{e^{-2 \Lambda N}}
{\varphi_0(\omega(-N))\varphi_0(\omega(N))} \rightarrow 0
$$
as $N \rightarrow \infty$, which implies \eqref{eq:convcond2}. It follows from Lemmas \ref{lm:conv} and
\ref{lm:uniq} that $\mu$ is the unique Gibbs measure supported on $\Omega^{*}$.
\end{proof}

We now illustrate the above results by some examples.

\begin{example}
\rm{
Let $H_{\alpha} = (-\Delta)^{\alpha/2} + V$ be a fractional Schr\"odinger operator with potential
$$
V(x) = C_0 |x|^{\delta} + \frac{C_1}{ |x-x_1|^{\beta_1}} - \frac{C_2}{|x - x_2|^{\beta_2}}
$$
where $C_0 >0$, $C_1, C_2 \geq 0$, $x_1, x_2 \in \R^d$ and $\delta > 0$, $\beta_1, \beta_2 \geq 0$.
It is straightforward to check that if $0< \beta_1, \beta_2 < \alpha < d$ or $0< \beta_1, \beta_2 < 1 = d \leq \alpha$, then $V$ is Kato-decomposable.
An immediate consequence of Theorem \ref{th:iugibbs} is that the $P(\phi)_1$-measure $\mu$ is the only
Gibbs measure corresponding to the process $\proo X$ and the potential $V$ supported on $\Omega$. Moreover,
by Theorem \ref{th:subspaceom} and Corollary \ref{cor:subspace1} we obtain that the measure $\mu$ is in fact
supported by the subset of $\Omega$ consisting of all path functions $\omega$ such that for every $\theta>0$
$$
|\omega(N)| = o\left(|N|^\frac{1+\theta}{\delta + d + \alpha}\right).
$$
}
\end{example}

\begin{example}[\textbf{Potential well}]
\rm{
Let $d=1$, $\alpha \in [1,2)$ and
$$
V(x)=
\left\{
\begin{array}{ll} - a, & x \in [-b,b] \\
0,                 & x \in [-b,b]^c ,
\end{array} \right.
$$
where $a, b > 0$. It is proved in \cite[Th. V.1]{bib:CMS} that the operator $H_{\alpha} =
(-\Delta)^{\alpha/2} + V$ has a spectral gap $\Lambda > 0$ and a ground state
$\varphi_0$ corresponding to the eigenvalue $\lambda_0 < 0$. By using Theorems
\ref{th:subspaceom} and \ref{th:decaying} we obtain that the corresponding $P(\phi)_1$-measure
$\mu$ is supported on a subset of paths given by the growth condition
$$
|\omega(N)| = o\left(|N|^{\frac{1+\theta}{1 + \alpha}}\right), \quad \forall{\theta>0}.
$$
Moreover, it follows from Theorem \ref{th:subspace} that $\mu$ is the unique Gibbs measure
supported on the subspace of paths such that
$$
|\omega(N)| = o\left(\exp\left(\frac{\Lambda}{1 + \alpha}|N|\right)\right).
$$
However, we do not know whether on the full space $\Omega$ there exist any other Gibbs measures.
}
\end{example}

\bigskip
\noindent
\textbf{Acknowledgments:} It is a pleasure to thank T. Kulczycki and K. Bogdan for discussions and
valuable comments. JL thanks the hospitality of Wroc{\l}aw University of Technology, and KK thanks
the hospitality of Loughborough University. We both thank IHES, Bures-sur-Yvette, for splendid
hospitality where part of the manuscript has been prepared.

\end{document}